\documentclass[11pt]{amsart}

\usepackage[T1]{fontenc}
\usepackage[utf8]{inputenc}
\usepackage{lmodern}
\usepackage{amssymb,mathtools}
\usepackage{enumitem}
\usepackage[hidelinks]{hyperref}
\usepackage{microtype}
\usepackage{bm}
\usepackage{array}

\setlist[enumerate]{topsep=4pt,itemsep=3pt,parsep=0pt}

\theoremstyle{plain}
\newtheorem{theorem}{Theorem}[section]
\newtheorem{proposition}[theorem]{Proposition}
\newtheorem*{introproposition}{Proposition}
\newtheorem{lemma}[theorem]{Lemma}
\newtheorem{corollary}[theorem]{Corollary}
\theoremstyle{definition}
\newtheorem{definition}[theorem]{Definition}
\newtheorem{example}[theorem]{Example}
\theoremstyle{remark}
\newtheorem*{remark}{Remark}
\numberwithin{equation}{section}
\DeclareMathOperator{\PSH}{PSH}

\DeclareMathOperator{\dist}{dist}
\DeclareMathOperator{\Rea}{Re}
\newcommand{\C}{\mathbb C}
\newcommand{\R}{\mathbb R}
\newcommand{\Om}{\Omega}
\newcommand{\clOm}{\overline\Omega}
\newcommand{\ph}{\varphi}
\newcommand{\eps}{\varepsilon}
\newcommand{\PB}{\operatorname{PB}}
\newcommand{\Pc}{\operatorname{P}^{c}}
\newcommand{\OO}{\mathcal O}
\newcommand{\ddc}{dd^{c}}
\newcommand{\hull}[2]{\widehat{#1}_{#2}}
\newcommand{\cclass}{\mathcal C_{\ph}(\Om)}
\newcommand{\nclass}{\mathcal N_{\ph}(\Om)}

\allowdisplaybreaks[1]
\hypersetup{pdftitle={Complex Dirichlet problem with unbounded boundary values},pdfauthor={Huy Hoang Dao and Quang Dieu Nguyen}}
\title[Complex Dirichlet problem]{Complex Dirichlet problem with unbounded boundary values}

\author{Huy Hoang Dao}
\address{Hanoi National University of Education, 136 Xuan Thuy, Cau Giay, Hanoi, Vietnam}
\email{daohuyhoang05121999@gmail.com}
\author{Quang Dieu Nguyen}
\address{School of Mathematics and Computer Science, Hanoi National University of Education, 136 Xuan Thuy, Cau Giay, Hanoi, Vietnam}
\email{ngquang.dieu@hnue.edu.vn}

\subjclass[2020]{Primary 32U35; Secondary 32U15, 32E20}
\keywords{Perron--Bremermann envelope, superharmonic obstacle, holomorphic hull, pluripolar set, complex Dirichlet problem}

\begin{document}
	\begin{abstract}
		Let $\Omega\subset\mathbb C^n$ be bounded and let
		$\varphi:\overline\Omega\to(-\infty,+\infty]$ be continuous and
		superharmonic on $\Omega$. We study the maximal plurisubharmonic minorant
		$G_{\varphi,\Omega}$ and Dirichlet problems with unbounded boundary data.
		Our main theorem proves interior continuity from a local biholomorphic
		deformation satisfying a one-sided compatibility condition near the infinite
		locus and barriers on the associated boundary cluster set. The deformed
		domains need not be nested, and the proof uses neither a Runge hypothesis nor
		a pseudoconvex ambient neighbourhood. We also prove nonmonotone
		Perron--Bremermann convergence outside an intrinsic neighbourhood pluripolar
		hull, a disk-fibre hull representation, stability under simultaneous
		nonmonotone variations of the domain and obstacle, and uniqueness in a fixed
		obstacle class.
	\end{abstract}
	\maketitle

	\section{Introduction and statements of results}\label{sec:intro}
The Perron--Bremermann method constructs maximal plurisubharmonic
functions from boundary data. For finite continuous data, the classical work of
Bremermann, Walsh, Bedford--Taylor and Sibony gives a well-developed continuity
and Dirichlet theory; see \cite{Br,Walsh,BT,Si2,Kl}. When the data may take the
value $+\infty$, two difficulties appear immediately: uniform barriers may fail
near the infinite locus, and a boundary trace by itself need not determine a
maximal plurisubharmonic function uniquely.

We study this singular regime through the obstacle envelope
\[
G_{\varphi,\Omega}(z)
=\sup\{v(z):v\in\PSH(\Omega),\ v\le\varphi\text{ on }\Omega\},
\]
where $\varphi:\overline\Omega\to(-\infty,+\infty]$ is continuous and
superharmonic on $\Omega$. For $\varphi=-\log|f|$, adding $\log|f|$ recovers the
pluricomplex Green function with poles along the divisor of $f$, so the setting
contains the Green-function problem considered in \cite{DiGreen}.

The present paper extends several aspects of that 2005 framework. Theorem~2.2
of \cite{DiGreen} treats a finite union of hyperplanes through the origin and
uses a homogeneous defining polynomial, scalar dilations, holomorphic
convexity in a pseudoconvex ambient domain, and good boundary points. Here the
homogeneous model is replaced by a general extended-valued superharmonic
obstacle, and scalar dilation is replaced by a local biholomorphic deformation. The domains
$\Phi_t(\Omega)$ need not be nested: they may expand in some directions and
contract in others. More importantly, continuity is proved independently of
holomorphic-hull arguments. For one small parameter $t$, we
transport the envelope, obtain a one-sided estimate only on the overlap with
$\overline\Omega$, cut off the part of the moving boundary that meets the fixed
domain, regularize on the resulting compact piece, and glue by a finite maximum
with a barrier minorant. Thus Theorem~\ref{thm:continuity} requires neither a
Runge pair nor a pseudoconvex ambient neighbourhood; holomorphic convexity enters only in the later hull results, where it is
actually needed.

Approximation requires a separate argument. The example in
\cite[Proposition~3.1]{DieuPB2007} shows on the unit ball that pointwise
convergence of continuous boundary data is not sufficient to force convergence
of the corresponding Perron--Bremermann envelopes. Our nonmonotone result uses
a direct pluripolar perturbation of continuous minorants. It identifies an
intrinsic neighbourhood pluripolar hull of the singular boundary set as the
possible exceptional locus; the complete-pluripolar formulation follows
immediately as a special case. Jensen measures enter only later, as a separate
consequence of the density theorem. The same framework yields stability under
simultaneous nonmonotone variations of the domain and obstacle, and uniqueness
once the competing maximal function is kept below the same obstacle.

These results are complementary to other approaches to unbounded envelopes.
Nilsson--Wikstr\"om \cite{NW} and Nilsson \cite{Nilsson} use quasiboundedness,
Jensen measures and majorant conditions, while Dieu--Long--Hieu
\cite{DLH2026} treat nearly continuous, possibly unbounded data on quasi
$B$-regular domains. Our hypotheses are instead local and geometric, expressed
through a deformation and barriers on its boundary cluster set. No implication
between these frameworks is asserted; in particular, the present paper remains
in $\mathbb C^n$.

	We now fix notation. Let $\PSH(\Omega)$ denote the cone of
	plurisubharmonic functions, allowing the constant function $-\infty$. A
	continuous function with values in $(-\infty,+\infty]$ is always understood
	with the extended-real topology. Thus $\varphi(z)\to+\infty$ at a point
	where $\varphi=+\infty$, and we put $e^{-\infty}\coloneqq0$. Unless an
	extended-real range is specified, $C(X)$ denotes the real-valued continuous
	functions on $X$. Norms on $\mathbb C^n$ are Euclidean, and $dV$ denotes
	Euclidean volume on $\mathbb C^n\simeq\mathbb R^{2n}$.
	
	For $f\in C(\partial\Omega)$, define
	\begin{equation}\label{eq:PB}
		\PB_{f,\Omega}(z)\coloneqq\sup\{v(z):v\in\PSH(\Omega),\ v^*\le f
		\text{ on }\partial\Omega\},\qquad z\in\Omega,
	\end{equation}
	where $v^*(\xi)\coloneqq\limsup_{\Omega\ni z\to\xi}v(z)$. A continuous
	plurisubharmonic barrier at $\xi\in\partial\Omega$ is a function
	$\theta_\xi\in\PSH(\Omega)\cap C(\overline\Omega)$ such that
	$\theta_\xi(\xi)=0$ and $\theta_\xi<0$ on
	$\overline\Omega\setminus\{\xi\}$. We call it a \emph{neighbourhood
		barrier} if it is the restriction of a continuous plurisubharmonic function
	on an open neighbourhood of $\overline\Omega$.
	
	Before introducing the general envelope, it is useful to record a simple obstruction that explains why some restriction on the blow-up set is unavoidable.
	\begin{introproposition}
		Suppose that $u\in\PSH(\Om)\cap L^\infty_{\mathrm{loc}}(\Om)$ is bounded from
		below and $(\ddc u)^n=0$. Put
		\[
		A_u\coloneqq\{\xi\in\partial\Om:\lim_{\Om\ni z\to\xi}u(z)=+\infty\}.
		\]
		No point in the relative interior of $A_u$ admits a continuous
		plurisubharmonic barrier.
	\end{introproposition}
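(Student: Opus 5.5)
We argue by contradiction. Suppose $\xi_0$ lies in the relative interior of $A_u$ in $\partial\Om$ and admits a continuous plurisubharmonic barrier $\theta=\theta_{\xi_0}$. Then there is $r>0$ such that $\partial\Om\cap \overline{B(\xi_0,r)}\subset A_u$. The idea is to use $\theta$ to build a plurisubharmonic competitor $w$ that dominates $u$ near the boundary and agrees with $u$ somewhere inside. Maximality of $u$ then gives a contradiction. Since $(\ddc u)^n=0$ and $u$ is locally bounded, $u$ is maximal in $\Om$: for every relatively compact open $D\Subset\Om$ and every $v\in\PSH(D)$ with $\limsup v\le u$ on $\partial D$ we have $v\le u$ on $D$. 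This is the Bedford--Taylor comparison principle, which we take as known.

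First we normalize. Adding a constant, we may assume $u\ge 0$. Fix $0<\rho<r$. On the compact set $K=\overline\Om\setminus B(\xi_0,\rho)$ the barrier is continuous and strictly negative, so $\theta\le -\delta$ on $K$ for some $\delta>0$. For $M>0$ put
\[
w_M = M(\theta+\delta/2).
\]
This function is plurisubharmonic and continuous on $\overline\Om$, with $w_M\le -M\delta/2<0\le u$ on $K\cap\Om$ and $w_M(\xi_0)=M\delta/2>0$.

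The comparison domain is $D=\Om\cap B(\xi_0,\rho')$ with $\rho<\rho'<r$, or more carefully $D_\eps=\{z\in\Om\cap B(\xi_0,\rho'): \dist(z,\partial\Om)>\eps\}$, so that it is relatively compact. We check the boundary inequality in two parts.
\begin{enumerate}
\item On $\Om\cap\partial B(\xi_0,\rho')\subset K$ we already have $w_M<u$.
\item On $\partial\Om\cap \overline{B(\xi_0,\rho')}\subset A_u$, the function $u$ tends to $+\infty$ while $w_M$ stays bounded by $M\sup\theta+M\delta/2$. By compactness, for fixed $M$ there is $\eps>0$ such that $u>w_M$ on the part of $\partial D_\eps$ near $\partial\Om$.
\end{enumerate}
Combining these, $w_M\le u$ on $\partial D_\eps$, and maximality gives $w_M\le u$ on $D_\eps$. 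Letting $\eps\to0$ yields $w_M\le u$ on $\Om\cap B(\xi_0,\rho')$, and this holds for every $M$.

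The contradiction now comes from letting $M\to\infty$. Pick a point $z_0\in\Om$ near $\xi_0$. Continuity of $\theta$ at $\xi_0$ gives $\theta(z_0)>-\delta/4$, hence $w_M(z_0)\ge M\delta/4$. As $M\to\infty$ this forces $u(z_0)=+\infty$, contradicting $u\in L^\infty_{\mathrm{loc}}(\Om)$.

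The main point to watch is step 2, the uniform divergence of $u$ along the whole compact piece $\partial\Om\cap\overline{B(\xi_0,\rho')}$. This follows because pointwise limits equal to $+\infty$ at each boundary point give, by a standard compactness (Dini-type) argument, $\inf\{u(z): z\in\Om,\ \dist(z,\partial\Om\cap\overline{B(\xi_0,\rho')})<\eps\}\to\infty$ as $\eps\to0$. Near the corner points on $\partial B(\xi_0,\rho')\cap\partial\Om$ both inequalities in the list above are available, so no gap occurs there. Lower boundedness of $u$ is used only for the normalization $u\ge0$.
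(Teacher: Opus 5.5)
Your proof is correct and is essentially the paper's argument: the paper also normalizes $u\ge0$, gets maximality from Bedford--Taylor, and compares a large multiple of a barrier-based function with $u$, letting the multiple tend to infinity at an interior point near $\xi_0$ to contradict local boundedness. The differences are cosmetic: the paper uses $N\max\{1+M\theta,0\}$, which vanishes near $\partial\Om$ away from $\xi_0$, and compares on all of $\Om$ via Lemma~\ref{lem:comparison}(i), whereas you compare $M(\theta+\delta/2)$ on the exhaustion $D_\eps$ of $\Om\cap B(\xi_0,\rho')$, in effect proving that lemma locally.
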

	In particular, the blow-up set of such a function cannot contain a relatively
	open boundary piece of a $B$-regular domain.
	
	This obstruction suggests keeping the singular boundary set visible throughout the argument. We therefore work with a superharmonic obstacle on the closure and use its plurisubharmonic minorant as the distinguished solution.
	For the rest of the paper, unless a different hypothesis is stated, assume
	\begin{equation}\label{eq:standing}
		\begin{gathered}
			\ph:\clOm\longrightarrow(-\infty,+\infty]\text{ is continuous},\\
			\ph\text{ is superharmonic on }\Om,\qquad
			\ph\not\equiv+\infty\text{ on }\Om.
		\end{gathered}
	\end{equation}
	Superharmonicity is meant in the ordinary real sense. In particular,
	$\ph\in L^1_{\mathrm{loc}}(\Om)$. Compactness gives a finite lower bound.
	Write
	\begin{equation}\label{eq:G}
		\begin{gathered}
			A\coloneqq\{z\in\clOm:\ph(z)=+\infty\},\\
			G_{\ph,\Om}(z)\coloneqq\sup\{v(z):v\in\PSH(\Om),\ v\le\ph\text{ on }\Om\}.
		\end{gathered}
	\end{equation}
	\begin{theorem}\label{thm:basic}
		Under \eqref{eq:standing}, the following conclusions hold.
		\begin{enumerate}[label=\textup{(\roman*)}]
			\item $G_{\ph,\Om}\in\PSH(\Om)\cap L^\infty_{\mathrm{loc}}(\Om)$, it is
			bounded from below, and $(\ddc G_{\ph,\Om})^n=0$ on $\Om$.
			\item If $\ph|_\Om$ is not plurisubharmonic, then $G_{\ph,\Om}<\ph$ on $\Om$.
			\item At every boundary point $\xi$ admitting a continuous plurisubharmonic
			barrier, $G_{\ph,\Om}(z)\to\ph(\xi)$ as $\Om\ni z\to\xi$.
		\end{enumerate}
	\end{theorem}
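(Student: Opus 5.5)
For (i), let $m\coloneqq\min_{\clOm}\ph>-\infty$; this is finite because $\ph$ is continuous with values in $(-\infty,+\infty]$ on the compact set $\clOm$. The constant $m$ is an admissible competitor, so $G\coloneqq G_{\ph,\Om}\ge m$. Upper bound: $\ph\in L^1_{\mathrm{loc}}$ and is superharmonic, so it is finite almost everywhere. Every competitor $v$ is subharmonic with $v\le\ph$. Hence, for a ball $B(a,r)\Subset\Om$ and $|z-a|<r/2$, the sub-mean value inequality over $B(z,r/2)\subset B(a,r)$ gives
\[
v(z)\le \frac{1}{|B(z,r/2)|}\int_{B(z,r/2)}\ph\,dV\le C_{a,r}.
\]
Here $C_{a,r}$ is controlled by $\|\ph-m\|_{L^1(B(a,r))}$ and $m$. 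So $G$ is locally bounded above and $G\in L^\infty_{\mathrm{loc}}$.

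Plurisubharmonicity: the upper regularization $G^*$ is plurisubharmonic. I claim $G^*\le\ph$. At a point $z$ and small $\rho$,
\[
G(w)\le \frac{1}{|B(z,\rho)|}\int_{B(z,\rho)} \ph\,dV \quad\text{for } w \text{ near } z
\]
(taking balls centred at $w$, with a slight shrink). Hence $G^*(z)\le\limsup_{\rho\to0}$ of the ball averages of $\ph$ at $z$, and this limit equals $\ph(z)$ by superharmonicity together with continuity of $\ph$. (Even lower semicontinuity plus the fact that the averages of a superharmonic function decrease to its value suffices.) So $G^*$ is a competitor and $G=G^*$ is plurisubharmonic.

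Maximality: take $B\Subset\Om$ a ball. Solve the Bedford--Taylor Dirichlet problem on $B$ with data $G$, approximating from above by continuous data if needed, and let $\tilde G$ be the resulting Perron modification. Then $\tilde G\ge G$, $(\ddc\tilde G)^n=0$ on $B$, and $\tilde G\in\PSH(\Om)$. To see $\tilde G\le\ph$ on $B$: $\tilde G|_B$ is the Perron--Bremermann envelope of boundary values $\le\ph$. Each competitor $u$ on $B$ with $u^*\le G\le\ph$ on $\partial B$ is subharmonic, and $\ph$ is superharmonic, so $u\le\ph$ by the maximum principle. This comparison of a subharmonic and a superharmonic function is valid because $\ph$ is lower semicontinuous, so $\liminf(\ph-u)\ge0$ at $\partial B$. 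Hence $\tilde G$ is a competitor, $\tilde G=G$, and $(\ddc G)^n=0$ on every ball.

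For (ii), suppose $G(z_0)=\ph(z_0)$ at some $z_0\in\Om$. Then $\ph(z_0)<\infty$, and $\ph-G\ge0$ is superharmonic on $\Om$: it is $-$(subharmonic) plus superharmonic, with $G$ locally bounded, so it is well defined. It attains its minimum $0$ at the interior point $z_0$. By the minimum principle it vanishes identically on the component of $\Om$ containing $z_0$. If $\Om$ is not assumed connected, I would argue componentwise, interpreting the hypothesis as ``not plurisubharmonic on the relevant component''; this is a point to check against the paper's convention. On that component $\ph=G$ is plurisubharmonic, which contradicts the hypothesis, at least when $\Om$ is connected.

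For (iii), fix $\xi$ with barrier $\theta_\xi$. For the lower bound with $\ph(\xi)<\infty$: given $\eps>0$, continuity gives a neighbourhood $U$ of $\xi$ on which $\ph>\ph(\xi)-\eps$. Choose $M$ large so that $\ph(\xi)-\eps+M\theta_\xi<m$ on $\clOm\setminus U$. Then
\[
v\coloneqq\max\bigl(\ph(\xi)-\eps+M\theta_\xi,\ m\bigr)
\]
is plurisubharmonic, is at most $\ph$ everywhere, and hence $v\le G$. So $\liminf_{z\to\xi} G(z)\ge\ph(\xi)-\eps$. If $\ph(\xi)=+\infty$, the same argument with an arbitrary level $N$ in place of $\ph(\xi)-\eps$ gives $G\to+\infty$. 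The upper bound is immediate: $G\le\ph$ and $\ph$ is continuous at $\xi$, so $\limsup_{z\to\xi} G(z)\le\ph(\xi)$.

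The main obstacle is the maximality step. The Bedford--Taylor theorem needs continuous data, so I would take continuous $g_j\downarrow G|_{\partial B}$, form $\PB_{g_j,B}$ and pass to the decreasing limit. Then I would check that the glued function is plurisubharmonic on $\Om$ and that the comparison with $\ph$ holds without assuming $\ph$ is finite.
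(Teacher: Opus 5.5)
Your proof is correct, and it differs from the paper's mainly in how you obtain $(\ddc G)^n=0$.

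The paper checks maximality of $G$ directly. Given $D\Subset\Om$ and $v\in\PSH(D)$ with $\limsup_{z\to\xi}(v(z)-G(z))\le0$ on $\partial D$, it uses $G\le\ph$ and the subharmonic maximum principle to get $v\le\ph$ on $D$. It then pastes $\max\{v,G\}$ into $G$ to produce an admissible minorant, concludes $v\le G$, and cites the Bedford--Taylor equivalence between maximality and $(\ddc u)^n=0$ for locally bounded functions.

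You instead use the classical balayage on balls: continuous data $g_j\downarrow G|_{\partial B}$, the Dirichlet solutions $\PB_{g_j,B}$, their decreasing limit, gluing, and the observation that the glued function is again admissible, hence equal to $G$. The checks you flag do go through:
\begin{enumerate}
\item the limit dominates $G$ on $B$;
\item its boundary upper limits are at most $G\le\ph$, because it lies below each continuous $\PB_{g_j,B}$, so both the gluing lemma and the comparison with the lower semicontinuous $\ph$ work, including at points where $\ph=+\infty$;
\item Bedford--Taylor monotone convergence gives the vanishing Monge--Amp\`ere measure.
\end{enumerate}

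Your route never needs the notion of maximality or the maximal $\Leftrightarrow$ $(\ddc u)^n=0$ theorem. It does use solvability of the Dirichlet problem on balls and continuity of $(\ddc\cdot)^n$ along decreasing sequences. The paper's route is shorter, uses only the pasting lemma and the maximum principle in its own steps, and yields maximality of $G$ as a by-product.

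There are two smaller differences. For $G^*\le\ph$, the paper simply combines $G\le\ph$ with upper semicontinuity of $\ph$. Your ball-average argument is longer but would survive for a merely superharmonic obstacle; note that the averages \emph{increase} to $\ph(z)$ as $\rho\downarrow0$, which is all you use. In (iii), your minorant $\max\{\ph(\xi)-\eps+M\theta_\xi,m\}$ lies below $\ph$ pointwise, which avoids the detour through $\PB_{f_j,\Om}$ and Lemma~\ref{lem:comparison}(ii).

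Your argument for (ii) is the paper's, and your connectedness caveat there is apt: the paper's strong-maximum-principle step tacitly treats $\Om$ as a domain.
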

	Theorem~\ref{thm:basic} also explains the connection with pluricomplex
	Green functions. Let $f$ be a nonzero holomorphic function near
	$\overline\Omega$ defining a reduced divisor $A_f$. The Green function
	$G_{A_f,\Omega}$ associated with the divisor has the residual
	representation
	\begin{equation}\label{eq:greenidentity}
		G_{A_f,\Omega}=G_{-\log|f|,\Omega}+\log|f|
		\quad\text{on }\Omega\setminus A_f.
	\end{equation}
	Indeed, adding $\log|f|$ to an admissible minorant for
	$G_{-\log|f|,\Omega}$ gives a Green-function competitor, while
	$G_{A_f,\Omega}-\log|f|$ extends as a locally bounded plurisubharmonic
	function across $A_f\cap\Omega$ and is dominated by $-\log|f|$; see
	\cite[Proposition 2.1]{DiGreen} and \cite{LS}. Thus continuity of the
	residual envelope gives the corresponding extended-real interior
	continuity of the Green function. The erratum \cite{LSE} records a gap in
	the earlier general almost-continuity statement for such Green functions.
	
	The residual identity above also explains why the singular boundary set must be
	tracked together with the geometry of the domain. We now formulate the
	deformation condition used in the continuity theorem.
	\begin{definition}\label{def:deformation}
		A neighbourhood deformation of $\clOm$ is a family $\Phi_t$, $0\le t<t_0$,
		of biholomorphic maps defined on one fixed open neighbourhood $N$ of $\clOm$,
		with $\Phi_0=\mathrm{id}$ and $\Phi_t\to\mathrm{id}$ locally uniformly on $N$.
		The family is continuous in $(t,z)$. For each $z\in\Om$ we also require that
		$\Phi_t^{-1}(z)$ is defined for sufficiently small $t$ and is real analytic
		in $t$ near $0$. The identity curve is allowed.
	\end{definition}
	Write $D_t\coloneqq\Phi_t(\Om)$. The maps are defined near $\clOm$, whereas the
	obstacle is defined only on $\clOm$. No inclusion between distinct deformed
	domains, or between $D_t$ and $\Om$, is assumed.
	
	\begin{definition}\label{def:cluster}
		The boundary cluster set of the deformation is
		\begin{equation}\label{eq:cluster}
			S_\Phi\coloneqq\{\xi:\exists\,t_j\downarrow0,\ 
			y_j\in\clOm\cap\Phi_{t_j}(\partial\Om),\ y_j\to\xi\}.
		\end{equation}
		It is a compact subset of $\partial\Om$ and may be empty.
	\end{definition}
	We use the following classes of finite continuous minorants:
	\begin{align}
		\cclass&\coloneqq\{v\in\PSH(\Om)\cap C(\clOm):v\le\ph\text{ on }\clOm\},
		\label{eq:Cclass}\\
		\nclass&\coloneqq\bigl\{v|_{\clOm}:W\supset\clOm\text{ open},\notag\\
		&\hspace{4.4em}v\in\PSH(W)\cap C(W),\ v\le\ph\text{ on }\clOm\bigr\}.
		\label{eq:Nclass}
	\end{align}
	\begin{theorem}\label{thm:continuity}
		Assume \eqref{eq:standing} and let $\Phi_t$ be a neighbourhood deformation
		as in Definition~\ref{def:deformation}. Write $B\coloneqq A\cap\partial\Om$ and
		$D_t\coloneqq\Phi_t(\Om)$. Assume the following two conditions.
		\begin{enumerate}[label=\textup{(D\arabic*)}]
			\item\label{cond:obstacle}
			Whenever $t_j\downarrow0$ and $x_j\in\Om$ tend to a point of $B$, with
			$y_j\coloneqq\Phi_{t_j}(x_j)\in\clOm\setminus A$, one has
			\begin{equation}\label{eq:compatibility}
				\limsup_{j\to\infty}\bigl(\ph(x_j)-\ph(y_j)\bigr)\le0.
			\end{equation}
			Here $\ph(y_j)$ is finite; if $x_j\in A$, the corresponding difference is
			$+\infty$. Thus no expression $+\infty-(+\infty)$ occurs.
			\item\label{cond:barrier}
			$S_\Phi\cap A=\varnothing$, and every point of $S_\Phi$ admits a continuous
			plurisubharmonic barrier.
		\end{enumerate}
		Then
		\begin{equation}\label{eq:density}
			G_{\ph,\Om}(z)=\sup_{v\in\cclass}v(z),\qquad z\in\Om,
		\end{equation}
		and $G_{\ph,\Om}$ is continuous on $\Om$. Moreover, if
		$f_j\in C(\partial\Om)$ increases pointwise to $\ph$ on the whole boundary,
		then
		\[
		\PB_{f_j,\Om}\uparrow G_{\ph,\Om}\quad\text{on }\Om,
		\]
		and the convergence is locally uniform. If the barriers at the points of
		$S_\Phi$ are neighbourhood barriers, then \eqref{eq:density} remains valid
		with $\nclass$ in place of $\cclass$.
		No inclusion between distinct $D_t$, or between $D_t$ and $\Om$, is
		required. The ambient neighbourhood need not be pseudoconvex or Runge.
	\end{theorem}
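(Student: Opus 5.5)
The proof will follow the strategy announced in the introduction: transport the envelope by a single deformation map, compare on the overlap, cut off near the moving boundary, regularize, and glue with barriers. The aim is to show that for each compact $K\Subset\Om$ and $\eps>0$ there is $v\in\cclass$ with $v\ge G_{\ph,\Om}-\eps$ on $K$. Once this holds, \eqref{eq:density} follows. Continuity of $G:=G_{\ph,\Om}$ then comes from writing it both as a supremum of continuous functions (lower semicontinuity) and as a plurisubharmonic function (upper semicontinuity). The Perron--Bremermann statement comes from Dini's theorem applied to the elements of $\cclass$: for each $v\in\cclass$ minus $\eps$, compactness of $\partial\Om$ and the increase $f_j\uparrow\ph$ eventually give $v-\eps\le f_j$ on $\partial\Om$. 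Hence $\lim \PB_{f_j,\Om}\ge G-\eps$. Conversely, each $\PB_{f_j,\Om}\le G$, since a competitor $w$ with $w^*\le f_j\le\ph$ on $\partial\Om$ satisfies $w\le\PB_{\ph}$-type bounds below the superharmonic obstacle, by a maximum principle against $\ph$. Local uniform convergence then follows from Dini, because the limit $G$ is continuous.

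\textbf{Construction.} Fix $t>0$ small and put $u_t:=G\circ\Phi_t^{-1}$ on $D_t$, which is plurisubharmonic there. On $D_t\cap\Om$ I will establish a one-sided estimate $u_t\le\ph+\eps$ on the portion away from $A$ near $B$. The input is \ref{cond:obstacle}: since $G\le\ph$, we have $u_t(y)=G(x)\le\ph(x)\le\ph(y)+\eps$ when $x$ is near $B$. Away from $B$, $\ph$ is uniformly continuous on compacts where it is finite, and $\Phi_t\to\mathrm{id}$ uniformly, so the same estimate holds. This requires an argument by contradiction with sequences $t_j\downarrow0$, as in \ref{cond:obstacle}. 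Next, the part of $\Phi_t(\partial\Om)$ meeting $\clOm$ lies, for small $t$, in a small neighbourhood $U$ of $S_\Phi$, by the definition of $S_\Phi$ and compactness. Since $S_\Phi\cap A=\varnothing$, $\ph$ is bounded on $U\cap\clOm$.

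\textbf{Gluing.} Using the barriers $\theta_\xi$ at the finitely many points covering $S_\Phi$, I will build a continuous plurisubharmonic $h$ on $\clOm$ with $h\le\ph$, where $h=M\max_i\theta_{\xi_i}+c$ is tuned to lie below $\ph$. The function $u_t-\eps'$ is strictly less than $h$ near $\partial D_t\cap\clOm$, after adding $\delta(|z|^2-R)$ if needed to create a gap. I then regularize $u_t$ by convolution on a compact piece $\{z\in\clOm\cap D_t:\dist(z,\partial D_t)\ge\rho\}$; the upper semicontinuity of $u_t$ and the continuity of $\ph$ keep the $\eps$ margin. The glued function is
\[
v=\max(h,\ u_t*\chi_\rho-2\eps)\ \text{ near the compact piece},\qquad v=h\ \text{ elsewhere}.
\]
It lies in $\cclass$, and $v\ge u_t-3\eps$ on $K$. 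Finally, $u_t\to G$ on $K$ as $t\to0$ in the sense needed, because $G(\Phi_t^{-1}(z))\ge G(z)-\eps$ for small $t$ at each fixed point. This uses real analyticity of $t\mapsto\Phi_t^{-1}(z)$ and upper semicontinuity of $G$ only through a Hartogs-type lemma; I expect some care is needed here, possibly replacing the pointwise limit by a local $L^1$ argument. For neighbourhood barriers the same construction takes place on a neighbourhood of $\clOm$ and gives $\nclass$.

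\textbf{Main obstacle.} The hardest step is making the gluing legitimate at boundary points of $\Om$ where $\partial D_t$ does not meet $\clOm$, especially near $B$, where $\ph=+\infty$ and no barrier is available. There one must check that $u_t$ itself, restricted to $\clOm\cap D_t$, is continuous up to $\partial\Om$ after smoothing and stays below $\ph$, with no cutoff needed. I would first try to show that near $B$ one has $\clOm\subset D_t$ locally for small $t$. This should follow from $S_\Phi\cap A=\varnothing$, so that smoothing on $\clOm$ there only sees values of $u_t$ on $D_t$, while \ref{cond:obstacle} supplies the upper bound.
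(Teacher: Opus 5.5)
\textbf{Gap 1: the lower bound on $u_t(z_0)$.} Your outline has the same architecture as the paper's proof: transport $u_t=G\circ\Phi_t^{-1}$, a one-sided estimate on the overlap, a compact cut, regularization, finite-maximum gluing with a barrier minorant, and a compact-cover argument for increasing data. But the step that supplies the \emph{lower} bound is missing, and in the form you state it, it is not available. You need one admissible $t$ with $u_t(z_0)>G(z_0)-\eps$.

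Upper semicontinuity and Hartogs' lemma only give $\limsup_{t\downarrow0}u_t(z_0)\le G(z_0)$, which is the wrong direction. The assertion $G(\Phi_t^{-1}(z_0))\ge G(z_0)-\eps$ for \emph{all} small $t$ cannot be proved before continuity is known. In one variable, $u=\max\{\sum_k c_k\log|z-1/k|,-1\}$ with $c_k>0$ and $\sum_k c_k\log k<1/2$ is bounded and subharmonic near $0$, satisfies $u(0)>-1/2$, and equals $-1$ at every point $1/k$ of the real segment. An $L^1_{\mathrm{loc}}$ argument gives $\limsup_j u_{t_j}=G$ at best outside a pluripolar set. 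That yields \eqref{eq:density} only off that set, which gives neither the identity everywhere nor continuity.

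The missing idea is that a limsup suffices and holds at every point. Composing $G$ with the holomorphic complexification of $t\mapsto\Phi_t^{-1}(z_0)$ gives a bounded subharmonic function on a small disc. The real segment $(0,\delta)$ is not thin at its endpoint, so $G(z_0)=\limsup_{t\downarrow0}u_t(z_0)$ (Lemma~\ref{lem:regularization}(ii), Lemma~\ref{lem:deformedbound}(ii)). This is exactly where the real analyticity in Definition~\ref{def:deformation} is used.

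It also forces the target to be pointwise. For each $z_0$ you fix one $t$, depending on $z_0$, with $z_0\in D_t$, $\partial D_t\cap\clOm\subset W$, $u_t\le\ph+\eps$ on $D_t\cap\clOm$, and $u_t(z_0)>G(z_0)-\eps$. Your version, uniform on a compact $K$ with a single $t$, is not obtainable this way. Uniform approximation on compacts comes only afterwards, from continuity of $G$ and finite maxima.

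\textbf{Gap 2: the ``main obstacle'' and smaller points.} Your declared main obstacle is not one. The remedy you propose, that $\clOm\subset D_t$ locally near $B$, is neither derived from (D2) nor needed.
\begin{enumerate}
\item Take $u_S\in\cclass$ with $u_S>\ph-\eps$ on a relative neighbourhood $W$ of $S_\Phi$ whose closure misses $A\cup\{z_0\}$. This requires a finite maximum of functions $c_\xi+M_\xi\theta_\xi$ with point-dependent levels $c_\xi=\ph(\xi)-\eps/4$, not one constant $c$.
\item Once $\partial D_t\cap\clOm\subset W$, the set $(\clOm\setminus W)\cap D_t$ is compact in $D_t$. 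Any open $D$ with $(\clOm\setminus W)\cap D_t\subset D\Subset D_t$ then satisfies $\partial D\cap\clOm\subset W$ (Lemma~\ref{lem:compactcuts}). Your distance cut does the same if $\rho<\dist(\partial D_t,\clOm\setminus W)$, which is positive for fixed $t$.
\item The convolution lives on a neighbourhood of $\overline D$ inside the open set $D_t$. So points of $\partial\Om$, including points of $B$, that lie in $D_t$ are interior points for the smoothing, and points of $\clOm$ outside $\overline D$ simply receive $u_S$. The only seam is $\partial D\cap\clOm\subset W$, where $\widetilde u-3\eps\le\ph-\eps<u_S$. No quadratic correction and no boundary continuity of $u_t$ are needed.
\item What is needed is the overlap estimate on all of $D_t\cap\clOm$, not only on $D_t\cap\Om$ away from $A$, because the Dini-type bound $\widetilde u\le\ph+2\eps$ is taken on the compact set $\overline D\cap\clOm$. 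The contradiction argument must therefore also treat limits in $A\cap\Om$, where local boundedness of $G$ contradicts $\ph(y_j)\to+\infty$.
\item Dini's theorem cannot be applied to $\PB_{f_j,\Om}$ directly, since these envelopes are not known to be lower semicontinuous. Instead take a finite maximum $v\in\cclass$ with $v>G-\delta$ on the compact set, and use $v-\delta\le f_j$ on $\partial\Om$ for large $j$.
\end{enumerate}
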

	The proof of Theorem~\ref{thm:continuity} rests on two geometric-analytic
	ingredients. First, the transported envelope $G_{\ph,\Om}\circ\Phi_t^{-1}$
	satisfies a uniform one-sided estimate on $D_t\cap\clOm$; the obstacle is
	never evaluated outside the fixed closure. Second, Lemma~\ref{lem:compactcuts}
	produces a compact cut whose artificial boundary lies in the barrier region,
	so a regularization of the transported envelope can be glued to a barrier
	minorant by a finite maximum. Once the density formula \eqref{eq:density} is
	available, nonmonotone boundary approximation is obtained separately by a
	pluripolar perturbation. Lemma~\ref{lem:radiuscriterion} reformulates the
	one-sided compatibility condition in terms of $e^{-\ph}$ and includes the zero
	fibres of logarithmic obstacles. In the homogeneous hyperplane situation of
	\cite{DiGreen}, the corresponding control is supplied by the identity
	$f(tz)=t^d f(z)$ and by scalar dilations. Condition (D1) isolates the part of
	that calculation which is actually needed for continuity and makes it
	available for nonhomogeneous obstacles and nonradial deformations.
	
	Once neighbourhood minorants are available, the same approximation mechanism can be combined with the classical disk-fibre construction. We now return to holomorphic convexity and obtain a precise hull representation.
	For a compact set $K$ in a pseudoconvex domain $U$, let
	\[
	\hull K U\coloneqq\{z\in U:|h(z)|\le\max_K|h|\text{ for every }h\in\OO(U)\}.
	\]
	We say that $\clOm$ is $\OO(U)$-convex if
	$\hull{\clOm}U=\clOm$.
	\begin{theorem}\label{thm:hull}
		Assume the hypotheses of Theorem~\ref{thm:continuity}. Suppose, in addition, that $U$ is a
		pseudoconvex domain containing $\clOm$, that $\clOm$ is $\OO(U)$-convex, and
		that every point of $S_\Phi$ admits a neighbourhood barrier. Set
		\begin{align*}
			K&\coloneqq\{(z,w):z\in\clOm,\ |w|\le e^{-\ph(z)}\},\\
			K_\partial&\coloneqq\{(z,w):z\in\partial\Om,\ |w|\le e^{-\ph(z)}\}.
		\end{align*}
		Then
		\begin{equation}\label{eq:hullidentity}
			\hull K{U\times\C}\cap(\Om\times\C)
			=\{(z,w):z\in\Om,\ |w|\le e^{-G_{\ph,\Om}(z)}\},
		\end{equation}
		and
		\begin{equation}\label{eq:fullhullequality}
			\hull K{U\times\C}=\hull{K_\partial}{U\times\C}.
		\end{equation}
	\end{theorem}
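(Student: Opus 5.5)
The plan is to prove the two inclusions in \eqref{eq:hullidentity} separately, and then derive \eqref{eq:fullhullequality} from it.

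\emph{Inclusion $\supseteq$.} Take $(z_0,w_0)$ with $z_0\in\Om$ and $|w_0|\le e^{-G_{\ph,\Om}(z_0)}$. Since every point of $S_\Phi$ admits a neighbourhood barrier, Theorem~\ref{thm:continuity} gives $G_{\ph,\Om}=\sup_{v\in\nclass}v$ on $\Om$. It therefore suffices to show: if $v\in\nclass$ and $|w_0|<e^{-v(z_0)}$ up to an arbitrarily small error, then $(z_0,w_0)\in\hull K{U\times\C}$. Fix $h\in\OO(U\times\C)$ with $\max_K|h|\le1$. Expand $h(z,w)=\sum_k a_k(z)w^k$ on the relevant bounded region. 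Since $\clOm$ is $\OO(U)$-convex, $v$ can be approximated on $\clOm$ by functions of the form $\max_j\frac1{m_j}\log|g_j|$ with $g_j\in\OO(U)$; this is the classical Siciak/Gamelin representation of continuous psh functions near an $\OO(U)$-convex compact. So we may replace $v$ by such a maximum $\tilde v\le v+\eps$.

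For each $j$, consider the analytic disk $\zeta\mapsto(z,\zeta)$ and apply the function $\log|h(z,\zeta g_j(z)^{1/m_j}\cdots)|$. It is cleaner to use the plurisubharmonic function
\[
u(z)=\sup_{|\zeta|\le e^{-v(z)}}\log|h(z,\zeta)|=\log\max_{|w|=e^{-v(z)}}|h(z,w)|.
\]
This function is psh in $z$ because it equals $\sup_{|\theta|=1}\log|h(z,\theta e^{-v(z)})|$, and $z\mapsto(z,\theta e^{-v(z)})$ is a standard Hartogs-type construction. Then $u\le0$ on $\clOm$ (since $v\le\ph$ gives $e^{-v}\ge e^{-\ph}$, so we apply Hadamard's three-circle/log-convexity argument together with the maximum principle in $w$). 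The hull is then reached by a Kiselman/Hartogs-domain argument: $\{(z,w):|w|<e^{-v(z)}\}$ over a neighbourhood of $\clOm$ is Hartogs pseudoconvex-compatible, and polynomial-in-$w$ truncations combined with Oka--Weil show $|h(z_0,w_0)|\le1$.

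\emph{Inclusion $\subseteq$.} Let $(z_0,w_0)\in\hull K{U\times\C}$ with $z_0\in\Om$. We use the disk-fibre (Poletsky/Duval--Sibony) description: for $\OO(U\times\C)$-hulls, the function
\[
\psi(z)=-\log\sup\{|w|:(z,w)\in\hull K{U\times\C}\}
\]
has $-\psi$ defining a pseudoconcave fibre radius. Consequently $\psi$ is psh on the interior of the projection, which contains $\Om$ because $\hull{\clOm}U=\clOm$. Moreover $\psi\le\ph$ on $\Om$, since $K\subset\hull K{}$. Thus $\psi\le G_{\ph,\Om}$, which gives $|w_0|\le e^{-\psi(z_0)}\le e^{-G_{\ph,\Om}(z_0)}$. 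The delicate part is showing that $\psi$ is psh and not merely that $\log$ of the fibre radius is superharmonic-like. For this we use Słodkowski's theorem that hulls of compact sets with disk fibres have psh $-\log$ radius over the interior of the base, for $\OO$-convex hulls in pseudoconvex domains.

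\emph{Equation \eqref{eq:fullhullequality}.} The inclusion $\supseteq$ is trivial. For $\subseteq$, note that $K\cap(\partial\Om\times\C)=K_\partial$. Over $\Om$, run the $\supseteq$ argument above with the maximum principle taken only over $\partial\Om$. The function $u(z)$ built from $h$ is psh on $\Om$, and it is bounded above by $0$ near $\partial\Om$ because $u$ is continuous up to the boundary where $v$ is finite. Hence $u\le0$ on $\Om$, so every fibre over $\Om$ of $\hull K{}$ lies in $\hull{K_\partial}{}$. The projection of $\hull{K_\partial}{}$ lies in $\clOm$, and over $\partial\Om$ the fibres agree by the identity \eqref{eq:hullidentity} and its limit.

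The main obstacle is the $\supseteq$ step: passing from a continuous psh minorant defined only near $\clOm$ to genuine holomorphic functions on $U\times\C$. This is where $\OO(U)$-convexity and the neighbourhood class $\nclass$ are indispensable.
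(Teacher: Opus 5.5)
\textbf{There is a genuine gap: each of your two inclusions is argued with the mechanism that actually proves the other one, and both fail as written.}

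\textbf{The inclusion $\supseteq$.} Every $v\in\nclass$ satisfies $v\le G_{\ph,\Om}$. So the condition $|w_0|\le e^{-v(z_0)}$ holds for \emph{all} admissible $v$, including constants $v\equiv m\le\inf_{\clOm}\ph$. The statement you reduce to would therefore place $\Om\times\{|w|<e^{-m}\}$ in the hull. This is false in general: for $\ph=\Rea z_1$, the test function $e^{z_1}w$ bounds the fibres of $\hull K{U\times\C}$ by $e^{-\Rea z_1}$.

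Your auxiliary function $u(z)=\log\max_{|w|=e^{-v(z)}}|h(z,w)|$ is not plurisubharmonic, because $z\mapsto(z,\theta e^{-v(z)})$ is not holomorphic; for $h(z,w)=w$ one gets $u=-v$. The claim $u\le0$ on $\clOm$ also fails. Since $e^{-v}\ge e^{-\ph}$, the circle $|w|=e^{-v(z)}$ lies \emph{outside} the fibre of $K$, which is the only place where $|h|\le1$ is known.

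This inclusion needs neither minorants nor density. Write $h=\sum_m h_m(z)w^m$. Cauchy's estimate on the fibre disks of $K$ gives $m^{-1}\log|h_m|\le\ph$ on $\clOm$, hence $m^{-1}\log|h_m|\le G_{\ph,\Om}$ by the definition of the envelope. The geometric series and the powers trick $h\mapsto h^N$ then give $|h(z_0,w_0)|\le1$; this is Lemma~\ref{lem:Cauchyhull}. The paper runs the boundary version of this argument (with $f_j=\min\{\ph,j\}$, Lemma~\ref{lem:Cauchyhull}(i) and Lemma~\ref{lem:hullcompactness}) to put the tube inside $\hull{K_\partial}{U\times\C}$. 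Together with $G_{\ph,\Om}\le\ph$, this gives $K\subset\hull{K_\partial}{U\times\C}$ and hence \eqref{eq:fullhullequality}. Your treatment of \eqref{eq:fullhullequality} rests on the invalid $u$ again.

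\textbf{The inclusion $\subseteq$.} There is a sign error. Even granting that $\psi$ is plurisubharmonic, $\psi\le\ph$ yields $\psi\le G_{\ph,\Om}$, i.e.\ $e^{-\psi}\ge e^{-G_{\ph,\Om}}$. So the hull fibres are at least as large as the tube fibres, which is the $\supseteq$ inclusion again. The appeal to S{\l}odkowski is also off target: such results concern the logarithm of the fibre radius, i.e.\ $-\psi$. Here $-G_{\ph,\Om}$ is typically not plurisubharmonic; see Example~\ref{ex:nonnested}.

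The upper bound on the hull is where $\nclass$ and $\OO(U)$-convexity are needed. The projection of $\hull K{U\times\C}$ lies in $\clOm$. For $v\in\nclass$, the function $\log|w|+v(z)$ is plurisubharmonic near the hull and $\le0$ on $K$, hence $\le0$ on the hull by Lemma~\ref{lem:localhull}. Your Siciak-type approximation of $v$, uniformly within $\eps$ on $\clOm$, by $\max_j m_j^{-1}\log|g_j|$ would serve equally well here: test $(z_0,w_0)\in\hull K{U\times\C}$ against $g_j(z)w^{m_j}$. Taking the supremum over $v$ and using the $\nclass$-density from Theorem~\ref{thm:continuity} then gives $|w_0|\le e^{-G_{\ph,\Om}(z_0)}$.
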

	The inclusion from the Perron envelope into the hull is the
	coefficient-and-powers argument of \cite[Lemmas 2.5--2.6]{DiGreen}. In the
	2005 paper the hull construction is part of the engine used to prove
	continuity. Here the logical direction is reversed: Theorem~\ref{thm:continuity}
	first produces neighbourhood minorants by local deformation and gluing, and
	those minorants then give the reverse hull inclusion. This keeps the strongest
	part of the Green-function idea in \cite{DiGreen} while avoiding any
	holomorphic-convexity assertion for a connected component selected from an
	intersection of moving domains.
	
	The continuity and hull theorems already give concrete boundary-value consequences. A first family comes from logarithmic singularities generated by homogeneous linear forms.
	\begin{corollary}\label{cor:homogeneous}
		Let $\Om$ be a bounded strictly convex domain containing $0$. For each
		$1\le i\le k$, let $f_{i,1},\ldots,f_{i,m_i}$ be homogeneous linear
		polynomials, with at least one $f_{i,j}$ nonzero, and let $\alpha_i>0$. Put
		\[
		\Psi_0(z)\coloneqq-\sum_{i=1}^k
		\log\left(\sum_{j=1}^{m_i}|f_{i,j}(z)|^{\alpha_i}\right).
		\]
		For every real-valued $h\in C(\partial\Om)$ there exists
		$u\in\PSH(\Om)\cap C(\Om)$, bounded from below, such that
		\[
		(\ddc u)^n=0,\qquad
		\lim_{\Om\ni z\to\xi}u(z)=\Psi_0(\xi)+h(\xi)
		\quad(\xi\in\partial\Om).
		\]
		More precisely, $u=G_{\Psi_0+H,\Om}$, where $H$ is the harmonic extension of
		$h$ to $\Om$, continuous on $\clOm$.
	\end{corollary}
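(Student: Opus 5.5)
We set $\ph\coloneqq\Psi_0+H$ and first check \eqref{eq:standing}. Each $\log\bigl(\sum_j|f_{i,j}|^{\alpha_i}\bigr)$ is plurisubharmonic, because $\sum_j|f_{i,j}|^{\alpha_i}$ is a sum of log-plurisubharmonic functions and hence log-plurisubharmonic. So $\Psi_0$ is plurisuperharmonic and in particular superharmonic. Its values lie in $(-\infty,+\infty]$ on $\clOm$, since $\Om$ is bounded. It is continuous in the extended sense and equals $+\infty$ exactly on the cone $A_0=\bigcup_i\{f_{i,1}=\dots=f_{i,m_i}=0\}$, which is a finite union of proper linear subspaces. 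Adding the harmonic function $H\in C(\clOm)$ preserves all of this, and $A=A_0\cap\clOm$.

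Once Theorem~\ref{thm:continuity} is available, Theorem~\ref{thm:basic} gives that $G_{\ph,\Om}$ is plurisubharmonic, bounded below and maximal. Since strictly convex domains are $B$-regular, every boundary point has a continuous plurisubharmonic barrier, and Theorem~\ref{thm:basic}(iii) then yields the boundary limit $\ph(\xi)=\Psi_0(\xi)+h(\xi)$, including the value $+\infty$ on $A\cap\partial\Om$. Interior continuity comes from Theorem~\ref{thm:continuity}, so the real work is to build a deformation satisfying (D1) and (D2).

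For the deformation we take the scalar dilations $\Phi_t(z)=(1+t)z$. The natural alternative is contraction, $\Phi_t(z)=(1-t)z$, and the direction has to be decided by (D2); I expect this choice to be the main obstacle.

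With contraction, $\Phi_t(\partial\Om)$ lies inside $\Om$ by strict convexity and $0\in\Om$, so the cluster set is $S_\Phi=\partial\Om$. This meets $A$ whenever a subspace in $A_0$ reaches the boundary, and (D2) fails.

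With expansion, $y\in\clOm\cap(1+t)\partial\Om$ means $y=(1+t)x$ with $x\in\partial\Om$ and $(1+t)x\in\clOm$. For a convex domain containing $0$ this is impossible, because the ray from $0$ leaves $\clOm$ after $\partial\Om$. Hence $S_\Phi=\varnothing$ and (D2) holds vacuously. The remaining requirements of Definition~\ref{def:deformation} are clear: the maps are defined on a neighbourhood $N$, they tend locally uniformly to the identity, and $\Phi_t^{-1}(z)=z/(1+t)$ is real analytic in $t$.

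It remains to verify (D1). Take $x_j\to\xi\in B$ and $y_j=(1+t_j)x_j\in\clOm\setminus A$. Since $A_0$ is a cone, $x_j\notin A$. By homogeneity of each $f_{i,j}$,
\[
\Psi_0(y_j)=\Psi_0(x_j)-\Bigl(\sum_i\alpha_i\Bigr)\log(1+t_j).
\]
Therefore
\[
\ph(x_j)-\ph(y_j)=\Bigl(\sum_i\alpha_i\Bigr)\log(1+t_j)+H(x_j)-H(y_j).
\]
The first term tends to $0$. For the second, $x_j$ and $y_j$ both converge to $\xi$ and lie in $\clOm$, so $H(x_j)-H(y_j)\to0$ by uniform continuity of $H$ on $\clOm$. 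The $\limsup$ is therefore $0$, and (D1) holds.

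Theorem~\ref{thm:continuity} then gives continuity of $u=G_{\ph,\Om}$ on $\Om$, and the corollary follows. The only delicate points are the sign of the deformation, which expansion settles because it makes $S_\Phi$ empty, and making sure the compatibility estimate is evaluated only at points of $\clOm$. The latter is exactly why we need $H$ continuous up to $\clOm$, which holds because strictly convex domains are Dirichlet regular.
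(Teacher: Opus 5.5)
Your proposal is correct and follows the paper's proof essentially step by step. The expansion $\Phi_t(z)=(1+t)z$ makes $S_\Phi$ empty by convexity, homogeneity reduces (D1) to $d\log(1+t)+H(x)-H((1+t)x)\to0$ using continuity of $H$ on $\clOm$ (the cone invariance handles the zero fibres), and Theorem~\ref{thm:basic} supplies the boundary limits and $(\ddc u)^n=0$. The only cosmetic difference is the barrier: the paper writes it explicitly as the real part of a supporting complex affine functional minus its maximum, whereas you cite $B$-regularity of strictly convex domains.
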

	Here the continuous correction $h$ is finite-valued; this is used in the construction of the superharmonic obstacle.
	
	\begin{corollary}\label{cor:minimal}
		Assume the hypotheses of Theorem~\ref{thm:hull}, let $m\coloneqq\min_{\partial\Om}\ph<+\infty$, and put
		$T\coloneqq\{\xi\in\partial\Om:\ph(\xi)=m\}$. Then
		\[
		\{z\in\Om:G_{\ph,\Om}(z)=m\}=\Om\cap\hull T U.
		\]
	\end{corollary}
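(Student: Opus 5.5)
The plan is to derive Corollary~\ref{cor:minimal} from the hull identity \eqref{eq:hullidentity} of Theorem~\ref{thm:hull}, together with the trivial bound $G_{\ph,\Om}\ge m$. That bound holds because the constant $m$ is plurisubharmonic and satisfies $m\le\ph$ on $\clOm$. Indeed, $\ph$ is superharmonic on $\Om$ and continuous on $\clOm$, so the minimum principle gives $\ph\ge\min_{\partial\Om}\ph=m$ on $\Om$. The point is that $\{G_{\ph,\Om}=m\}$ is where the fibre disc $|w|\le e^{-G}$ is as large as possible, namely of radius $e^{-m}$. Since $e^{-\ph}\le e^{-m}$ on $K$, the maximal-radius fibres should be generated exactly by the points of $T$.

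For the inclusion $\Om\cap\hull T U\subset\{G=m\}$, take $z_0\in\Om\cap\hull TU$. Then $(z_0,e^{-m})$ lies in $\hull{T\times\{|w|\le e^{-m}\}}{U\times\C}$, because for $h\in\OO(U\times\C)$ the function $\zeta\mapsto\max_{|w|\le e^{-m}}|h(\zeta,w)|$ is plurisubharmonic. More precisely, $\log$ of the fibre maximum is a plurisubharmonic function on $U$, and hulls in the pseudoconvex domain $U$ coincide with plurisubharmonic hulls. Since $T\times\{|w|\le e^{-m}\}\subset K$, we get $(z_0,e^{-m})\in\hull K{U\times\C}$. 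Now \eqref{eq:hullidentity} gives $e^{-m}\le e^{-G(z_0)}$, that is $G(z_0)\le m$, and hence $G(z_0)=m$. Alternatively, one can argue directly: any $\psi\in\PSH(U)$ with $\psi\le0$ on $T$ satisfies $\psi(z_0)\le0$.

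For the reverse inclusion, suppose $G(z_0)=m$ but $z_0\notin\hull TU$. Choose $h\in\OO(U)$ with $|h(z_0)|>1\ge\max_T|h|$, and set $\psi=\log|h|$. By compactness, on $\partial\Om$ near $T$ we have $\psi<\log|h(z_0)|-\delta$. Off a neighbourhood of $T$ in $\partial\Om$ we have $\ph\ge m+\eta$ for some $\eta>0$. Consider the competitor $v=m+\eps(\psi-\log|h(z_0)|+\delta)$ with $\eps$ small. Its boundary values are $v\le m$ near $T$, and $v\le m+\eta$ elsewhere once $\eps$ is small. Hence $v\le\ph$ on $\partial\Om$, and therefore on $\Om$. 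Here I would use the density formula \eqref{eq:density}, or the fact that $\ph$ is superharmonic while $v$ is subharmonic and continuous on $\clOm$: the comparison principle applies because $\ph-v$ is superharmonic, lower semicontinuous up to the boundary, and $\ge0$ there. Then $G(z_0)\ge v(z_0)=m+\eps\delta>m$, a contradiction.

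The main obstacle is the first inclusion. One must check that the polynomial-hull-type statement for $\hull TU$ (defined via $\OO(U)$) transfers to the product hull in $U\times\C$, since the plurisubharmonic function $\log\max_{|w|\le r}|h(\cdot,w)|$ is not itself $\log|g|$ for a holomorphic $g$. I would handle this via the standard equality of holomorphic and plurisubharmonic hulls in pseudoconvex domains, or by expanding $h$ in powers of $w$ (the coefficient argument of \cite{DiGreen}): $h=\sum a_k(z)w^k$, with Cauchy estimates applied coefficientwise, each coefficient $a_k\in\OO(U)$ being bounded at $z_0$ by its maximum on $T$.
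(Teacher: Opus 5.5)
Your argument is correct. For $\Om\cap\hull TU\subset\{G_{\ph,\Om}=m\}$ you follow the paper. You put $(z_0,e^{-m})$ into $\hull K{U\times\C}$, read off $G_{\ph,\Om}(z_0)\le m$ from \eqref{eq:hullidentity}, and get $G_{\ph,\Om}\ge m$ from the constant competitor. The step you flag as the main obstacle is in fact immediate. For $H\in\OO(U\times\C)$ the slice $\zeta\mapsto H(\zeta,e^{-m})$ belongs to $\OO(U)$, so $(z_0,e^{-m})\in\hull{T\times\{e^{-m}\}}{U\times\C}\subset\hull K{U\times\C}$ straight from the definition. This is the paper's argument, and it needs neither the equality of holomorphic and plurisubharmonic hulls nor a coefficient expansion. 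Your ``direct'' alternative is incomplete as stated, since $G_{\ph,\Om}$ is not plurisubharmonic on $U$. To make it work, you would apply Lemma~\ref{lem:localhull} to the functions in $\nclass$, which are plurisubharmonic near $\hull TU\subset\clOm$, and then use the $\nclass$ form of \eqref{eq:density}.

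For $\{G_{\ph,\Om}=m\}\subset\Om\cap\hull TU$ your route is genuinely different. The paper stays inside the hull machinery. Theorem~\ref{thm:hull} places $(z_0,e^{-m})$ in $\hull{K_\partial}{U\times\C}$; this goes through \eqref{eq:tubeinhull} and hence through the monotone approximation of Theorem~\ref{thm:continuity}. The peak polynomial $f_R(w)=(1+w/R)/2$, with $R=e^{-m}$, then shows that the slice of this hull at $w=R$ is exactly $\hull TU\times\{R\}$. You instead take a separating $h\in\OO(U)$ and the explicit competitor $v=m+\eps(\log|h|-\log|h(z_0)|+\delta)$. Your check that $v\le\ph$ on $\partial\Om$ is right. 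The inequality $v\le\ph$ on $\Om$ then follows from the minimum principle for the superharmonic, lower semicontinuous function $\ph-v$ on $\clOm$; the density formula plays no role at that step. This is shorter, and it shows that this inclusion holds under \eqref{eq:standing} alone, for any open $U\supset\clOm$. No deformation, barrier, pseudoconvexity or $\OO(U)$-convexity is used, so the hull theorem is really needed only for the other inclusion. What the paper's argument gives in return is the slice identity \eqref{eq:topslice}, a statement about the hull of $K_\partial$ itself in the style of the peak-fibre argument it borrows.
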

	
	The method also remains stable when the obstacle and the domain vary simultaneously, provided convergence of the domains is controlled from both sides. The following statement makes this precise without imposing any monotonicity on the sequence of domains.
	\begin{corollary}\label{cor:stability}
		Assume the hypotheses of Theorem~\ref{thm:continuity}, and suppose every
		point of $S_\Phi$ admits a neighbourhood barrier. Assume that $\ph$ extends
		to a continuous superharmonic function
		$\ph:U\to(-\infty,+\infty]$ on an open neighbourhood $U$ of $\clOm$.
		Let $V\Subset U$ be an open neighbourhood of $\clOm$, and let
		$\Om_j\Subset V$ be bounded domains. For every $j$, let
		$\ph_j:U\to(-\infty,+\infty]$ be continuous and superharmonic, not identically
		$+\infty$. Assume
		\begin{enumerate}[label=\textup{(\alph*)}]
			\item $e^{-\ph_j}\to e^{-\ph}$ uniformly on $\overline V$;
			\item every compact subset of $\Om$ lies in $\Om_j$ for all sufficiently large $j$;
			\item if $j_\nu\to\infty$, $z_\nu\in\overline{\Om_{j_\nu}}$, and $z_\nu\to z$,
			then $z\in\clOm$.
		\end{enumerate}
		Then $G_{\ph_j,\Om_j}\to G_{\ph,\Om}$ locally uniformly on $\Om$.
	\end{corollary}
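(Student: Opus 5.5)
We prove Corollary~\ref{cor:stability} through matching lower and upper bounds on compact subsets of $\Om$. The lower bound uses the neighbourhood density from Theorem~\ref{thm:continuity}. The upper bound uses a normal-families argument together with hypothesis (c). Throughout we work with $u_j\coloneqq G_{\ph_j,\Om_j}$ and $G\coloneqq G_{\ph,\Om}$.

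\emph{Lower bound.} Since the barriers at $S_\Phi$ are neighbourhood barriers, Theorem~\ref{thm:continuity} gives \eqref{eq:density} with $\nclass$ in place of $\cclass$, and $G$ is continuous on $\Om$. Fix a compact $L\subset\Om$ and $\eps>0$. Each point of $L$ gets some $v\in\nclass$ with $v>G-\eps$ there. By continuity of $G$, of $v$, and compactness, a finite maximum gives a single $v\in\PSH(W)\cap C(W)$ with $v\le\ph$ on $\clOm$ and $v>G-\eps$ on $L$. Here $W\supset\clOm$ is open.

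Next we transfer $v-\eps$ to a competitor for $u_j$. We need $v-\eps\le\ph_j$ on $\Om_j$ for large $j$, and this needs care near $\clOm$ but outside it. Shrink $W$ to $W'\Subset W$ containing $\clOm$. By (c), $\Om_j\subset W'$ for all large $j$.

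\emph{Comparing $v$ with $\ph$ off $\clOm$.} Continuity of $v$ and of $e^{-\ph}$ on $U$ gives a neighbourhood $W''$ of $\clOm$ on which $e^{-v+\eps/2}\ge e^{-\ph}$. To see this, note that on $\clOm$ we have $e^{-v}\ge e^{-\ph}$. Hence $e^{-v+\eps/2}>e^{-\ph}$ wherever $e^{-\ph}>0$, and the inequality is trivial where $e^{-\ph}=0$. The set where it holds is open and contains $\clOm$; this uses that $v$ is finite, so $e^{-v}$ is bounded below.

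\emph{Comparing $v$ with $\ph_j$.} Since $e^{-v}\ge c>0$ on $\overline{W''}$, uniform convergence (a) gives $e^{-\ph_j}\le e^{-\ph}+c(e^{\eps/2}-1)\le e^{-v+\eps}$ on $\overline{W''}\cap\overline V$ for large $j$. Thus $v-\eps\le\ph_j$ on $\Om_j\subset W''$ for large $j$, again by (c). So $v-\eps\le u_j$, and $u_j>G-2\eps$ on $L$ for large $j$.

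\emph{Upper bound.} Let $w\coloneqq(\limsup_j u_j)^*$ on $\Om$, restricting to subsequences as needed. By (b), each $u_j$ is defined near any compact subset of $\Om$ for large $j$. We need local uniform upper bounds for the $u_j$. Put $M\coloneqq\sup_{\overline V}e^{-\ph}$ and choose any point $p\in\Om$ with $\ph(p)<\infty$. Uniform convergence gives $\ph_j\ge-\log(M+1)$ on $\overline V$, so the sequence is uniformly bounded below; it is also bounded above near $p$. A standard Hartogs-type argument then applies: $u_j\le\ph_j$, and the sub-mean value property over balls inside $\Om$ bounds the $u_j$ from above, using that $\ph_j\to\ph$ locally uniformly where $\ph$ is finite. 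This is the main technical step, since $\ph$ may be $+\infty$ on parts of $\Om$. There we would bound $u_j$ by averaging over small spheres, where $\ph_j$ converges to finite values on a set of full measure. Superharmonic functions are finite a.e., and $\ph\in L^1_{\mathrm{loc}}$.

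Then $w\in\PSH(\Om)$. At points with $\ph<\infty$ we have $w\le\ph$, by local uniform convergence of $\ph_j$ and upper semicontinuity of the regularization. The set $\{\ph=+\infty\}\cap\Om$ is polar, hence Lebesgue-null. Since $w$ is plurisubharmonic, it is the essential $\limsup$ of its values, so $w\le\ph$ everywhere on $\Om$. Therefore $w\le G$. Hartogs' lemma then gives $\limsup_j\sup_L(u_j-G)\le0$, using continuity of $G$ (apply it to the compact $L$ with $G+\eps$ continuous). Combined with the lower bound, this yields locally uniform convergence $u_j\to G$ on $\Om$.
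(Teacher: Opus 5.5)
Your lower bound is correct and is essentially the paper's argument: you build the neighbourhood $W''$ directly, where the paper argues by contradiction with points $y_j\in\overline{\Om_j}$. Your upper-bound strategy is also the paper's: bound $w=(\limsup_j u_j)^*$ by $G$, then apply Hartogs' lemma using continuity of $G$.

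\textbf{The gap.} It lies in the step you yourself call the main technical one: a local uniform upper bound for $u_j$ near points of $A\cap\Om$. Without it, $w$ need not be plurisubharmonic and Hartogs' lemma is unavailable. Your justification does not work. You average over small spheres and use that $\ph_j$ converges a.e.\ to the finite, locally integrable $\ph$. But pointwise a.e.\ convergence gives no uniform bound on $\int\ph_j$; Fatou's lemma only bounds $\liminf_j\int\ph_j$ from below. Hypothesis (a) also gives essentially no control on $\ph_j$ where $e^{-\ph}$ is tiny.

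\emph{Counterexample to the step.} Take $a\in A\cap\Om$ and add to $\ph$ a continuous bump of height $j^{2n+1}$, supported in $B(a,1/j)$ and equal to $j^{2n+1}$ on $B(a,1/(2j))$.
\begin{itemize}
\item This changes $e^{-\ph}$ by at most $\sup_{B(a,1/j)}e^{-\ph}\to0$.
\item It preserves convergence off $\{a\}$.
\item Yet $\int_{B(a,r)}\ph_j\,dV\to\infty$.
\end{itemize}
Only superharmonicity of the $\ph_j$ rules this out, and your argument never uses it. Also, boundedness of $\ph_j$ near one chosen point $p$ says nothing near points of $A$ far from $p$.

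\textbf{The fix (the paper's argument).} Fix $a\in\Om$ and choose $r>0$ with $\overline{B(a,4r)}\subset\Om$.
\begin{enumerate}
\item Pick $q\in B(a,r)$ with $\ph(q)<\infty$; this is possible since $\ph\in L^1_{\mathrm{loc}}$. By (a), $\ph_j(q)\to\ph(q)$, so $\ph_j(q)\le C$ for large $j$.
\item The supermean inequality for the superharmonic $\ph_j$ gives
\[
\int_{B(q,3r)}\ph_j\,dV\le|B(q,3r)|\,\ph_j(q)\le C'.
\]
\item Combine this with the uniform lower bound $\ph_j\ge c_0$ on $\overline V$, which you already derived from (a). Since $B(x,r)\subset B(q,3r)$ for $x\in B(a,r)$, this bounds $\int_{B(x,r)}\ph_j\,dV$ uniformly.
\item By (b), $B(x,r)\subset\Om_j$ for large $j$. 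The submean inequality for $u_j\le\ph_j$ then gives a uniform upper bound for $u_j$ on $B(a,r)$.
\end{enumerate}
With this step inserted, the rest of your upper-bound argument goes through. Your essential-limsup remark can be dropped: $w\le\ph$ is automatic where $\ph=+\infty$.
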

	The counterexample in Example~\ref{ex:stability} shows that the outer condition (c) cannot in general be dropped.
	
	Continuity and approximation also lead to uniqueness once the competing maximal function is required to stay below the same obstacle. This fixed majorant is essential when the boundary values are unbounded.
	\begin{theorem}\label{thm:uniqueness}
		Assume the hypotheses of Theorem~\ref{thm:continuity}. Suppose that $A\cap\partial\Om$ is
		contained in a complete pluripolar set on a connected neighbourhood of $\clOm$, and that
		every $\xi\in\partial\Om\setminus A$ admits a continuous plurisubharmonic
		barrier. Then $G_{\ph,\Om}$ is the unique function $u$ satisfying
		\begin{gather*}
			u\in\PSH(\Om),\qquad \inf_\Om u>-\infty,\qquad u\le\ph,\\
			(\ddc u)^n=0,\qquad
			\lim_{\Om\ni z\to\xi}u(z)=\ph(\xi)
			\quad(\xi\in\partial\Om\setminus A).
		\end{gather*}
	\end{theorem}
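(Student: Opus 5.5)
Uniqueness has two halves. Existence, i.e. that $G_{\ph,\Om}$ satisfies all the listed conditions, is already available. Theorem~\ref{thm:basic}(i) gives plurisubharmonicity, the lower bound and $(\ddc G_{\ph,\Om})^n=0$. The inequality $G_{\ph,\Om}\le\ph$ holds by construction, since $G_{\ph,\Om}$ is a supremum of minorants of $\ph$; note $G^*=G$ and $\ph$ is lower semicontinuous. Theorem~\ref{thm:basic}(iii) gives the boundary limit at each $\xi\in\partial\Om\setminus A$, since these points have barriers by hypothesis.

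For an arbitrary competitor $u$, the inequality $u\le G_{\ph,\Om}$ is immediate, because $u$ is itself an admissible function in the definition of $G_{\ph,\Om}$. So the whole content is the reverse inequality $G_{\ph,\Om}\le u$.

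For the reverse inequality I use the density formula \eqref{eq:density} of Theorem~\ref{thm:continuity}. It suffices to show $v\le u$ on $\Om$ for every $v\in\cclass$. Fix such a $v$; it is finite, continuous on $\clOm$ and satisfies $v\le\ph$. Let $\psi$ be a plurisubharmonic function on a connected neighbourhood of $\clOm$ with $B=A\cap\partial\Om\subset\{\psi=-\infty\}$. By shifting, $\psi<0$ on $\clOm$, and $\psi\not\equiv-\infty$ by connectedness. For $\eps>0$ put $v_\eps\coloneqq v+\eps\psi-\eps$, which is plurisubharmonic on $\Om$. I want a comparison principle for $v_\eps$ against the maximal function $u$, and I check the boundary behaviour in two cases.
\begin{itemize}
\item At $\xi\in B$: $\limsup v_\eps\le v(\xi)+\eps\limsup\psi=-\infty$, because $\psi$ is upper semicontinuous and $\psi(\xi)=-\infty$. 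Since $\inf u>-\infty$, we get $\limsup_{z\to\xi}(v_\eps-u)<0$.
\item At $\xi\in\partial\Om\setminus A$: $\limsup v_\eps\le v(\xi)-\eps\le\ph(\xi)-\eps=\lim u-\eps$, with $\ph(\xi)$ finite.
\end{itemize}
Hence $\limsup_{z\to\xi}(v_\eps-u)(z)\le-\eps<0$ at every boundary point.

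By compactness of $\partial\Om$, the open set $\{v_\eps>u\}$ is then relatively compact in $\Om$. Here $u$ is locally bounded: it is bounded below, and bounded above locally since plurisubharmonic. Moreover $(\ddc u)^n=0$, and $v_\eps$ is plurisubharmonic and bounded above. I apply the Bedford--Taylor comparison (domination) principle to $u$ and $\max(v_\eps,u)$ on a neighbourhood of $\overline{\{v_\eps>u\}}$, where both are bounded. This gives $v_\eps\le u$ on $\Om$. Concretely, the maximality of $u$, a consequence of $(\ddc u)^n=0$ for locally bounded $u$, says that any plurisubharmonic $w$ with $w\le u$ outside a compact subset of $\Om$ satisfies $w\le u$ everywhere; take $w=\max(v_\eps,u)$. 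Letting $\eps\to0$ yields $v\le u$ on $\{\psi>-\infty\}$. That set has full measure and $\psi$ is locally integrable, so $v\le u$ almost everywhere. Since $u$ is plurisubharmonic and $v$ is continuous, $v\le u$ everywhere on $\Om$. Taking the supremum over $v\in\cclass$ and using \eqref{eq:density} gives $G_{\ph,\Om}\le u$.

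The main obstacle is the maximality step: the compact support condition must be verified carefully. In particular, near $B$ one needs $v_\eps-u\to-\infty$ uniformly along sequences. This follows because $\psi$ is upper semicontinuous and bounded above, so $\psi$ is very negative on a neighbourhood of each point of $B$, while $v$ is bounded and $u$ is bounded below. For points of $\partial\Om\setminus A$ near $B$, one uses that the boundary limit of $u$ exists pointwise. Strict negativity of $\limsup(v_\eps-u)$ at each boundary point, together with compactness of $\partial\Om$, is enough to conclude $\{v_\eps\ge u-\eps/2\}\Subset\Om$. The remaining point is that the density formula is exactly where the hypotheses of Theorem~\ref{thm:continuity} enter.
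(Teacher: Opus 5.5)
Your proof is correct and follows essentially the same route as the paper. Existence comes from Theorem~\ref{thm:basic}, and $u\le G_{\ph,\Om}$ holds by admissibility. The reverse inequality comes from comparing the pluripolar perturbation $v+\eps\psi$ of each $v\in\cclass$ with the maximal function $u$, discarding the null set $\{\psi=-\infty\}$, and invoking \eqref{eq:density}; your extra $-\eps$ shift and your a.e.-to-everywhere step via continuity of $v$ and ball means of $u$ (instead of comparing $u$ and $G_{\ph,\Om}$ directly) are only cosmetic differences.
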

	A common boundary trace without the fixed majorant need not give
	uniqueness; see the example in Section~\ref{sec:hullproofs}.
	
	For applications, the deformation hypothesis must be checked in terms of the geometry of the boundary and the singular set. The next results provide a differential criterion for avoiding the singular boundary and an analytic criterion for preserving logarithmic singularities.
	\begin{proposition}\label{prop:transverse}
		Suppose $\Om=\{\rho<0\}$ near $\clOm$, where $\rho$ is $C^1$ on a
		neighbourhood of $\clOm$, and let $\Phi_t$ be a neighbourhood deformation.
		Assume that its inverse is defined on a fixed neighbourhood of $\clOm$ for
		small $t$ and is jointly $C^1$ in $(z,t)$. If
		\begin{equation}\label{eq:negativeinverse}
			\Rea\sum_{i=1}^n\rho_{z_i}(\xi)
			\left.\frac{\partial (\Phi_t^{-1})_i}{\partial t}(\xi)\right|_{t=0}<0
			\quad(\xi\in A\cap\partial\Om),
		\end{equation}
		then $S_\Phi\cap A=\varnothing$. In fact a fixed relative neighbourhood of
		$A\cap\partial\Om$ in $\clOm$ lies in $\Phi_t(\Om)$ for every sufficiently
		small positive $t$.
	\end{proposition}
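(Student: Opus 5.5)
The plan is to prove the stronger assertion first: there exist a relative neighbourhood $V$ of $B\coloneqq A\cap\partial\Om$ in $\clOm$ and $t_1>0$ with $V\subset\Phi_t(\Om)$ for $0<t<t_1$. This gives $S_\Phi\cap A=\varnothing$ at once. Indeed, points of $A\cap\Om$ are interior points of $\Om$, not boundary points, so $S_\Phi\subset\partial\Om$ can meet $A$ only in $B$. If $\xi\in B\cap S_\Phi$, there are $t_j\downarrow0$ and $y_j\in\clOm\cap\Phi_{t_j}(\partial\Om)$ with $y_j\to\xi$. Eventually $y_j\in V\subset\Phi_{t_j}(\Om)$. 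Since $\Phi_{t_j}$ is injective, $y_j$ cannot lie in both $\Phi_{t_j}(\Om)$ and $\Phi_{t_j}(\partial\Om)$, which is a contradiction.

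For the neighbourhood statement, write $\Psi(t,z)\coloneqq\Phi_t^{-1}(z)$. By hypothesis $\Psi$ is jointly $C^1$ on $[0,t_0')\times N'$, where $N'$ is a fixed neighbourhood of $\clOm$, and $\Psi(0,z)=z$. Set $g(t,z)\coloneqq\rho(\Psi(t,z))$, which is $C^1$ in $(t,z)$. For $z$ near $\clOm$ and $t$ small, $z\in\Phi_t(\Om)$ is equivalent to $g(t,z)<0$, because $\Om=\{\rho<0\}$ near $\clOm$ and $\Psi(t,z)$ stays close to $z$. By the chain rule, using that $\rho$ is real,
\[
\partial_t g(0,z)=2\Rea\sum_{i=1}^n\rho_{z_i}(z)\,\partial_t\Psi_i(0,z).
\]
Hypothesis \eqref{eq:negativeinverse} says this is negative at each point of the compact set $B$. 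By continuity of $\partial_t g$ in $(t,z)$ and compactness of $B$, there are $\delta>0$, $t_1>0$ and an open set $W\supset B$ with $\partial_t g(t,z)\le-\delta$ for $(t,z)\in[0,t_1]\times W$.

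Next, on $W\cap\clOm$ we have $g(0,z)=\rho(z)\le0$. The mean value theorem in $t$ then gives $g(t,z)\le\rho(z)-\delta t<0$ for $0<t\le t_1$. Hence $z\in\Phi_t(\Om)$, and we may take $V\coloneqq W\cap\clOm$. One point needs care: $\Psi(t,z)$ must lie in the region where the description $\Om=\{\rho<0\}$ holds. This follows by shrinking $t_1$, since $\Psi(t,\cdot)\to\mathrm{id}$ uniformly on a compact neighbourhood of $\clOm$. The same shrinking guarantees $z\in\Phi_t(N)$, so $\Phi_t^{-1}(z)$ is genuinely the inverse point.

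The main obstacle is making this identification rigorous: showing that $z\in\Phi_t(\Om)$ if and only if $\Psi(t,z)\in\Om$ if and only if $g(t,z)<0$. This requires $\Psi(t,\cdot)$ to be the true inverse on the relevant set, which the hypothesis that the inverse is defined on a fixed neighbourhood provides. The derivative computation and the compactness argument are routine.
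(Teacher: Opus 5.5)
Your proposal is correct and follows the paper's proof essentially step for step. The paper also sets $R(x,t)=\rho(\Phi_t^{-1}(x))$, uses the chain-rule identity $\partial_tR(x,0)=2\Rea\sum_i\rho_{z_i}\partial_t(\Phi_t^{-1})_i$, gets a uniform negative bound on $\partial_tR$ near $A\cap\partial\Om$ by compactness, and integrates in $t$ to obtain $R(x,t)\le\rho(x)-ct<0$ on $W\cap\clOm$; your injectivity argument for $S_\Phi\cap A=\varnothing$ and your care about keeping $\Phi_t^{-1}(x)$ in the region where $\Om=\{\rho<0\}$ just spell out what the paper compresses into ``after shrinking the neighbourhoods'' and ``in particular''.
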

	Thus the criterion has a simple geometric interpretation: the inverse velocity has negative normal component, or equivalently the forward velocity points outward, along the singular boundary.
	
	\begin{lemma}\label{lem:divisor}
	Let $f$ be a nonzero reduced holomorphic function on a neighbourhood of
	$\clOm$, and let $\Phi_t$ be jointly holomorphic in $(z,t)$ for complex $t$
	near $0$, with $\Phi_0=\mathrm{id}$. Suppose that, for all sufficiently small
	real $t\ge0$, the divisor is preserved on a fixed neighbourhood:
	\[
	\Phi_t(\{f=0\})\subset\{f=0\}.
	\]
	After restricting the neighbourhood around $\clOm$ and taking $t$ small, the
	following assertions hold.
	\begin{enumerate}[label=\textup{(\roman*)}]
		\item There is a holomorphic unit $\theta_t$ such that
		\begin{equation}\label{eq:unitfactor}
			f\circ\Phi_t=\theta_t f,
			\qquad
			\theta_t\ne0,
			\qquad
			\theta_t\longrightarrow1
			\quad\text{uniformly on }\clOm.
		\end{equation}
		\item Consequently,
		\[
		\sup_{z\in\clOm\setminus\{f=0\}}
		\big|\log|f(\Phi_t(z))|-\log|f(z)|\big|
		\longrightarrow0.
		\]
	\end{enumerate}
\end{lemma}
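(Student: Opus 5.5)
The plan is to produce the unit by a divisibility argument, using reducedness, and then to pass to a uniform statement by compactness and joint holomorphy in $(z,t)$.

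\textbf{Divisibility.} Fix a small real $t\ge0$ and put $g_t\coloneqq f\circ\Phi_t$. This is holomorphic on a neighbourhood $N'$ of $\clOm$ with $\Phi_t(N')$ inside the domain of $f$. By hypothesis $g_t$ vanishes on $Z\coloneqq\{f=0\}$. Since $f$ is reduced, it generates the ideal of germs vanishing on $Z$, by the Nullstellensatz for hypersurfaces (locally $f$ is a product of distinct irreducible factors, each generating the ideal of its component). So locally $g_t=\theta_t f$ with $\theta_t$ holomorphic. The quotient $g_t/f$ is holomorphic off $Z$, and the local representations agree there, so they glue to a global holomorphic $\theta_t$ on $N'$.

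\textbf{Nonvanishing of $\theta_t$.} Apply the same argument to $\Phi_t^{-1}$. Since $\Phi_t$ is a biholomorphism and $\Phi_t(Z)\subset Z$, near each point we get $\Phi_t(Z)=Z$ locally: $\Phi_t(Z)$ is an analytic hypersurface of the same dimension contained in $Z$, and near $t=0$ it fills the components it meets. Hence $f\circ\Phi_t$ has exactly the same zero set as $f$. It is again reduced, being the pullback of a reduced function by a biholomorphism. So $f$ also divides $f\circ\Phi_t$ in the reverse direction, and $\theta_t$ is a unit.

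\textbf{Uniform convergence $\theta_t\to1$.} This is the main obstacle, since pointwise control of $g_t/f$ near $Z$ is not automatic. I would use joint holomorphy: $F(z,t)\coloneqq f(\Phi_t(z))$ is holomorphic in $(z,t)$ for complex $t$ near $0$. Its Taylor expansion in $t$ is $F(z,t)=f(z)+\sum_{k\ge1}t^k a_k(z)$. The plan is to show that $f$ divides every $a_k$, which gives $\theta_t=1+\sum t^k a_k/f$. A Cauchy estimate for the quotients on a slightly larger compact set, via the maximum principle across $Z$ (divided functions are bounded by their sup on a polydisc boundary avoiding $Z$ locally), then gives $\theta_t\to1$ uniformly on $\clOm$.

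The divisibility of the $a_k$ is the delicate point. Vanishing of $F(\cdot,t)$ on $Z$ holds for real $t\ge0$ small. Since $t\mapsto F(z,t)$ is holomorphic for each $z\in Z$ and vanishes on a real interval, it vanishes identically in $t$, so every $a_k$ vanishes on $Z$. By reducedness, $f\mid a_k$.

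For uniformity, set $q(z,t)\coloneqq F(z,t)/f(z)$. This is holomorphic off $Z\times\{t\}$ and locally bounded near $Z$, because $F=f\cdot\theta$ with $\theta$ jointly holomorphic: divisibility by $f$ holds in $\mathcal O_{z,t}$, since $F$ vanishes on $Z\times\Delta$ and $f$ is reduced as a function of $(z,t)$. Hence $q$ is jointly holomorphic, hence jointly continuous, with $q(\cdot,0)=1$. Compactness of $\clOm$ gives uniform convergence, and for small $t$ we get $|\theta_t-1|<1/2$, which independently confirms nonvanishing.

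\textbf{Assertion (ii).} On $\clOm\setminus Z$ we have
\[
\log|f\circ\Phi_t|-\log|f|=\log|\theta_t|.
\]
Its absolute value is bounded by $\sup_{\clOm}|\log|\theta_t||$, which tends to $0$ by (i).
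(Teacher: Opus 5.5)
Your proposal is correct, and its decisive step is exactly the paper's proof. The identity theorem in $t$ gives $f\circ\Phi_t=0$ on $\{f=0\}\times\Delta$; reducedness of $f$ in the $(z,t)$-variables then yields a jointly holomorphic quotient $\theta(z,t)$ with $\theta(\cdot,0)=1$; and compactness gives $|\theta_t-1|<1/2$ near $\clOm$, from which (i) and (ii) follow. The fixed-$t$ nonvanishing argument via $\Phi_t(Z)=Z$ is superfluous and can be dropped: it is only sketched, and as written it states the reverse divisibility the wrong way round. The same goes for the Taylor-coefficient and Cauchy-estimate detour, since joint divisibility alone gives everything.
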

	The divisor criterion becomes particularly concrete for disjoint holomorphic graphs. Writing $\pi(z',z_n)\coloneqq z'$, we first construct deformations that preserve each graph.
	\begin{proposition}\label{prop:graphs}
		Let $\Om\Subset\Om'\Subset U\subset\C^n$, where $U$ is open. Suppose
		$f_1,\ldots,f_k\in\OO(\pi(U))$ and
		$f_i(z')\ne f_j(z')$ for every $i\ne j$ and $z'\in\pi(U)$.
		Given a holomorphic map $\Psi:\pi(U)\to\C^{n-1}$, there is a jointly
		holomorphic function $\Theta(z,t)$ near $\overline{\Om'}\times\{0\}$ such that
		\[
		\Phi_t(z',z_n)\coloneqq(z'+t\Psi(z'),\ z_n+t\Theta(z',z_n,t))
		\]
		is a neighbourhood deformation of $\clOm$ and
		\[
		\Phi_t(z',f_j(z'))=(z'+t\Psi(z'),\ f_j(z'+t\Psi(z')))
		\]
		wherever these expressions are defined near $\overline{\Om'}$.
	\end{proposition}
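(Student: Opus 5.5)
We look for $\Phi_t$ in the prescribed form and reduce the graph-preservation requirement to an interpolation problem in the last variable. Write $w'=z'+t\Psi(z')$. The identity $\Phi_t(z',f_j(z'))=(w',f_j(w'))$ is equivalent to
\[
f_j(z')+t\Theta(z',f_j(z'),t)=f_j(z'+t\Psi(z')),\qquad 1\le j\le k,
\]
that is, $\Theta(z',f_j(z'),t)=g_j(z',t)$ with
\[
g_j(z',t)\coloneqq\frac{f_j(z'+t\Psi(z'))-f_j(z')}{t}
=\int_0^1 Df_j\bigl(z'+st\Psi(z')\bigr)\Psi(z')\,ds .
\]
The integral formula shows that $g_j$ is jointly holomorphic near $\pi(\overline{\Om'})\times\{0\}$, with $g_j(z',0)=Df_j(z')\Psi(z')$. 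We take $\Theta$ to be the Lagrange interpolation polynomial in $z_n$:
\[
\Theta(z',z_n,t)\coloneqq\sum_{j=1}^k g_j(z',t)\prod_{i\ne j}\frac{z_n-f_i(z')}{f_j(z')-f_i(z')}.
\]
The denominators never vanish on $\pi(U)$ by hypothesis, so $\Theta$ is holomorphic wherever the $g_j$ are. This means $\Theta$ is defined on a set of the form $\{(z',z_n,t): z'\in\pi(N_0),\ z_n\in\C,\ |t|<\tau\}$, where $N_0$ is a neighbourhood of $\overline{\Om'}$. The interpolation identity then holds wherever the expressions are defined.

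Next we check Definition~\ref{def:deformation}. We choose an open $N$ with $\clOm\subset N\Subset\Om'$ (or just inside a small neighbourhood of $\overline{\Om'}$) and restrict to real $t\in[0,t_0)$. On $\overline N$ the map $\Phi_t-\mathrm{id}=t\,(\Psi,\Theta)$ is jointly holomorphic and bounded, with derivative of size $O(t)$, uniformly in $z$. Hence $\Phi_t\to\mathrm{id}$ uniformly on $N$, and $\Phi_t$ is continuous in $(t,z)$.

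Injectivity comes from the derivative bound. If $N$ is chosen convex, or we work on small balls, then $\|D\Phi_t-I\|<1/2$ gives
\[
|\Phi_t(a)-\Phi_t(b)|\ge\tfrac12|a-b|,
\]
so $\Phi_t$ is injective on convex pieces. For a general neighbourhood we argue as follows. Fix a convex neighbourhood $N_1\Supset N$ of the relevant size, or simply work with $N$ equal to a finite union of balls. Global injectivity on a compact neighbourhood follows from the standard argument: a local diffeomorphism that is $C^1$-close to the identity is injective on a compact set, since $|\Phi_t(a)-\Phi_t(b)|\ge |a-b|-2\sup|\Phi_t-\mathrm{id}|$ handles pairs far apart, and the Lipschitz estimate handles nearby pairs. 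Combined with the inverse function theorem, each $\Phi_t$ is biholomorphic onto its image.

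It remains to show that, for $z\in\Om$, the inverse $\Phi_t^{-1}(z)$ is defined for small $t$ and real analytic in $t$. Consider $F(x,t)=\Phi_t(x)-z$, which is jointly holomorphic for complex $t$ near $0$, with $F(z,0)=0$ and $D_xF(z,0)=I$. The holomorphic implicit function theorem gives a holomorphic solution $x(t)$ near $t=0$. Its restriction to real $t$ is real analytic, and it equals $\Phi_t^{-1}(z)$ by injectivity.

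The main obstacle is the bookkeeping of domains. We need one fixed $N$ and one $t_0$ such that $\Theta$ is defined on $N\times[0,t_0)$, the derivative bound holds there, and the interpolation identity holds at the points $(z',f_j(z'))$ near $\overline{\Om'}$, even though $z'+t\Psi(z')$ may leave the set where $f_j$ is given. Compactness of $\overline{\Om'}$ in $U$ handles this: there is $\delta>0$ with the $\delta$-neighbourhood of $\pi(\overline{\Om'})$ contained in $\pi(U)$. Taking $\tau<\delta/(1+\sup|\Psi|)$ keeps $z'+st\Psi(z')$ inside $\pi(U)$ for $|t|<\tau$, $0\le s\le1$, and $z'$ near $\pi(\overline{\Om'})$. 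The rest of the argument is then routine.
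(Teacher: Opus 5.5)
Your proposal is correct and follows the paper's proof essentially step for step: the same divided differences (with the integral formula giving holomorphy at $t=0$), the same Lagrange interpolation $\Theta=\sum_j\theta_jL_j$ in $z_n$, the same near-pair/far-pair injectivity estimate, and the holomorphic inverse (implicit) function theorem in complex $t$ for the inverse trajectories. One point should be stated explicitly, as the paper does: fix $r>0$ so that segments between points of $N$ at distance less than $r$ stay inside the larger region where the derivative bound holds, rather than appealing to a convex neighbourhood $N_1\Supset N$, which need not lie in the domain of $\Phi_t$.
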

	
	\begin{proposition}\label{prop:graphcontinuity}
		Let $U\subset\C^n$ be a domain admitting a $C^2$ strictly plurisubharmonic
		exhaustion $\rho$. Let $\alpha$ be a regular value of $\rho$, and let $\Om$
		be a connected component of $\{z\in U:\rho(z)<\alpha\}$. Suppose that
		$f_1,\ldots,f_k\in\OO(\pi(U))$ are pairwise distinct at every point of
		$\pi(U)$, and let $\Psi:\pi(U)\to\C^{n-1}$ be holomorphic. Set
		\[
		f(z)\coloneqq\prod_{j=1}^k(z_n-f_j(z')),\qquad \ph\coloneqq-\log|f|,\qquad A_j\coloneqq\{z_n=f_j(z')\}.
		\]
		For $\xi\in A_j\cap\partial\Om$, define
		\begin{equation}\label{eq:graphnormal}
			h_j(\xi)\coloneqq\Rea\sum_{i=1}^{n-1}\Psi_i(\xi')
			\left(\rho_{z_i}(\xi)+\frac{\partial f_j}{\partial z_i}(\xi')
			\rho_{z_n}(\xi)\right).
		\end{equation}
		If all these quantities are positive, or all are negative, then
		$G_{\ph,\Om}$ is continuous on $\Om$. The conclusions of
		Theorems~\ref{thm:hull} and~\ref{thm:uniqueness} also hold with this obstacle
		and ambient domain $U$.
	\end{proposition}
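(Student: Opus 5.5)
We plan to verify the hypotheses of Theorem~\ref{thm:continuity} for the graph-preserving deformation of Proposition~\ref{prop:graphs}, and then those of Theorems~\ref{thm:hull} and~\ref{thm:uniqueness}.

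\textbf{Setup and choice of deformation.} Since $\rho$ is a strictly plurisubharmonic exhaustion and $\alpha$ is a regular value, $\Om$ is a bounded, smoothly bounded, strictly pseudoconvex domain. Hence every boundary point admits a neighbourhood barrier: take a local peak function built from the Levi polynomial, glued with $\rho-\alpha$ by a maximum. This disposes of the barrier half of (D2) and of the barrier hypotheses in Theorem~\ref{thm:uniqueness}. The sublevel set $\{\rho\le\alpha\}$ is $\OO(U)$-convex, and so is $\clOm$, since it is a union of components of that set. The set $A\cap\partial\Om=\{f=0\}\cap\partial\Om$ lies in the complete pluripolar set $\{f=0\}\subset U$.

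Apply Proposition~\ref{prop:graphs} with $\Psi$ if all the $h_j$ are negative, and with $-\Psi$ if all are positive. This gives $\Phi_t$ preserving every graph $A_j$, so $\Phi_t(\{f=0\})\subset\{f=0\}$. Lemma~\ref{lem:divisor} then yields $f\circ\Phi_t=\theta_t f$ with $\theta_t\to1$ uniformly. Hence $\ph(x)-\ph(\Phi_t(x))=\log|\theta_t(x)|\to0$ uniformly off $A$. When $x_j\in A$ we also have $y_j\in A$, which is excluded in (D1). So (D1) holds, in a two-sided form.

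\textbf{Transversality.} For (D2) it remains to show $S_\Phi\cap A=\varnothing$, using Proposition~\ref{prop:transverse}. The inverse map satisfies $\partial_t\Phi_t^{-1}|_{t=0}=-\partial_t\Phi_t|_{t=0}=-(\pm\Psi,\Theta(\cdot,0))$. On $A_j$, differentiating the graph identity of Proposition~\ref{prop:graphs} at $t=0$ gives
\[
\Theta(z',f_j(z'),0)=\sum_i\frac{\partial f_j}{\partial z_i}(z')\Psi_i(z')
\]
with the same sign choice. Substituting into \eqref{eq:negativeinverse} turns the left-hand side into $\mp h_j(\xi)$. With our sign choice this is negative on every $A_j\cap\partial\Om$. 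The $C^1$ joint regularity of the inverse on a fixed neighbourhood follows from the inverse function theorem, since the deformation is holomorphic in $(z,t)$. The real-analyticity required in Definition~\ref{def:deformation} likewise comes from joint holomorphy.

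\textbf{Conclusion, and the main obstacle.} Theorem~\ref{thm:continuity} now gives continuity of $G_{\ph,\Om}$. Theorems~\ref{thm:hull} and~\ref{thm:uniqueness} apply with ambient domain $U$, which is pseudoconvex because it carries a strictly plurisubharmonic exhaustion.

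The delicate step is the sign bookkeeping in the transversality computation. One has to check that the normal component of the inverse velocity along $A_j$ is exactly $-h_j$, including the $\rho_{z_n}\partial f_j/\partial z_i$ term. This hinges on expressing $\Theta(\cdot,0)$ on the graph correctly, and on the fact that the two sign cases are handled by replacing $\Psi$ with $-\Psi$. A secondary point is that Proposition~\ref{prop:graphs} needs $\Om\Subset\Om'\Subset U$. We take $\Om'$ to be a slightly larger sublevel component, which exists because $\alpha$ is a regular value.
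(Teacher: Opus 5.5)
Your overall route is the paper's: the graph-preserving deformation of Proposition~\ref{prop:graphs}, Lemma~\ref{lem:divisor} for (D1), Proposition~\ref{prop:transverse} for $S_\Phi\cap A=\varnothing$, barriers from strict plurisubharmonicity, $\OO(U)$-convexity of $\clOm$, and complete pluripolarity of $\{f=0\}$. However, the step you flag as delicate is done with the wrong sign, and as written it fails.

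Your formula is correct. With $+\Psi$, the left side of \eqref{eq:negativeinverse} at $\xi\in A_j\cap\partial\Om$ equals $-h_j(\xi)$; with $-\Psi$ it equals $+h_j(\xi)$. So you must use $+\Psi$ when all $h_j>0$ (the paper's normalization) and $-\Psi$ when all $h_j<0$. Your assignment is the opposite, and it makes the left side equal to $|h_j(\xi)|>0$.

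This is not just the failure of a sufficient criterion. With your choice the forward velocity has normal component $\Rea\sum_i\rho_{z_i}(\xi)\,\partial_t(\Phi_t)_i(\xi)|_{t=0}=-|h_j(\xi)|<0$. Hence for $\xi\in A\cap\partial\Om$ and small $t>0$, the point $\Phi_t(\xi)\in\Phi_t(\partial\Om)$ satisfies $\rho(\Phi_t(\xi))<\alpha$, so it lies in $\Om$, and it tends to $\xi$. Thus $\xi\in S_\Phi\cap A$, and (D2) is violated. Swapping the two cases repairs the argument. The rest of your verification is fine: the two-sided (D1) via the unit $\theta_t$, the value of $\Theta(\cdot,0)$ on the graphs, and the regularity of the inverse.

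Two smaller remarks. First, your Levi-polynomial peak function glued with $\rho-\alpha$ does produce neighbourhood barriers. The paper instead uses $\rho-\alpha-\eta|z-\xi|^2$: it is plurisubharmonic near $\clOm$ for small $\eta>0$, vanishes at $\xi$, and is negative on $\clOm\setminus\{\xi\}$, so no local construction is needed.

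Second, ``$\clOm$ is a union of components'' needs a little more to give $\OO(U)$-convexity. Use the regular value to see that $\clOm$ is one of the finitely many components of the compact manifold with boundary $\{\rho\le\alpha\}$, hence open and closed in it. Then separate it from the other components by Oka--Weil approximation of a locally constant function, as the paper does.
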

	Example~\ref{ex:curved} later illustrates this criterion for two genuinely curved, disjoint graphs.
	
	The rest of the paper is organized as follows. Section~\ref{sec:aux}
	collects the auxiliary lemmas, Section~\ref{sec:proofcontinuity} proves the
	continuity and approximation results, and Section~\ref{sec:hullproofs} treats
	hulls, stability and uniqueness. Sections~\ref{sec:geometry} and
	\ref{sec:scope} give geometric applications and the extension to holomorphic
	tuples.
	
	\section{Preliminaries and approximation lemmas}\label{sec:aux}

	We begin with the comparison principle that will be used repeatedly to pass
	from boundary inequalities to the obstacle envelope.

	\begin{lemma}\label{lem:comparison}
	Let $u\in\PSH(\Om)\cap L^\infty_{\mathrm{loc}}(\Om)$ be maximal and bounded from below, and let
	$v\in\PSH(\Om)$.
	\begin{enumerate}[label=\textup{(\roman*)}]
		\item If
		\[
		\limsup_{\Om\ni z\to\xi}\bigl(v(z)-u(z)\bigr)\le0,
		\qquad \xi\in\partial\Om,
		\]
		then
		\[
		v\le u\qquad\text{on }\Om.
		\]
		\item Under \eqref{eq:standing}, if
		\[
		f\in C(\partial\Om),\qquad f\le\ph\quad\text{on }\partial\Om,
		\]
		then
		\begin{equation}\label{eq:PBbound}
			\PB_{f,\Om}\le G_{\ph,\Om}\le\ph
			\qquad\text{on }\Om.
		\end{equation}
	\end{enumerate}
\end{lemma}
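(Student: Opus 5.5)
For part (i), I will use the Bedford--Taylor comparison principle through an $\eps$-perturbation. Because $\Om$ is bounded, the function $\psi(z)\coloneqq|z|^2-R^2$ is plurisubharmonic on $\C^n$ and negative on $\clOm$ once $R$ is large. For $\eps>0$ and $\delta>0$ put
\[
v_{\eps,\delta}\coloneqq v+\eps\psi-\delta .
\]
The boundary hypothesis then gives
\[
\limsup_{z\to\xi}\bigl(v_{\eps,\delta}-u\bigr)\le-\delta<0
\qquad\text{at every }\xi\in\partial\Om .
\]
By compactness of $\partial\Om$, the set $E\coloneqq\{v_{\eps,\delta}>u\}$ is relatively compact in $\Om$. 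On a neighbourhood of $\overline E$ the function $u$ is bounded, because it is locally bounded, so $\max(v_{\eps,\delta},u)$ is also locally bounded there. The case $v\equiv-\infty$ is trivial, and otherwise I replace $v$ by $\max(v,u-C)$ locally where needed.

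Next I take an open set $\Om'\Subset\Om$ containing $\overline E$ and compare $w\coloneqq\max(v_{\eps,\delta},u)$ with $u$ on $\Om'$. The two functions agree near $\partial\Om'$. By maximality of $u$, which the paper phrases as $(\ddc u)^n=0$ with $u$ locally bounded, $u$ dominates every plurisubharmonic function that is $\le u$ near $\partial\Om'$. Since $v_{\eps,\delta}\le w$ and $w=u$ near $\partial\Om'$, this gives $v_{\eps,\delta}\le u$ on $\Om'$, hence on all of $\Om$. If one prefers to argue directly, the Bedford--Taylor comparison principle gives
\[
\int_{E}(\ddc v_{\eps,\delta})^n\le\int_E(\ddc u)^n=0 .
\]
On the other hand, $(\ddc v_{\eps,\delta})^n\ge\eps^n(\ddc\psi)^n>0$ is a positive multiple of Lebesgue measure. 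So $E$ is Lebesgue-null, and because it is open it must be empty. Letting $\eps,\delta\to0$ yields $v\le u$.

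The main technical point is that $v$ need not be locally bounded. I handle this by comparing $\max(v_{\eps,\delta},u-M)$ with $u$. This function is bounded on $\overline E$ and coincides with $v_{\eps,\delta}$ on $E$ for $M$ large on compacts. Alternatively, I avoid measures entirely and use only the maximality characterization, which requires no boundedness of $v$. The lower bound on $u$ is not essential to this step, but it ensures that $v-u$ is well defined.

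For part (ii), the inequality $G_{\ph,\Om}\le\ph$ is immediate, because every competitor is $\le\ph$ on $\Om$. This holds for the upper semicontinuous regularisation as well: $\ph$ is superharmonic and hence lower semicontinuous, and the regularised envelope is plurisubharmonic and $\le\ph$ almost everywhere. Subharmonic-minus-superharmonic sub-mean value arguments then give the bound everywhere, so the envelope is itself a competitor.

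For $\PB_{f,\Om}\le G_{\ph,\Om}$, let $v$ be a competitor in \eqref{eq:PB}. I will show $v\le G_{\ph,\Om}$ by applying part (i) with $u=G_{\ph,\Om}$, which is maximal, locally bounded and bounded below by Theorem~\ref{thm:basic}(i). Since the boundary limits of $G_{\ph,\Om}$ are not known in general, I use a different route. The function $v$ is a subharmonic function on $\Om$ with $v^*\le f\le\ph$ on $\partial\Om$. The function $v-\ph$ is subharmonic in the generalized sense, since on $\{\ph<\infty\}$ it is the difference of a subharmonic and a superharmonic function, and it tends to $-\infty$ where $\ph=+\infty$. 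Its boundary $\limsup$ is $\le f-\ph\le0$ by lower semicontinuity of $\ph$ on $\clOm$. The maximum principle for the subharmonic function $v+(-\ph)$ then gives $v\le\ph$ on $\Om$. Hence $v$ is admissible for $G_{\ph,\Om}$, and taking the supremum over all such $v$ gives $\PB_{f,\Om}\le G_{\ph,\Om}$.
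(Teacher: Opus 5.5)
Your proof is correct and is essentially the paper's own argument. In (i) you use compactness of $\partial\Om$ to confine the set where $v+\eps\psi-\delta>u$ to a compact subset of $\Om$, invoke the definition of maximality on a relatively compact subdomain, and let the parameters tend to zero; the $\eps\psi$ term is superfluous on this route. In (ii) you apply the maximum principle to the subharmonic function $v-\ph$, exactly as the paper does.

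The only flaw is in your optional Monge--Amp\`ere aside. There, $E=\{v_{\eps,\delta}>u\}$ need not be open, since $v_{\eps,\delta}-u$ need not be lower semicontinuous when $v$ is discontinuous. A Lebesgue-null $E$ therefore only gives $v_{\eps,\delta}\le u$ almost everywhere, and you would pass to everywhere by comparing ball means. Your main maximality route does not need this step.
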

	\begin{proof}
		Fix $\delta>0$. For each $\xi\in\partial\Om$, the boundary limsup
		assumption gives a neighbourhood $V_\xi$ of $\xi$ such that
		\[
			v\le u+\delta
			\qquad\text{on }\Om\cap V_\xi.
		\]
		A finite subcover therefore gives a relative boundary neighbourhood $V$ on
		which this inequality holds. Choose an open set $D\Subset\Om$ with
		$\Om\setminus V\subset D$; then $\partial D\subset V$. Since $u$ is maximal, the finite translate $u+\delta$ is again maximal,
		and the defining comparison property gives $v\le u+\delta$ on $D$. The same inequality already holds on $\Om\setminus D$. Letting
		$\delta\downarrow0$ proves \textup{(i)}.
		For the second, an admissible $v$ in \eqref{eq:PB} is bounded above by
		$\max_{\partial\Om}f$, by the subharmonic maximum principle. The function
		$v-\ph$ is subharmonic. At finite boundary points its upper limit is at most
		zero; at a point of $A$ it tends to $-\infty$. Thus $v\le\ph$ on $\Om$.
		Taking the supremum over $v$ proves \eqref{eq:PBbound}.
	\end{proof}

	The comparison principle immediately turns a boundary barrier into a global
	plurisubharmonic minorant. The next lemma records the uniform version needed
	near a compact set of boundary points.

	\begin{lemma}\label{lem:barrierminorants}
	Suppose \eqref{eq:standing}, and let
	\[
	S\subset\partial\Om\setminus A
	\]
	be compact. Assume that every point of $S$ admits a continuous
	plurisubharmonic barrier. Then, for every $\eps>0$:
	\begin{enumerate}[label=\textup{(\roman*)}]
		\item there exist $u_S\in\cclass$ and a relative neighbourhood $W$ of $S$
		in $\clOm$ such that
		\[
		u_S>\ph-\eps\qquad\text{on }W;
		\]
		moreover, $\overline W$ may be chosen disjoint from $A$ and from any
		prescribed compact subset of $\Om$;
		\item if the barriers are neighbourhood barriers, then $u_S$ may be chosen in
		$\nclass$.
	\end{enumerate}
\end{lemma}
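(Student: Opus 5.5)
The plan is the standard barrier construction: build, for each point of $S$, a continuous plurisubharmonic minorant of $\ph$ that nearly attains $\ph$ at that point, and then combine finitely many of them by a finite maximum. Since $S\cap A=\varnothing$ and $S$ is compact, $\ph$ is finite and continuous on a compact relative neighbourhood of $S$ in $\clOm$. Set
\[
m\coloneqq\min_{\clOm}\ph>-\infty .
\]

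\emph{Local minorant.} Fix $\xi\in S$ with barrier $\theta_\xi$. By continuity of $\ph$ at $\xi$, choose a relative neighbourhood $V_\xi$ of $\xi$ in $\clOm$, with $\overline{V_\xi}\cap A=\varnothing$, on which $\ph>\ph(\xi)-\eps/4$. Since $\theta_\xi<0$ on the compact set $\clOm\setminus V_\xi$, we have $\theta_\xi\le-\eta$ there for some $\eta>0$. Put
\[
w_\xi\coloneqq \ph(\xi)-\eps/4+M\theta_\xi,
\]
with $M$ so large that $\ph(\xi)-\eps/4-M\eta\le m$. Then $w_\xi\le\ph$ on $\clOm\setminus V_\xi$. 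On $V_\xi$ we have $w_\xi\le\ph(\xi)-\eps/4<\ph$, using $\theta_\xi\le0$. Thus $w_\xi\in\cclass$, it is continuous on $\clOm$, and
\[
w_\xi(\xi)=\ph(\xi)-\eps/4 .
\]
By continuity of $w_\xi$ and $\ph$ at $\xi$, there is an open relative neighbourhood $W_\xi\subset V_\xi$ of $\xi$ on which $w_\xi>\ph-\eps$.

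\emph{Finite maximum.} Cover $S$ by finitely many $W_{\xi_1},\dots,W_{\xi_N}$ and set
\[
u_S\coloneqq\max_i w_{\xi_i}.
\]
A finite maximum of continuous plurisubharmonic functions is continuous and plurisubharmonic, and each $w_{\xi_i}\le\ph$ on $\clOm$, so $u_S\in\cclass$. On $\bigcup_i W_{\xi_i}$ we get $u_S>\ph-\eps$. For the extra requirement, shrink this union: given a compact $L\subset\Om$, choose a relative neighbourhood $W$ of $S$ whose closure lies in $\bigcup_i W_{\xi_i}$ and avoids both $L$ and $A$. This is possible because $S$ is compact, disjoint from $L$ (as $S\subset\partial\Om$), and disjoint from $A$; take $W$ to be a small closed-tubular relative neighbourhood of $S$ intersected with $\clOm$. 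This proves (i).

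\emph{Neighbourhood barriers, part (ii).} If each $\theta_\xi$ extends to a continuous plurisubharmonic function $\tilde\theta_\xi$ on an open set $W'_\xi\supset\clOm$, then the same formula defines $w_\xi$ on $W'_\xi$. The maximum $u_S$ is then continuous and plurisubharmonic on $\bigcap_i W'_{\xi_i}$, which is an open neighbourhood of $\clOm$. The inequality $u_S\le\ph$ is needed only on $\clOm$, where $u_S$ agrees with the function from (i). Hence $u_S\in\nclass$.

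\emph{Where care is needed.} The only delicate point is the choice of $M$. It must work uniformly on $\clOm\setminus V_\xi$, where $\ph$ may equal $+\infty$. This causes no difficulty, since there we only need the lower bound $\ph\ge m$. The argument otherwise avoids any comparison principle.
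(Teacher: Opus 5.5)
Your proof is correct. It follows the paper's construction of $\ph(\xi)-\eps/4+M\theta_\xi$ followed by a finite maximum, but it differs at one step: how you show that this function stays below $\ph$ inside $\Om$.

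The paper arranges the inequality only on $\partial\Om$. It chooses the neighbourhood of $\xi$ in the boundary and takes $M_\xi$ large relative to boundary values. It then invokes Lemma~\ref{lem:comparison}\textup{(ii)}, i.e.\ $u_\xi\le\PB_{u_\xi|_{\partial\Om},\Om}\le G_{\ph,\Om}\le\ph$. The proof of that lemma is the subharmonic maximum principle applied to $v-\ph$, so superharmonicity of $\ph$ is what carries the bound into $\Om$.

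You instead take $V_\xi$ as a neighbourhood in $\clOm$ and push $w_\xi$ below $m=\min_{\clOm}\ph$ on the compact set $\clOm\setminus V_\xi$. This gives $w_\xi\le\ph$ pointwise on all of $\clOm$ with no comparison principle.

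What each route buys:
\begin{itemize}
\item Your argument is more elementary. It also shows the lemma holds for any continuous $\ph:\clOm\to(-\infty,+\infty]$, superharmonic or not.
\item The paper's argument uses only boundary information to fix $M_\xi$. As a by-product it places each $u_\xi$ below the Perron--Bremermann envelope of its own trace. This fits the paper's boundary-data viewpoint but is not needed for the statement.
\end{itemize}

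Two cosmetic points:
\begin{itemize}
\item If $S=\varnothing$, your maximum is over an empty family. Include the constant $m$ in the maximum as the paper does, or set $u_S\equiv m$ and $W=\varnothing$.
\item The shrunken $W$ should be an open relative tube $\{z\in\clOm:\dist(z,S)<\delta\}$. Take $\delta<\dist(S,L\cup A)$, and small enough that $\{z\in\clOm:\dist(z,S)\le\delta\}\subset\bigcup_i W_{\xi_i}$. Such a $\delta$ exists by compactness, since $\clOm\setminus\bigcup_i W_{\xi_i}$ is closed and disjoint from $S$.
\end{itemize}
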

	\begin{proof}
		Choose a constant $m\le\inf_{\clOm}\ph$. For $\xi\in S$, put
		\[
		c_\xi\coloneqq\ph(\xi)-\eps/4.
		\] Continuity makes $\ph>c_\xi$ on a boundary
		neighbourhood of $\xi$. Away from a smaller such neighbourhood, a barrier
		$\theta_\xi$ is bounded above by a negative constant. Consequently, for a
		sufficiently large $M_\xi>0$, the function
		\[
		u_\xi\coloneqq c_\xi+M_\xi\theta_\xi
		\]
		is at most $\ph$ on $\partial\Om$.
		Apply Lemma~\ref{lem:comparison}\textup{(ii)} with the boundary datum
		$f=u_\xi|_{\partial\Om}$. Since $u_\xi$ itself is an admissible
		Perron--Bremermann competitor for this datum, we obtain
		$u_\xi\le\PB_{f,\Om}\le G_{\ph,\Om}\le\ph$ on $\Om$. Since $u_\xi(\xi)=\ph(\xi)-\eps/4$, the
		inequality $u_\xi>\ph-\eps$ persists on a relative neighbourhood of $\xi$
		in $\clOm$. A finite maximum of these functions and $m$ supplies $u_S$.
		The same finite maximum is defined and plurisubharmonic on a neighbourhood
		of $\clOm$ when all the chosen barriers extend there. Shrinking $W$ gives
		the last separation properties.
	\end{proof}
	
	We next isolate two elementary consequences of the deformation hypotheses.
	The first rewrites the one-sided obstacle condition in terms of the radius
	function $e^{-\ph}$, while the second locates the part of the moving boundary
	that can meet the fixed closure.
	
	\begin{lemma}\label{lem:radiuscriterion}
	Put
	\[
	B\coloneqq A\cap\partial\Om,
	\qquad
	r\coloneqq e^{-\ph}\quad\text{on }\clOm.
	\]
	Condition \textup{(D1)} in Theorem~\ref{thm:continuity} is equivalent to the
	following local condition: for every $\eps>0$ there are $\delta>0$ and an open
	neighbourhood $V_\eps$ of $B$ such that
	\begin{equation}\label{eq:radiuscriterion}
		\begin{gathered}
			r(\Phi_t(x))\le e^\eps r(x),\\
			0<t<\delta,\qquad
			x\in\Om\cap V_\eps,\qquad
			\Phi_t(x)\in\clOm.
		\end{gathered}
	\end{equation}
	If $B=\varnothing$, the condition is vacuous. At a zero of $r$, the
	inequality in \eqref{eq:radiuscriterion} is interpreted literally.
\end{lemma}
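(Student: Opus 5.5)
We prove the two implications separately. In both directions we rewrite the difference $\ph(x)-\ph(y)$ as $\log\bigl(r(y)/r(x)\bigr)$, which is legitimate whenever both values are finite. Note first that $r=e^{-\ph}$ is a continuous map $\clOm\to[0,\infty)$ and $A=\{r=0\}$; $B=A\cap\partial\Om$ is compact. If $B=\varnothing$, both (D1) and \eqref{eq:radiuscriterion} are vacuous: no sequence tends to a point of $B$, and we may take $V_\eps=\varnothing$. So assume $B\neq\varnothing$.

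\emph{From \eqref{eq:radiuscriterion} to (D1).} Let $t_j\downarrow0$ and $x_j\in\Om$ with $x_j\to\xi\in B$, and put $y_j=\Phi_{t_j}(x_j)\in\clOm\setminus A$. Fix $\eps>0$. For large $j$ we have $t_j<\delta$ and $x_j\in V_\eps$, so $r(y_j)\le e^\eps r(x_j)$.
If $x_j\in A$, then $r(x_j)=0$, which would force $r(y_j)=0$. This contradicts $y_j\notin A$, so for large $j$ the case $x_j\in A$ cannot occur, and the problematic $+\infty$ differences disappear.
Otherwise both radii are positive, and taking logarithms gives $\ph(x_j)-\ph(y_j)\le\eps$. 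Hence the $\limsup$ is at most $\eps$ for every $\eps>0$, which is \eqref{eq:compatibility}.

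\emph{From (D1) to \eqref{eq:radiuscriterion}.} Argue by contradiction. Suppose that for some $\eps>0$ no pair $(\delta,V_\eps)$ works. Take $\delta=1/j$ and $V=\{\dist(\cdot,B)<1/j\}$. This produces $t_j\in(0,1/j)$ and $x_j\in\Om$ with $\dist(x_j,B)<1/j$, $y_j\coloneqq\Phi_{t_j}(x_j)\in\clOm$, and
\[
r(y_j)>e^\eps r(x_j).
\]
By compactness of $B$, pass to a subsequence with $x_j\to\xi\in B$.

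Two technical points need attention here, and I expect this to be the main (though mild) obstacle: making the sequence fit the hypotheses of (D1) exactly.

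First, (D1) is phrased for $t_j\downarrow0$. After passing to a subsequence, the $t_j$ are strictly decreasing. Alternatively, one reads "$t_j\downarrow0$" simply as $t_j\to0^+$.

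Second, (D1) requires $y_j\notin A$. The strict inequality gives $r(y_j)>e^\eps r(x_j)\ge0$, so $r(y_j)>0$ and hence $y_j\in\clOm\setminus A$ automatically.

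Now (D1) applies to this sequence. If $x_j\in A$ for infinitely many $j$, those differences equal $+\infty$ and the $\limsup$ is $+\infty>0$, a contradiction. Otherwise, for large $j$ both values are finite and
\[
\ph(x_j)-\ph(y_j)=\log\frac{r(y_j)}{r(x_j)}>\eps .
\]
Then the $\limsup$ is at least $\eps>0$, again contradicting \eqref{eq:compatibility}.

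Finally, the interpretation "at a zero of $r$, literally" is exactly what makes the case $x_j\in A$ correspond to the difference $+\infty$ in (D1). In that case the inequality $r(\Phi_t(x))\le0$ can only hold when $\Phi_t(x)\in A$, matching the convention of (D1).
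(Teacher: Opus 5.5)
Your proposal is correct and follows essentially the same argument as the paper. For the implication from (D1), you argue by contradiction with $\dist(x_j,B)<1/j$, $t_j\to0$ and $r(y_j)>e^\eps r(x_j)$, where the strict inequality forces $y_j\notin A$ and the cases $r(x_j)=0$ and $r(x_j)>0$ both contradict (D1). For the converse, eventual validity of \eqref{eq:radiuscriterion} together with $r(y_j)>0$ forces $r(x_j)>0$, after which you take logarithms, exactly as in the paper.
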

	\begin{proof}
		Assume (D1) and suppose that \eqref{eq:radiuscriterion} fails for some
		$\eps>0$ for every choice of $\delta,V_\eps$. We may choose a strictly
		decreasing sequence $t_j\to0$ and points $x_j\in\Om$ satisfying
		\[
		\dist(x_j,B)<1/j,\qquad
		y_j=\Phi_{t_j}(x_j)\in\clOm,\qquad
		r(y_j)>e^\eps r(x_j).
		\] Passing to a subsequence gives $x_j\to a\in B$.
		The strict inequality implies $r(y_j)>0$, so $\ph(y_j)$ is finite. If
		$r(x_j)>0$, it is equivalent to
		\[
		\ph(x_j)-\ph(y_j)>\eps.
		\] If $r(x_j)=0$, the latter difference is
		$+\infty$. Both alternatives contradict (D1).
		
		Conversely, consider the sequences in (D1). For each fixed $\eps>0$,
		their points eventually satisfy \eqref{eq:radiuscriterion}. Since
		$r(y_j)>0$, this also forces $r(x_j)>0$ for all sufficiently large $j$.
		Taking logarithms gives $\ph(x_j)-\ph(y_j)\le\eps$ eventually. Let
		$\eps\downarrow0$ to obtain (D1). This proves the equivalence.
	\end{proof}
	
	\begin{lemma}\label{lem:compactcuts}
	Let $\Phi_t$ be as in Definition~\ref{def:deformation} and set
	\[
	D_t\coloneqq\Phi_t(\Om).
	\]
	Then:
	\begin{enumerate}[label=\textup{(\roman*)}]
		\item for every $L\Subset\Om$,
		\[
		L\subset D_t
		\]
		for all sufficiently small $t$;
		\item for every relative open neighbourhood $W$ of $S_\Phi$ in $\clOm$,
		\[
		\partial D_t\cap\clOm\subset W
		\]
		for all sufficiently small $t$;
		\item if $\partial D_t\cap\clOm\subset W$, then
		\[
		L_t\coloneqq(\clOm\setminus W)\cap D_t\Subset D_t.
		\]
		Moreover, for every open set $Q$ satisfying
		\[
		L_t\subset Q\Subset D_t,
		\]
		one has
		\[
		\partial Q\cap\clOm\subset W.
		\]
	\end{enumerate}
	No connectedness, pseudoconvexity or smoothness of $Q$ is required.
\end{lemma}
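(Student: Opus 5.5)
The plan is to treat the three assertions as elementary point-set consequences of Definition~\ref{def:deformation} and Definition~\ref{def:cluster}, using only that $\Phi_t\to\mathrm{id}$ locally uniformly and that each $\Phi_t$ is a homeomorphism of $N$ onto an open set.

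For \textup{(i)}, fix $L\Subset\Om$ and choose an open $L'$ with $L\subset L'\Subset\Om$. Set $\eta\coloneqq\dist(L,\partial L')>0$.
\begin{itemize}
\item For small $t$ we have $|\Phi_t-\mathrm{id}|<\eta$ on $\overline{L'}$. Hence $\Phi_t(\partial L')$ avoids $L$.
\item Fix $z\in L$ and consider the degree of $\Phi_s|_{\overline{L'}}$ at $z$ for $s\in[0,t]$. The homotopy $(s,x)\mapsto\Phi_s(x)$ never takes the value $z$ on $\partial L'$, so the degree is constant in $s$.
\item At $s=0$ the degree equals $1$. Hence $z\in\Phi_t(L')\subset D_t$.
\end{itemize}
A simpler alternative is available: $\Phi_t(L')$ is open and connected, and $\Phi_t(\partial L')$ separates nothing from $L$. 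The degree argument is the cleanest, and since holomorphic maps preserve orientation, no sign issue arises.

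For \textup{(ii)}, argue by contradiction. Suppose there are $t_j\downarrow0$ and $y_j\in\partial D_t\cap\clOm\setminus W$.
\begin{itemize}
\item Since $\Phi_{t_j}$ is a homeomorphism of $N$ onto an open set containing $\clOm$ for small $t$, we have $\partial D_{t_j}=\Phi_{t_j}(\partial\Om)$. This follows because $\overline{\Om}$ is compact in $N$, so $\Phi_t(\clOm)$ is compact and equals $\overline{D_t}$.
\item Thus $y_j\in\clOm\cap\Phi_{t_j}(\partial\Om)$.
\item By compactness of $\clOm\setminus W$, a subsequence converges to some $\xi\notin W$. By \eqref{eq:cluster}, $\xi\in S_\Phi\subset W$, a contradiction.
\end{itemize}
This also confirms the remark that $S_\Phi$ is compact and lies in $\partial\Om$. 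The limit lies in $\partial\Om$ because $y_j=\Phi_{t_j}(x_j)$ with $x_j\in\partial\Om$ and $|y_j-x_j|\to0$.

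For \textup{(iii)}, $L_t$ is closed in $\clOm$, hence compact, and $L_t\subset D_t$ by definition. Note that $W$ is relatively open, so $\clOm\setminus W$ is compact. It then suffices to check $\overline{L_t}=L_t\subset D_t$, which gives $L_t\Subset D_t$.

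For the second claim, take $Q$ open with $L_t\subset Q\Subset D_t$, and let $p\in\partial Q\cap\clOm$.
\begin{itemize}
\item Since $\overline Q\subset D_t$, we have $p\in D_t$.
\item $p\notin Q$ because $Q$ is open, so $p\notin L_t$.
\item Since $p\in\clOm\cap D_t$ but $p\notin(\clOm\setminus W)\cap D_t$, we conclude $p\in W$.
\end{itemize}
This uses no connectedness or regularity of $Q$.

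The main obstacle is the identification $\partial D_t=\Phi_t(\partial\Om)$ and the surjectivity in \textup{(i)}. Both rely on $\Phi_t$ being a homeomorphism on the fixed neighbourhood $N$ together with uniform closeness to the identity. The degree argument handles \textup{(i)}, and invariance of domain handles the boundary identification.
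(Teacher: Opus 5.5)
\textbf{The gap is in the first claim of (iii).} This is exactly where the lemma's hypothesis is needed. You assert that $L_t=(\clOm\setminus W)\cap D_t$ is closed, and the only reason you give is that $\clOm\setminus W$ is compact. But the intersection of a compact set with the open set $D_t$ need not be closed, and you never use the hypothesis $\partial D_t\cap\clOm\subset W$. Your reasoning would apply verbatim to $W=\varnothing$, and there $L_t=\clOm\cap D_t$ fails to be closed as soon as $\partial D_t$ meets $\Om$.

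The missing step is short. Let $x$ be a limit of points of $L_t$; then $x\in(\clOm\setminus W)\cap\overline{D_t}$. If $x\notin D_t$, then $x\in\partial D_t\cap\clOm$ and $x\notin W$, which contradicts $\partial D_t\cap\clOm\subset W$. Hence $x\in L_t$. So $L_t$ is a compact subset of the open set $D_t$, i.e.\ $L_t\Subset D_t$; this is the paper's argument. Your proof of the second claim of (iii) is correct as written. In fact it uses only $L_t\subset Q$ and $\overline Q\subset D_t$.

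\textbf{Parts (i) and (ii) are sound.} Part (ii) matches the paper, including your justification of $\partial D_t=\Phi_t(\partial\Om)$ via compactness of $\Phi_t(\clOm)$ and injectivity.

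Part (i) takes a different route from the paper. The paper uses Cauchy estimates to get $D\Phi_t\to I$, then solves $\Phi_t(x)=y$ by the contraction $x\mapsto y-(\Phi_t(x)-x)$ on $\overline{B(y,a)}$. That relies on holomorphy, but it also gives a unique preimage near each $y$. Your Brouwer-degree homotopy needs only continuity and uniform closeness to the identity. The straight-line homotopy $(1-s)x+s\Phi_t(x)$ would even remove the need for joint continuity in $t$. Your orientation remark is unnecessary, since the degree is $1$ by homotopy to the identity. The ``simpler alternative'' you mention is not an argument as stated ($L'$ need not be connected), so rely on the degree argument.
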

	\begin{proof}
		For (i), choose a common radius $a>0$ so that the balls of radius $2a$
		about points of $L$ lie in $\Om$. Uniform convergence of $\Phi_t$ to the
		identity on a slightly larger compact set, and Cauchy's estimates, give
		uniform convergence of their complex derivatives to the identity there.
		For $y\in L$ and small $t$, the map
		\[
		T_{t,y}(x)\coloneqq y-(\Phi_t(x)-x)
		\]
		is a contraction of the closed ball $\overline{B(y,a)}$ into itself.
		Its fixed point belongs to $\Om$ and solves $\Phi_t(x)=y$. The choices are
		uniform for $y\in L$, proving (i).
		
		If (ii) fails, choose $t_j\downarrow0$ and
		$y_j\in(\partial D_{t_j}\cap\clOm)\setminus W$. A convergent subsequence
		has a limit in $S_\Phi$. Since $\clOm\setminus W$ is closed in the compact
		set $\clOm$, its limit also lies outside $W$, a contradiction. The identity
		$\partial D_t=\Phi_t(\partial\Om)$ follows from the fact that the maps
		are homeomorphisms on a neighbourhood of the closure.
		
		For (iii), any limit of points of $L_t$ lies in
		$(\clOm\setminus W)\cap\overline{D_t}$. Such a limit cannot lie in
		$\partial D_t$, by the assumed boundary inclusion. Hence $L_t$ is closed
		and bounded, and is contained in the open set $D_t$. If
		$x\in\partial Q\cap\clOm$ were outside $W$, then
		$x\in(\clOm\setminus W)\cap D_t=L_t\subset Q$, which is impossible.
		This proves the last assertion.
	\end{proof}

	The preceding lemma supplies the geometric cut. We now record the analytic
	regularization facts that will be applied on that compact piece.

	\begin{lemma}\label{lem:regularization}
	The following two facts will be used.
	\begin{enumerate}[label=\textup{(\roman*)}]
		\item Let
		\[
		u\in\PSH(D)\cap L^\infty_{\mathrm{loc}}(D),
		\qquad L\Subset D.
		\]
		There are smooth plurisubharmonic functions $u_\nu$ on a fixed
		neighbourhood of $L$ such that
		\[
		u_\nu\downarrow u\qquad\text{on }L.
		\]
		If $\varphi:L\to(-\infty,+\infty]$ is continuous and
		\[
		u\le\varphi+\eta\qquad\text{on }L,
		\]
		then, for every $\delta>0$,
		\[
		u_\nu\le\varphi+\eta+\delta\qquad\text{on }L
		\]
		for all sufficiently large $\nu$.
		\item Let $u$ be locally bounded and plurisubharmonic near $z$, and let
		$\gamma$ be a real-analytic curve with $\gamma(0)=z$ admitting a holomorphic
		complexification near $0$. Then
		\[
		u(z)=\limsup_{t\downarrow0}u(\gamma(t)).
		\]
	\end{enumerate}
\end{lemma}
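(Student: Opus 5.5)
For part (i) I will use standard convolution regularization on a slightly larger compact set. Choose $r>0$ with $L_{3r}\coloneqq\{z:\dist(z,L)\le 3r\}\subset D$. Put $u_\nu\coloneqq u*\chi_{1/\nu}$ for $1/\nu<r$, where $\chi_\eps$ is a radial mollifier. These functions are smooth and plurisubharmonic on the fixed neighbourhood $\{\dist(\cdot,L)<r\}$ of $L$. Local boundedness of $u$ guarantees $u\in L^1_{\mathrm{loc}}$, so the convolution is defined. For radial mollifiers, $u*\chi_\eps$ decreases as $\eps\downarrow0$, since it is an average of the increasing sphere means. It converges pointwise to $u$ by upper semicontinuity together with the sub-mean value property. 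This gives $u_\nu\downarrow u$ on $L$.

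For the second assertion I will use a Dini-type argument for the upper semicontinuous functions $u_\nu-\varphi$. Here $\varphi$ may take the value $+\infty$, and $u_\nu-\varphi$ is then understood as $-\infty$ at such points. Consider the continuous map $\psi\coloneqq\varphi+\eta+\delta$ into $(-\infty,+\infty]$. Suppose the conclusion fails. Then there are $\nu_k\to\infty$ and $z_k\in L$ with $u_{\nu_k}(z_k)>\psi(z_k)$, and we may pass to a subsequence with $z_k\to z\in L$. Fix $\mu$. Monotonicity gives $u_{\nu_k}\le u_\mu$ for $\nu_k\ge\mu$, so continuity of $u_\mu$ and of $\psi$ yields $u_\mu(z)\ge\psi(z)$. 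If $\varphi(z)=+\infty$, this already contradicts the finiteness of the smooth function $u_\mu$. Otherwise, letting $\mu\to\infty$ gives $u(z)\ge\varphi(z)+\eta+\delta$, which contradicts $u\le\varphi+\eta$ at $z$. The only delicate point is the extended-real value $+\infty$, and it is handled exactly as described.

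For part (ii), write $\gamma(t)=\sum_k a_kt^k$ for real $t$ near $0$, and let $\gamma(\zeta)$ denote the holomorphic complexification on a disc $|\zeta|<\rho$. The easy inequality $\limsup_{t\downarrow0}u(\gamma(t))\le u(z)$ follows from upper semicontinuity. For the reverse inequality, first suppose $\gamma$ is nonconstant. Then $s(\zeta)\coloneqq u(\gamma(\zeta))$ is subharmonic on the disc; it is not $\equiv-\infty$ near $0$ because $u$ is bounded near $z$, and $s(0)=u(z)$. The main obstacle is that we only know $s$ along one real ray, and a subharmonic function can be smaller along a ray than at the centre. I would exploit that the limsup along radial segments equals the value at the centre in the "fine" sense only with an exceptional polar set of directions, so the ray must be avoided. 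The better route is therefore to reparametrize: for a nonconstant $\gamma$ with first nonzero coefficient $a_m$, the map $\gamma$ is locally $m$-to-one onto an analytic curve. Hence $u(z)=\limsup_{\zeta\to0,\zeta\neq0}s(\zeta)$, where $0$ is not thin because the singleton is not thin for subharmonic functions on a disc. What remains is to pass from the full punctured neighbourhood to the ray $\zeta=t>0$. I would do this using the local boundedness of $u$ together with the fact that $s$ restricted to small circles has mean value $\ge s(0)$. Combined with upper semicontinuity, this should yield points arbitrarily close to the positive axis where $s>u(z)-\eps$. I expect this step to be the hardest, and it may require additional hypotheses to hold in general. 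If $\gamma$ is constant, the claim $u(z)=u(z)$ is trivial.
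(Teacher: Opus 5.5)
Part (i) is correct and follows the paper's argument: radial mollification, monotonicity from the sphere means, and a Dini-type compactness step. The paper uses the decreasing compact sets $E_\nu=\{x\in L:u_\nu(x)\ge\varphi(x)+\eta+\delta\}$, which automatically exclude the points where $\varphi=+\infty$; your sequential version handles that case equivalently.

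Part (ii) has a genuine gap. You never establish $s(0)\le\limsup_{t\downarrow0}s(t)$ for $s=u\circ\gamma$, and the premise that led you away from the ray is false. In one complex variable there are no exceptional directions. A segment $(0,r)$ is a connected set with more than one point, hence non-thin at its endpoint; equivalently, Wiener's criterion applies because a segment of length $\ell$ has logarithmic capacity $\ell/4$. Non-thinness of a set $E$ at $0$ means precisely that $\limsup_{E\ni\zeta\to0}s(\zeta)=s(0)$ for every subharmonic $s$ near $0$. This is exactly the paper's proof, and it needs no additional hypotheses. Local boundedness only rules out $s\equiv-\infty$, where the identity is trivial anyway. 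The phenomenon you have in mind does occur for subharmonic functions on $\R^N$ with $N\ge3$, where a segment is polar and hence thin at each of its points, but not on $\R^2\simeq\C$.

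Your proposed workaround cannot replace this input. The identity on the punctured neighbourhood is just the sub-mean value property. Upper semicontinuity only propagates upper bounds, so knowing that $s>u(z)-\eps$ at points near the positive axis says nothing about the values on the axis itself.

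What is missing is a scale-invariant lower bound for the harmonic measure of the slit. Suppose $s(0)>-\infty$ and $s\le s(0)-\eps$ on $(0,r)$. Compare $s$ on the slit disc $\{|\zeta|<r\}\setminus[0,r)$ with $M_r-\eps\,\omega_r$, where $M_r\coloneqq\max_{|\zeta|\le r}s\ge s(0)$ and $\omega_r$ is the harmonic measure of the slit. The exceptional boundary points are polar and $s$ is bounded above, so the extended maximum principle gives $s\le M_r-\eps\,\omega_r$ there. By scaling, the mean of $\omega_r$ over $|\zeta|=r/2$ is a constant $c>0$ independent of $r$, so the sub-mean value inequality yields $s(0)\le M_r-c\eps$. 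Since $M_r\to s(0)$ as $r\downarrow0$ by upper semicontinuity, this is a contradiction.
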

	\begin{proof}
		For (i), convolve with a nonnegative smooth radial approximate identity on
		slightly smaller open sets. Radial means of a subharmonic function are
		nondecreasing with the radius, so radii decreasing to zero give the indicated
		monotonicity. To obtain the uniform upper estimate, consider
		\[
		E_\nu\coloneqq\{x\in L:u_\nu(x)\ge\varphi(x)+\eta+\delta\}.
		\] Since $\varphi$ is
		continuous with values in $(-\infty,+\infty]$, each $E_\nu$ is compact,
		and points where $\varphi=+\infty$ do not belong to it. The sets $E_\nu$
		decrease with $\nu$ and have empty intersection because
		$u_\nu\downarrow u\le\varphi+\eta$. Hence $E_\nu=\varnothing$ for all large
		$\nu$.
		For (ii), composition with the holomorphic complexification gives a
		finite subharmonic function near $0$. The assertion is immediate for a
		constant curve. A real interval is not thin at its endpoint. The
		one-variable characterization of thinness therefore gives the equality;
		see \cite[Chapter 4]{Kl}. Upper semicontinuity supplies the opposite
		inequality as well.
	\end{proof}

	Regularization handles the local construction. We now turn to the
	holomorphic-hull side of the argument. The following
	coefficient estimate gives the basic disk-fibre inclusion.

	\begin{lemma}\label{lem:Cauchyhull}
	Let $D\Subset U$, where $U$ is pseudoconvex.
	\begin{enumerate}[label=\textup{(\roman*)}]
		\item For $f\in C(\partial D)$, put
		\[
		L_f\coloneqq\{(z,w):z\in\partial D,\ |w|\le e^{-f(z)}\}.
		\]
		Then
		\[
		\{(z,w):z\in D,\ |w|\le e^{-\PB_{f,D}(z)}\}
		\subset\hull{L_f}{U\times\C}.
		\]
		\item Suppose $\psi:\overline D\to(-\infty,+\infty]$ is continuous and
		bounded below, and $G_{\psi,D}$ is finite. Put
		\[
		K_\psi\coloneqq\{(z,w):z\in\overline D,\ |w|\le e^{-\psi(z)}\}.
		\]
		Then
		\[
		\{(z,w):z\in D,\ |w|\le e^{-G_{\psi,D}(z)}\}
		\subset\hull{K_\psi}{U\times\C}.
		\]
	\end{enumerate}
\end{lemma}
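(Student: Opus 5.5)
The plan follows the classical coefficient-and-powers argument of \cite[Lemmas 2.5--2.6]{DiGreen}. We prove (i) first and then deduce (ii) from it by boundary approximation.

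For (i), fix $(z_0,w_0)$ with $z_0\in D$ and $|w_0|\le e^{-\PB_{f,D}(z_0)}$. Let $h\in\OO(U\times\C)$, and expand it in the fibre variable as
\[
h(z,w)=\sum_{k\ge0}a_k(z)w^k,\qquad a_k\in\OO(U).
\]
Put $M\coloneqq\max_{L_f}|h|$.

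\begin{enumerate}[label=\textup{(\alph*)}]
\item For $z\in\partial D$, apply Cauchy's estimate on the circle $|w|=e^{-f(z)}$. This gives $|a_k(z)|\le M e^{kf(z)}$.
\item If $M=0$, the coefficients vanish on $\partial D$, hence on $D$ by the maximum principle, and $h(z_0,\cdot)\equiv0$. Otherwise the function
\[
v_k\coloneqq\tfrac1k\log|a_k|-\tfrac1k\log M
\]
is plurisubharmonic on $D$, and $v_k^*\le f$ on $\partial D$ because $a_k$ is continuous up to $\partial D$. Hence $v_k\le\PB_{f,D}$, that is,
\[
|a_k(z_0)|\le M e^{k\PB_{f,D}(z_0)}.
\]
\item This bounds each term of the series by $|a_k(z_0)w_0^k|\le M$. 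To pass from termwise bounds to $|h(z_0,w_0)|\le M$, apply the same estimate to the powers $h^N$, which are again holomorphic on $U\times\C$. The coefficients of $h^N(z_0,\cdot)$ are bounded by $M^N e^{k\PB(z_0)}$.
\item For $|w|<|w_0|$, summing these bounds gives $|h(z_0,w)|^N\le M^N/(1-|w/w_0|)$. Take $N$-th roots and let $N\to\infty$ to get $|h(z_0,w)|\le M$. By continuity in $w$ this extends to $w=w_0$.
\end{enumerate}

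This proves (i). The case $\PB_{f,D}(z_0)=-\infty$ does not arise, since $\PB_{f,D}\ge\min f$. Also, points of $\hull{L_f}{U\times\C}$ automatically lie in $U\times\C$, so no separate membership check is needed.

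For (ii), the obstacle $\psi$ is defined on $\overline D$. The most direct route mirrors (i) on all of $\overline D$, using the closed set $K_\psi$ in place of $L_f$.

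\begin{enumerate}[label=\textup{(\alph*)}]
\item For every $z\in\overline D$ with $\psi(z)<\infty$, Cauchy's estimate on $|w|=e^{-\psi(z)}$ gives $|a_k(z)|\le M e^{k\psi(z)}$. Where $\psi=+\infty$ the estimate is trivial. Hence $v_k=\tfrac1k\log(|a_k|/M)\le\psi$ on $D$.
\item Since $v_k$ is an admissible competitor in the definition of $G_{\psi,D}$, we get $v_k\le G_{\psi,D}$. This yields $|a_k(z_0)|\le M e^{kG_{\psi,D}(z_0)}$, where $G_{\psi,D}(z_0)$ is finite by hypothesis.
\item The same powers argument as in (i) then gives $|h(z_0,w_0)|\le M$ whenever $|w_0|\le e^{-G_{\psi,D}(z_0)}$.
\end{enumerate}

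Note that (ii) needs no superharmonicity of $\psi$ and no maximality. Only the defining supremum and $v_k\le\psi$ are used.

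The main obstacle is the passage from coefficient bounds to the bound for $h$ itself, namely the powers trick with the geometric-series tail. Some care is also needed when $\psi=+\infty$: there the fibre of $K_\psi$ is $\{0\}$, which gives only $|a_0|\le M$, but this is exactly what $v_k\le\psi=+\infty$ requires. A minor point is the boundary upper limit $v_k^*\le f$ in (i). It uses continuity of $a_k$ on $U\supset\overline D$, so $\limsup_{z\to\xi}v_k(z)=v_k(\xi)\le f(\xi)$.
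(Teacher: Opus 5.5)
Your argument is correct and is essentially the paper's proof: Cauchy estimates on the fibre circles, $\frac1k\log(|a_k|/M)$ as a competitor for $\PB_{f,D}$ (resp.\ $G_{\psi,D}$), and the powers trick with the geometric series, with (ii) done directly rather than by the boundary approximation announced in your opening line. The only small omissions are easily filled. In (i) you need $|a_0|\le M$ on $D$, which follows from the maximum principle applied to $h(\cdot,0)$. In (ii) the case $M=0$ is not covered, and the paper handles it by dividing by $M+\delta$ and letting $\delta\to0$.
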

	\begin{proof}
		The proof is the coefficient-and-powers argument of
		\cite[Lemma 2.5]{DiGreen}. Let $H\in\OO(U\times\C)$ with
		$\|H\|_{L_f}\le1$ and write
		$H(z,w)=\sum_{m=0}^{\infty}H_m(z)w^m$. Cauchy's estimate gives
		\[
		|H_m(z)|\le e^{mf(z)}\quad(z\in\partial D,\ m\ge1),
		\]
		and therefore
		\[
		m^{-1}\log|H_m|\le\PB_{f,D}\quad\text{on }D.
		\]
		Also $|H_0|\le1$ on $D$.
		If $r\coloneqq|w|e^{\PB_{f,D}(z)}<1$, then
		\[
		|H(z,w)|\le\sum_{m=0}^{\infty}r^m=(1-r)^{-1}.
		\]
		Apply the same inequality to $H^N$, whose norm on $L_f$ is at most one,
		and take $N$th roots. Letting $N\to\infty$ gives $|H(z,w)|\le1$.
		Closure gives the limiting fibre circle. Scaling $H$ (or first dividing by
		$\|H\|+\delta$) establishes the hull inclusion.
		
		For (ii), apply Cauchy's estimate over every point of $D$ where $\psi$ is finite.
		It gives $m^{-1}\log|H_m|\le\psi$ on $D$; at points where $\psi=+\infty$ the
		inequality is automatic. By the definition of the obstacle envelope, the
		left side is at most $G_{\psi,D}$. The zero section
		$\overline D\times\{0\}$ lies in $K_\psi$, so the normalization
		$\|H\|_{K_\psi}\le1$ also gives $|H_0|\le1$ on $D$.
		The same powers argument now proves (ii). Neither assertion requires
		holomorphic convexity of $\overline D$.
	\end{proof}

	For the reverse hull inclusion we use the standard agreement between
	holomorphic and plurisubharmonic hulls in a Stein domain.

	\begin{lemma}\label{lem:localhull}
	Let $K$ be a compact subset of a pseudoconvex domain $U$, and let $u$ be
	plurisubharmonic on a neighbourhood of $\hull K U$. Then
	\[
	u\le\sup_K u
	\qquad\text{on }\hull K U.
	\]
\end{lemma}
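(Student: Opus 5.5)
The approach rests on the classical identity between the holomorphic hull and the plurisubharmonic hull in a Stein domain. Since $U$ is pseudoconvex, there is a continuous (indeed smooth) strictly plurisubharmonic exhaustion of $U$. The Hörmander--Oka theory then gives
\[
\hull K U=\{z\in U:\ v(z)\le\sup_K v\ \text{for every }v\in\PSH(U)\}.
\]
The statement, however, only assumes $u$ plurisubharmonic on a neighbourhood $V$ of $\hull K U$, not on all of $U$. So the plan is to reduce to functions on $U$ by writing the hull as the intersection of compact holomorphically convex neighbourhoods, and then approximating $u$ there.

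Fix $\eps>0$ and set $c\coloneqq\sup_K u$, assumed finite; the case $c=-\infty$ is trivial after replacing $u$ by $\max(u,-N)$ and letting $N\to\infty$.

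First, since $\hull K U$ is compact and $\OO(U)$-convex, I would choose finitely many $h_1,\dots,h_m\in\OO(U)$ with $\|h_i\|_K<1$ such that
\[
P\coloneqq\{z\in \overline{U'}:|h_i(z)|\le1,\ i=1,\dots,m\}
\]
is a compact $\OO(U)$-convex polyhedron with $\hull K U\subset \mathrm{int}\,P$ and $P\subset V$. Here $U'\Subset U$ is a suitable sublevel set of the exhaustion; compactness of $\partial V$ together with the definition of the hull supplies the $h_i$.

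Second, I regularize. By Lemma~\ref{lem:regularization}(i), there are smooth plurisubharmonic $u_\nu\downarrow u$ on a neighbourhood of $P$. By Dini's theorem applied on the compact set $K$ (via the same $E_\nu$ argument with the upper semicontinuous limit), we get $u_\nu<c+\eps$ on $K$ for large $\nu$.

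Third, I replace $u_\nu$ by a global function on $U$. Since $P$ is $\OO(U)$-convex in a Stein domain, a continuous plurisubharmonic function near $P$ can be glued to a global one. Concretely, take $\chi\coloneqq M\max_i\log|h_i|$ together with an exhaustion term, and form
\[
\tilde u\coloneqq\max\bigl(u_\nu,\ \chi+c'\bigr)
\]
near $\partial P$, set $\tilde u\coloneqq\chi+c'$ outside $P$, and keep $\tilde u=u_\nu$ on a neighbourhood of $\hull K U$. To do this, choose $\chi<0$ on a neighbourhood of $\hull K U$ (possible after shrinking the polyhedron so that $|h_i|<1$ there) and $\chi+c'>u_\nu$ near $\partial P$, where $\max_i|h_i|$ is close to $1$. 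Replacing $h_i$ by powers $h_i^N$, I can make $\log\max|h_i|$ very negative on $K$. With $c'$ close to $c$, this arranges $\tilde u\le c+2\eps$ on $K$.

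Then $\tilde u\in\PSH(U)$, so by the plurisubharmonic-hull description, on $\hull K U$ we have
\[
u\le u_\nu=\tilde u\le\sup_K\tilde u\le c+2\eps .
\]
Letting $\eps\downarrow0$ gives the claim.

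The main obstacle is the gluing in the third step. The gluing function must dominate $u_\nu$ on a shell around $\partial P$ while staying below $c+\eps$ on $K$. Because $\hull K U$ may touch points where $\max_i|h_i|$ is near $1$, I would first insert an intermediate polyhedron $P'$ with $\hull K U\subset \mathrm{int}\,P'\subset P'\subset\mathrm{int}\,P$. I would then use powers $h_i^N$ so that $\log\max|h_i^N|$ jumps from very negative on $P'$ to about $0$ on $\partial P$. This is the standard Oka--Weil polyhedron trick.

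If that step becomes delicate, the alternative is to cite directly the local version of the hull identity: for $K$ compact in a Stein manifold, $\hull K U$ has a basis of Stein (Runge) neighbourhoods. Applying the global identity inside such a neighbourhood $V_0\subset V$, whose $\OO(V_0)$-hull of $K$ equals $\hull K U$ by the Runge property, finishes the proof immediately.
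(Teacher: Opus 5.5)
Your closing fallback is exactly the paper's proof. The paper takes a Stein (Runge) neighbourhood $V$ of $\hull K U$ inside the domain of $u$, notes $\hull K V=\hull K U$ by Oka--Weil on $\hull K U$, and applies the plurisubharmonic-hull identity in $V$.

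Your main route is genuinely different: you globalize $u$ to $\PSH(U)$ and use the identity only in $U$. It can be made to work, but Step~3 as written fails. Near the part of $\partial P$ where $\max_i|h_i|=1$ you have $\chi\approx0$. There $u_\nu$ may lie well above $c=\sup_K u$; for instance $U=\C$, $K$ the unit circle, $u=|z|^2$. So taking $c'$ close to $c$ is incompatible with $\chi+c'>u_\nu$ near $\partial P$.

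The fix is to reverse the order of choices:
\begin{enumerate}
\item Put $\omega\coloneqq\{\rho<a,\ |h_i|<1\ \text{for all } i\}$ and $\chi\coloneqq M\max\{\max_i\log|h_i|,\ \rho-a\}\in\PSH(U)$. Then $\chi\ge0$ on $\partial\omega\cap U$.
\item First take $c'\ge\max_{\overline\omega}u$. This gives the gluing condition on $\partial\omega$.
\item Only then take $M$ large, using $\max_K|h_i|<1$ and $\max_K\rho<a$, so that $\chi+c'\le c$ on $K$ (for finite $c$).
\end{enumerate}
Now define $\tilde u\coloneqq\max\{u,\chi+c'\}$ on $\omega$ and $\tilde u\coloneqq\chi+c'$ off $\omega$. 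This function lies in $\PSH(U)$, satisfies $\sup_K\tilde u\le c$, and has $\tilde u\ge u$ on $\omega\supset\hull K U$, which is all you need.

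Several pieces of your Step~3 then become unnecessary. You do not need $\tilde u=u_\nu$ near the hull, nor the intermediate polyhedron; indeed $\max_{\hull K U}|h_i|=\max_K|h_i|<1$ already. The regularization and Dini step can also go, since the gluing lemma only needs upper semicontinuity, and no $\eps$ is lost.

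As for what each route buys: the paper's argument is a two-line citation, but it rests on two inputs. These are the existence of Stein/Runge neighbourhood bases for $\OO(U)$-convex compacta, and Oka--Weil approximation. The repaired gluing argument needs only three things:
\begin{itemize}
\item H\"ormander's global identity $\hull K U=\{z\in U: v(z)\le\sup_K v\ \text{for all } v\in\PSH(U)\}$;
\item the definition of the hull, to place the polyhedron inside the domain of $u$;
\item the elementary plurisubharmonic gluing lemma.
\end{itemize}
In effect your analytic polyhedron $\omega$ is the Stein neighbourhood the paper invokes, and the gluing replaces the approximation step.
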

	\begin{proof}
		Since a pseudoconvex domain in $\C^n$ is Stein, holomorphic and
		plurisubharmonic hulls agree in $U$; see \cite[Chapter 4]{Ho}. Thus the
		defining holomorphic inequalities for $\hull K U$ are equivalent to the
		plurisubharmonic hull inequalities for plurisubharmonic functions defined
		on $U$. To apply this to the present $u$, which is only defined near
		$\hull K U$, choose a Runge Stein neighbourhood $V$ of the holomorphically
		convex compact set $\hull K U$ with
		$\hull K U\Subset V\Subset U$ and $V$ contained in the domain of $u$.
		Holomorphically convex compact sets in a Stein manifold admit a basis of
		Runge Stein neighbourhoods; see, for example, the Stein-neighbourhood and
		Oka--Weil results collected in \cite[Chapter~4]{Ho}. Oka--Weil approximation
		on $\hull K U$ shows that the holomorphic hull of $K$ computed in $V$ is still
		$\hull K U$. Applying the plurisubharmonic hull characterization in $V$
		gives the assertion; compare also \cite[Section~II]{DiGreen}.
	\end{proof}

	To pass from fixed disk bundles to varying ones, we also need a compactness
	principle for holomorphic hulls.

	\begin{lemma}\label{lem:hullcompactness}
	Let $K_j$ be compact subsets of one fixed compact set $L\Subset U$, where
	$U$ is pseudoconvex.
	\begin{enumerate}[label=\textup{(\roman*)}]
		\item If every cluster point of the sequence of sets $K_j$ belongs to a
		compact set $K\Subset U$, then every cluster point of $\hull{K_j}U$ belongs
		to $\hull K U$.
		\item If
		\[
		K_j\downarrow K,
		\]
		then
		\[
		\hull{K_j}U\downarrow\hull K U.
		\]
	\end{enumerate}
\end{lemma}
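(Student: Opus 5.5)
The plan is to argue directly from the definition of the hull by a limiting argument that uses the continuity of the functions $|h|$. Throughout, all the compact sets $K_j$, $K$ and their hulls lie in the fixed compact set $\hull L U$, which is compact in $U$ since $U$ is pseudoconvex, hence Stein.

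For (i), let $z$ be a cluster point of $\hull{K_j}U$. Choose a subsequence $j_\nu$ and points $z_\nu\in\hull{K_{j_\nu}}U$ with $z_\nu\to z$. The limit $z$ lies in $\hull L U\subset U$, since $K_j\subset L$ gives $\hull{K_j}U\subset\hull L U$.

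Fix $h\in\OO(U)$. For each $\nu$ choose $x_\nu\in K_{j_\nu}$ with $|h(z_\nu)|\le|h(x_\nu)|=\max_{K_{j_\nu}}|h|$. The points $x_\nu$ lie in the compact set $L$, so after passing to a further subsequence $x_\nu\to x$. By hypothesis $x\in K$; this is exactly where the assumption that every cluster point of the sequence of sets $K_j$ lies in $K$ is used.

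Continuity of $h$ then gives $|h(z)|=\lim|h(z_\nu)|\le\lim|h(x_\nu)|=|h(x)|\le\max_K|h|$. Since $h$ was arbitrary, $z\in\hull K U$. The only delicate point is interpreting "cluster point of the sets" as a limit of points $x_\nu\in K_{j_\nu}$ along some subsequence. The argument handles this by extracting subsequences twice: first for $z_\nu$, then for $x_\nu$.

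For (ii), the hull operation is monotone under inclusion, so $K_j\downarrow K$ gives $\hull{K_j}U\supset\hull{K_{j+1}}U\supset\hull K U$. Hence $\bigcap_j\hull{K_j}U\supset\hull K U$.

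For the reverse inclusion, take $z\in\bigcap_j\hull{K_j}U$. The constant sequence $z_j=z$ has $z$ as a cluster point of $\hull{K_j}U$. Any cluster point of the sets $K_j$ lies in every closed $K_m$, because the sets are nested. Hence it lies in $\bigcap_m K_m=K$. Part (i) then yields $z\in\hull K U$, which proves $\hull{K_j}U\downarrow\hull K U$.

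I expect no real obstacle. The one point to check is that $K$ is compact in $U$, which holds because it is a decreasing intersection of compact subsets of $L$.
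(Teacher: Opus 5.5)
Your proof is correct and is essentially the paper's argument. The paper likewise uses compactness of $\hull L U$, coming from holomorphic convexity of the Stein domain $U$, and then passes to the limit in the defining hull inequality via $\limsup_{j}\max_{K_j}|h|\le\max_K|h|$, which is exactly what your double subsequence extraction proves. The only difference is in (ii): the paper argues directly from $\max_{K_j}|h|\downarrow\max_K|h|$ rather than reducing to (i), and this is a purely cosmetic difference.
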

	\begin{proof}
		Since the pseudoconvex domain $U\subset\C^n$ is Stein, it is
		holomorphically convex; hence the hull $\hull L U$ of the compact set $L$
		is itself compact in $U$. All the hulls $\hull{K_j}U$ lie in this fixed
		compact set. For $H\in\OO(U)$,
		compactness and the cluster assumption give
		\[
		\limsup_{j\to\infty}\max_{K_j}|H|\le\max_K|H|.
		\] Passing to the limit in the defining
		hull inequality proves the first statement. For decreasing compacts,
		$\max_{K_j}|H|\downarrow\max_K|H|$, which proves the second statement.
	\end{proof}

	\section{The obstacle envelope and the continuity theorem}\label{sec:proofcontinuity}
	
	\begin{proof}[Proof of the preceding proposition]
		Since $u$ is locally bounded and $(\ddc u)^n=0$, the Bedford--Taylor
		criterion \cite{BT} implies that $u$ is maximal. After adding a constant, assume
		$u\ge0$. If a barrier $\theta$ exists at a point
		$\xi$ in the relative interior of $A_u$, choose a relative boundary
		neighbourhood $W$ of $\xi$ contained in $A_u$. For large $M$, the continuous
		plurisubharmonic function $v_M\coloneqq\max\{1+M\theta,0\}$ is zero near
		$\partial\Om\setminus W$ and is positive at some interior point near $\xi$.
		For every positive integer $N$,
		\[
		\limsup_{\Om\ni z\to\zeta}\bigl(Nv_M(z)-u(z)\bigr)\le0
		\quad(\zeta\in\partial\Om).
		\]
		Indeed $v_M=0$ at the relevant nonsingular boundary points, whereas $u\to+\infty$
		on $W$. Lemma~\ref{lem:comparison} gives $Nv_M\le u$ on $\Om$.
		At an interior point where $v_M>0$, this contradicts local boundedness of $u$
		as $N\to\infty$. The proposition follows.
	\end{proof}
	
	\begin{proof}[Proof of Theorem~\ref{thm:basic}]
		Write $G\coloneqq G_{\ph,\Om}$ and choose $m\le\inf_{\clOm}\ph$. Constants show that
		$G\ge m$. If $v\in\PSH(\Om)$ and $v\le\ph$, the submean inequality gives
		\begin{equation}\label{eq:meanbound}
			v(x)\le\frac{1}{|B(x,r)|}\int_{B(x,r)}\ph\,dV
			\quad\text{whenever }B(x,r)\Subset\Om.
		\end{equation}
		For $x$ in a fixed compact subset, choose one sufficiently small $r$ and
		bound the right side by the integral of $\ph^+$ on a slightly larger compact
		set divided by $|B(0,r)|$. It follows that all the admissible functions are
		locally uniformly bounded above. Hence $G^*$ is plurisubharmonic and locally
		bounded. At a finite point of $\ph$, continuity gives $G^*\le\ph$; on $A$ this
		is automatic. Thus $G^*$ is itself admissible and $G=G^*$.
		
		Let $D\Subset\Om$ and let $v\in\PSH(D)$ satisfy
		\[
		\limsup_{D\ni z\to\xi}\bigl(v(z)-G(z)\bigr)\le0
		\qquad(\xi\in\partial D).
		\]
		This difference inequality is the boundary condition in the definition of
		maximality; merely requiring $\limsup v(z)\le G(\xi)$ would not suffice
		before continuity of $G$ is known. Local boundedness of $G$ and
		compactness of $\partial D$ imply
		that $v$ is bounded above on $D$. Thus $v-\ph$ has nonpositive boundary
		upper limits at finite points of $\ph$ and tends to $-\infty$ at its
		infinite points. The subharmonic maximum principle gives $v\le\ph$ on $D$. By the standard plurisubharmonic pasting lemma, the function equal to
		$\max\{v,G\}$ on $D$ and to $G$ off $D$ is a plurisubharmonic minorant
		of $\ph$. By the definition of
		$G$ it cannot exceed $G$. Therefore $v\le G$ and $G$ is maximal.
		The Bedford--Taylor equivalence \cite{BT} proves $(\ddc G)^n=0$.
		
		The subharmonic function $G-\ph$ is nonpositive. If it is zero at an interior
		point, the strong maximum principle makes it identically zero, so that
		$\ph=G$ is plurisubharmonic. This proves (ii).
		
		For (iii), take $f_j=\min\{\ph,j\}$ on the boundary and fix a point
		$\xi$ admitting a barrier $\theta_\xi$. Given $j$ and $\eps>0$, put
		$c\coloneqq f_j(\xi)-\eps/2$. By continuity of $f_j$, one has $f_j>c$ near
		$\xi$ on the boundary. Since $\theta_\xi<0$ on the compact complement of a
		smaller boundary neighbourhood of $\xi$, a sufficiently large $M>0$ makes
		$u_{j,\xi}\coloneqq c+M\theta_\xi\le f_j$ on all of $\partial\Omega$. Thus
		$u_{j,\xi}\in\PSH(\Omega)\cap C(\overline\Omega)$ and
		$u_{j,\xi}(\xi)>f_j(\xi)-\eps$. Hence
		$u_{j,\xi}\le\PB_{f_j,\Omega}\le G$ by \eqref{eq:PBbound}.
		It follows that
		\[
		\liminf_{\Om\ni z\to\xi}G(z)\ge f_j(\xi).
		\]
		Let $j\to\infty$. The upper inequality follows from $G\le\ph$ and
		continuity of $\ph$, and is vacuous when $\ph(\xi)=+\infty$. Thus the full
		extended-real limit is $\ph(\xi)$, proving the theorem.
	\end{proof}
	
	We now begin the proof of Theorem~\ref{thm:continuity}. The first point is to
	transfer the obstacle inequality to one deformed domain while keeping every
	comparison on the fixed closure.
	\begin{lemma}\label{lem:deformedbound}
	Under the hypotheses of Theorem~\ref{thm:continuity}, put
	\[
	D_t\coloneqq\Phi_t(\Om),
	\qquad
	u_t\coloneqq G_{\ph,\Om}\circ\Phi_t^{-1}
	\quad\text{on }D_t.
	\]
	Then:
	\begin{enumerate}[label=\textup{(\roman*)}]
		\item for every $\eps>0$ and all sufficiently small $t>0$,
		\begin{equation}\label{eq:deformedbound}
			u_t\le\ph+\eps
			\qquad\text{on }D_t\cap\clOm;
		\end{equation}
		\item for every $z\in\Om$,
		\begin{equation}\label{eq:recovery}
			G_{\ph,\Om}(z)=\limsup_{t\downarrow0}u_t(z).
		\end{equation}
	\end{enumerate}
\end{lemma}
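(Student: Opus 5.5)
For (i) we argue by contradiction. Suppose that for some $\eps>0$ there are $t_j\downarrow0$ and points $y_j\in D_{t_j}\cap\clOm$ with $u_{t_j}(y_j)>\ph(y_j)+\eps$. In particular $\ph(y_j)<+\infty$, so $y_j\notin A$. Put $x_j\coloneqq\Phi_{t_j}^{-1}(y_j)\in\Om$. Since $G\coloneqq G_{\ph,\Om}\le\ph$, this gives
\[
\ph(x_j)-\ph(y_j)\ge G(x_j)-\ph(y_j)>\eps,
\]
and the difference is $+\infty$ if $x_j\in A$. By compactness we may assume $x_j\to a\in\clOm$. Since $\Phi_t\to\mathrm{id}$ uniformly on a compact neighbourhood of $\clOm$, we have $|y_j-x_j|\to0$, so $y_j\to a$ as well. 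We then split into three cases according to the position of $a$.

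\emph{Case $a\in B$.} This is exactly the configuration of (D1), and \eqref{eq:compatibility} contradicts the displayed inequality.

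\emph{Case $a\notin A$.} Then $\ph$ is finite and continuous near $a$, so $\ph(x_j)-\ph(y_j)\to0$. Because $G(x_j)\le\ph(x_j)$, this contradicts $G(x_j)-\ph(y_j)>\eps$.

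\emph{Case $a\in A\cap\Om$.} Here $\ph(y_j)\to+\infty$ by continuity in the extended sense. On the other hand, $G$ is locally bounded by Theorem~\ref{thm:basic}(i), so $G(x_j)$ stays bounded near the interior point $a$. Hence $G(x_j)-\ph(y_j)\to-\infty$, again a contradiction.

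These three cases exhaust $\clOm$, which proves (i). The only subtle point is to keep the extended-real bookkeeping straight. It matters that we only ever compare $G(x_j)$ with $\ph(y_j)$ and bound $G$ by $\ph$ at $x_j$, so that no expression of the form $\infty-\infty$ appears.

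For (ii), fix $z\in\Om$. By Lemma~\ref{lem:compactcuts}(i), $z\in D_t$ for small $t$, and $\gamma(t)\coloneqq\Phi_t^{-1}(z)$ is a curve in $\Om$ with $\gamma(0)=z$. Definition~\ref{def:deformation} says it is real analytic near $0$, so its power series converges for complex $t$ and gives a holomorphic complexification. Since $u_t(z)=G(\gamma(t))$ and $G$ is locally bounded and plurisubharmonic near $z$, Lemma~\ref{lem:regularization}(ii) yields $G(z)=\limsup_{t\downarrow0}G(\gamma(t))$, which is \eqref{eq:recovery}. The constant curve is covered by the same lemma.

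The main obstacle is (i) in the case $a\in B$. Hypothesis (D1) is stated only for sequences with $y_j\in\clOm\setminus A$ and $x_j\in\Om$, so we must check that the sequences extracted in the contradiction argument meet these requirements. They do: $x_j\in\Om$ by construction and $y_j\notin A$ as noted above. This also requires $\Phi_t$ to be defined on the fixed neighbourhood $N$, so that $\Phi_t^{-1}(y_j)$ is meaningful for $y_j\in D_t$.
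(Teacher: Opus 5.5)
Your proposal is correct and follows essentially the same route as the paper: the contradiction argument for (i) with the same three cases ($a\notin A$, $a\in A\cap\Om$, $a\in B$ via (D1)), and for (ii) Lemma~\ref{lem:compactcuts}(i) followed by Lemma~\ref{lem:regularization}(ii) applied along the complexified inverse trajectory. The one detail the paper makes explicit and you leave implicit is that the parameter disc must be shrunk so that the complexified trajectory stays in $\Om$, where $G_{\ph,\Om}$ is defined; this follows at once from continuity at $t=0$ and $z\in\Om$.
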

	\begin{proof}
		Suppose \eqref{eq:deformedbound} fails. There are $t_j\downarrow0$ and
		$y_j\in D_{t_j}\cap\clOm$ with
		\begin{equation}\label{eq:contradictbound}
			G_{\ph,\Om}(x_j)>\ph(y_j)+\eps,
			\qquad x_j\coloneqq\Phi_{t_j}^{-1}(y_j)\in\Om.
		\end{equation}
		The right side is finite. After taking a subsequence, compactness and
		uniform convergence of the maps give $x_j\to a\in\clOm$ and $y_j\to a$.
		If $a\notin A$, continuity at the finite value $\ph(a)$ implies
		$\ph(x_j)-\ph(y_j)\to0$. This contradicts
		$G_{\ph,\Om}(x_j)\le\ph(x_j)$ and \eqref{eq:contradictbound}.
		If $a\in A\cap\Om$, local boundedness of $G_{\ph,\Om}$ bounds the left
		side, whereas $\ph(y_j)\to+\infty$. This is again impossible.
		Finally, if $a\in A\cap\partial\Om$, the points in
		\eqref{eq:contradictbound} satisfy precisely (D1), including the possibility
		that some $x_j$ belong to $A$. Equation \eqref{eq:compatibility} contradicts
		\eqref{eq:contradictbound}. This proves \eqref{eq:deformedbound}.
		
		For \eqref{eq:recovery}, Lemma~\ref{lem:compactcuts}(i) first ensures
		that the fixed point $z$ belongs to $D_t$ for all sufficiently small $t$,
		so $u_t(z)$ is well defined. The inverse trajectory
		$t\mapsto\Phi_t^{-1}(z)$ then lies in $\Om$ and has a holomorphic
		complexification near zero. Since $z$ is an interior point and the
		complexified trajectory is continuous at $0$, after shrinking the parameter
		disc its image is still contained in $\Om$. Applying
		Lemma~\ref{lem:regularization}(ii) to $G_{\ph,\Om}$ gives
		\eqref{eq:recovery}.
	\end{proof}
	
	The preceding estimate becomes useful after cutting away the part of the moving
	boundary that meets $\clOm$. Combining this cut with the local barriers yields
	the continuous minorants that drive the rest of the proof.
	\begin{lemma}\label{lem:gluing}
	Under the hypotheses of Theorem~\ref{thm:continuity}, the following assertions
	hold.
	\begin{enumerate}[label=\textup{(\roman*)}]
		\item For every $z_0\in\Om$ and every $\eps>0$, there exists
		$v\in\cclass$ such that
		\begin{equation}\label{eq:pointminorant}
			v(z_0)>G_{\ph,\Om}(z_0)-4\eps.
		\end{equation}
		\item If all barriers at $S_\Phi$ are neighbourhood barriers, then the
		function $v$ in \textup{(i)} may be chosen in $\nclass$.
	\end{enumerate}
\end{lemma}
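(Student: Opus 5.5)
We follow the strategy announced after Theorem~\ref{thm:continuity}: transport the envelope by one deformation map, regularize it on a compact cut, and glue it to a barrier minorant by a finite maximum.

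\emph{Choice of the parameter and the barrier minorant.} Fix $z_0\in\Om$ and $\eps>0$, and write $G\coloneqq G_{\ph,\Om}$. By Lemma~\ref{lem:deformedbound}(ii) there are arbitrarily small $t>0$ with $z_0\in D_t$ and $u_t(z_0)>G(z_0)-\eps$. By (D2), $S_\Phi$ is a compact subset of $\partial\Om\setminus A$ all of whose points admit barriers. Lemma~\ref{lem:barrierminorants} therefore supplies $u_S\in\cclass$ (in $\nclass$ under (ii)) and a relative neighbourhood $W$ of $S_\Phi$ in $\clOm$ with $u_S>\ph-\eps$ on $W$. Moreover $\overline W\cap A=\varnothing$ and $z_0\notin\overline W$. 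We then shrink $t$, keeping the previous inequality, so that three things hold: Lemma~\ref{lem:compactcuts}(ii) gives $\partial D_t\cap\clOm\subset W$; Lemma~\ref{lem:deformedbound}(i) gives $u_t\le\ph+\eps$ on $D_t\cap\clOm$; and $z_0\in D_t$.

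\emph{Cut and regularization.} Put $L_t\coloneqq(\clOm\setminus W)\cap D_t$, which is compact in $D_t$ by Lemma~\ref{lem:compactcuts}(iii). Choose an open $Q$ with $L_t\subset Q\Subset D_t$ and $Q$ close to $L_t$. The function $u_t$ is plurisubharmonic and locally bounded on $D_t$, an open neighbourhood of $\overline Q$. Lemma~\ref{lem:regularization}(i), applied on $\overline Q\cap\clOm$ with $\eta=\eps$, gives a smooth plurisubharmonic $w$ near $\overline Q$ satisfying three properties: $w\ge u_t$, so $w(z_0)>G(z_0)-\eps$; $w\le\ph+2\eps$ on $\overline Q\cap\clOm$; and $w$ is finite-valued and continuous.

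\emph{Gluing.} Set $\tilde w\coloneqq w-3\eps$ and define $v\coloneqq\max\{\tilde w,u_S\}$ on $Q\cap\clOm$ and $v\coloneqq u_S$ elsewhere on $\clOm$. On $\partial Q\cap\clOm\subset W$ we need $\tilde w<u_S$ on a neighbourhood. There $u_S>\ph-\eps$ and $\tilde w\le\ph-\eps$, but this inequality is not strict, so this is the main obstacle. To fix it, use the margin $\tilde w=w-4\eps$, which yields $\tilde w\le \ph-2\eps<u_S-\eps$ near $\partial Q\cap\clOm$. The comparison uses continuity of $\ph$ and $u_S$ and $\overline W\cap A=\varnothing$, so that $\ph$ is finite there. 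By compactness, the strict inequality persists on a relative neighbourhood of $\partial Q\cap\clOm$, so the pasting lemma makes $v$ plurisubharmonic on $\Om$ and continuous on $\clOm$. Points of $\partial Q$ lying in $\clOm$ are handled as above. Points of $\partial Q$ lying outside $\clOm$ are irrelevant for $\cclass$, but must be handled for $\nclass$: there we paste on a neighbourhood of $\clOm$, shrinking $Q$ near $\partial Q\setminus\clOm$ by using continuity, because $\partial Q\cap\clOm\subset W$ gives room.

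\emph{Verification.} The obstacle bound $v\le\ph$ holds because $\tilde w\le\ph-2\eps$ and $u_S\le\ph$. At the base point, $v(z_0)\ge w(z_0)-4\eps>G(z_0)-5\eps$; renaming $\eps$, or running the argument with $\eps/2$, gives \eqref{eq:pointminorant}. In case (ii), $u_S\in\nclass$ and $w$ lives on an open set, so the same maximum is defined on a neighbourhood of $\clOm$.
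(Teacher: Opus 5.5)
Your proposal is correct and follows the paper's proof essentially step by step: a barrier minorant $u_S$ on $W$, one small $t$ chosen via Lemma~\ref{lem:deformedbound} and Lemma~\ref{lem:compactcuts}, regularization of $u_t$ on the compact cut, finite-maximum gluing, and a shrunken neighbourhood of $\clOm$ for (ii). The ``main obstacle'' you flag is not one, since $u_S>\ph-\eps$ holds strictly on $W$ and so the shift by $3\eps$ already gives $\widetilde w\le\ph-\eps<u_S$ on $\partial Q\cap\clOm$, which is how the paper gets the constant $4\eps$ directly; also, in (ii) the set to shrink is the neighbourhood $N_0$ of $\clOm$ (so that $\partial Q\cap N_0$ lies where the strict inequality holds), not $Q$ itself.
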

	\begin{proof}
		We first choose the barrier and fix one sufficiently small deformation.
		Apply Lemma~\ref{lem:barrierminorants} with $S=S_\Phi$. Choose
		$u_S\in\cclass$ and a relative open neighbourhood $W$ of $S_\Phi$ in
		$\clOm$ such that
		\begin{equation}\label{eq:bnearS}
			u_S>\ph-\eps\text{ on }W,\qquad
			\overline W\cap(A\cup\{z_0\})=\varnothing.
		\end{equation}
		Here and in this proof, the closure of $W$ is taken in $\clOm$.
		If $S_\Phi=\varnothing$, take $u_S=m\le\inf_{\clOm}\ph$ and
		$W=\varnothing$.
		
		Lemma~\ref{lem:compactcuts}(ii) gives, for all sufficiently small $t$,
		\begin{equation}\label{eq:boundaryinW}
			\partial D_t\cap\clOm\subset W.
		\end{equation}
		Using Lemma~\ref{lem:deformedbound}, choose a single such $t$ for which
		$z_0\in D_t$, $u_t\le\ph+\eps$ on $D_t\cap\clOm$, and
		$u_t(z_0)>G_{\ph,\Om}(z_0)-\eps$.
		
		Next we cut away the part of the moving boundary and regularize on the
		remaining compact piece.
		By Lemma~\ref{lem:compactcuts}(iii),
		\[
		L\coloneqq(\clOm\setminus W)\cap D_t
		\]
		is compactly contained in $D_t$ and contains $z_0$. Choose an open set
		$D$ with $L\subset D\Subset D_t$. Then
		\begin{equation}\label{eq:cutboundary}
			\Gamma\coloneqq\partial D\cap\clOm\subset W.
		\end{equation}
		No connectedness, smoothness or Runge property of $D$ is required.
		Choose a standard convolution regularization of $u_t$ on a fixed
		neighbourhood of $\overline D$. Applying
		Lemma~\ref{lem:regularization}(i) to control this regularization on the
		compact set $\overline D\cap\clOm$, choose a smooth plurisubharmonic
		function $\widetilde u$ with
		\[
		\widetilde u\ge u_t\text{ near }\overline D,\qquad
		\widetilde u\le\ph+2\eps\text{ on }\overline D\cap\clOm.
		\]
		
		We now glue the regularized function back to the barrier minorant on the
		original domain.
		Define
		\begin{equation}\label{eq:gluedfunction}
			v\coloneqq\begin{cases}
				\max\{u_S,\widetilde u-3\eps\},&\text{on }D\cap\clOm,\\
				u_S,&\text{on }\clOm\setminus D.
			\end{cases}
		\end{equation}
		On the compact gluing set $\Gamma$, equations \eqref{eq:bnearS} and
		\eqref{eq:cutboundary} imply
		\[
		\widetilde u-3\eps\le\ph-\eps<u_S.
		\]
		If $\Gamma\ne\varnothing$, continuity and compactness give
		$\min_\Gamma(u_S-\widetilde u+3\eps)>0$. Hence $v=u_S$ on a relative neighbourhood of
		$\Gamma$ in $\clOm$. If $\Gamma=\varnothing$, there is no gluing point
		on $\clOm$. In either case, \eqref{eq:gluedfunction} is continuous on $\clOm$ and
		plurisubharmonic on $\Om$. On $D\cap\clOm$ one has
		$\widetilde u-3\eps\le\ph-\eps$, while $u_S\le\ph$ on $\clOm$.
		Hence $v\le\ph$ on $\clOm$ and $v\in\cclass$. Moreover,
		\[
		v(z_0)\ge\widetilde u(z_0)-3\eps\ge u_t(z_0)-3\eps
		>G_{\ph,\Om}(z_0)-4\eps.
		\]
		This proves \eqref{eq:pointminorant}.
		
		If the barriers extend to a neighbourhood, the same construction can be
		carried out on a neighbourhood of the closure.
		Suppose the barriers at $S_\Phi$ are neighbourhood barriers. Then $u_S$ is
		continuous and plurisubharmonic on a neighbourhood of $\clOm$.
		The strict inequality on $\Gamma$ persists on an open neighbourhood of
		$\Gamma$ where both $u_S$ and $\widetilde u$ are defined. Since
		$\partial D\setminus O$ is compact and disjoint from $\clOm$ for any
		such neighbourhood $O$ of $\Gamma$, we may shrink a neighbourhood $N_0$
		of $\clOm$ so that $\partial D\cap N_0\subset O$.
		When $\Gamma=\varnothing$, choose $N_0$ disjoint from $\partial D$.
		The same formula, with $\clOm$ replaced by $N_0$, is therefore continuous
		and plurisubharmonic on $N_0$. Its restriction to $\clOm$ is the function
		already constructed, and hence belongs to $\nclass$. This proves the lemma.
	\end{proof}
	
	\begin{proof}[Proof of Theorem~\ref{thm:continuity}]
		Lemma~\ref{lem:gluing} proves \eqref{eq:density} and its neighbourhood
		version. Since the right side of \eqref{eq:density} is a supremum of
		continuous functions, $G_{\ph,\Om}$ is lower semicontinuous.
		Theorem~\ref{thm:basic} already gives upper semicontinuity, hence
		$G_{\ph,\Om}$ is continuous.

		It remains to consider increasing boundary data. Fix $v\in\cclass$ and
		$\delta>0$. The increasing open subsets
		\[
		\{\xi\in\partial\Om:f_j(\xi)>v(\xi)-\delta\}
		\]
		cover the compact boundary, including the points where $\ph=+\infty$.
		For all large $j$, $v-\delta\le f_j$ on the whole boundary, whence
		$\PB_{f_j,\Om}\ge v-\delta$ on $\Om$. Taking the supremum over $v$ and
		letting $\delta\downarrow0$, \eqref{eq:PBbound} gives pointwise convergence.
		To see local uniformity without assuming continuity of each Perron envelope,
		let $L\Subset\Om$. At each point of $L$, choose a continuous minorant within
		$\delta$ of $G_{\ph,\Om}$. Continuity of $G_{\ph,\Om}$ and a finite covering
		of $L$ show that a finite maximum $v\in\cclass$ satisfies
		$v>G_{\ph,\Om}-2\delta$ on $L$. The preceding boundary argument then gives
		\[
		G_{\ph,\Om}-3\delta<\PB_{f_j,\Om}\le G_{\ph,\Om}\quad\text{on }L
		\]
		for all large $j$. This completes the proof.
	\end{proof}
	
	The nonmonotone convergence argument can also be localized by a pluripolar
	perturbation. This gives a direct test at any interior point where a potential
	singular on the bad boundary set remains finite.
	
	\begin{lemma}\label{lem:directtest}
	Assume the continuous-minorant identity \eqref{eq:density} and continuity of
	$G=G_{\ph,\Om}$. Let $f_j\in C(\partial\Om)$ satisfy
	\[
	c\le f_j\le\ph,
	\qquad
	f_j\longrightarrow\ph
	\quad\text{locally uniformly on }\partial\Om\setminus B,
	\qquad
	B=A\cap\partial\Om.
	\]
	Suppose $\psi$ is plurisubharmonic on a connected neighbourhood of $\clOm$,
	$\psi\not\equiv-\infty$, and
	\[
	\psi=-\infty\qquad\text{on }B.
	\]
	If $z_0\in\Om$ and $\psi(z_0)>-\infty$, then for every $\eta>0$ there are
	$u\in\PSH(\Om)\cap C(\clOm)$ and an integer $j_0$ such that
	\[
	u\le f_j\quad\text{on }\partial\Om\quad(j\ge j_0),
	\qquad
	u(z_0)>G(z_0)-\eta.
	\]
\end{lemma}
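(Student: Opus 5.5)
We perturb a continuous minorant from \eqref{eq:density} by a small multiple of $\psi$. This pushes it far below $f_j$ near $B$, while costing almost nothing at $z_0$.

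\emph{Normalization and choice of $v$.} Since $\psi$ is plurisubharmonic on a neighbourhood of the compact set $\clOm$, it is bounded above there. Subtracting a constant, we assume $\psi\le0$ on $\clOm$. By \eqref{eq:density} we choose $v\in\cclass$ with $v(z_0)>G(z_0)-\eta/3$. For $\lambda>0$ we put
\[
u_{\lambda,\delta}\coloneqq\max\{v+\lambda\psi-\delta,\;c-1\}.
\]
This function is plurisubharmonic on $\Om$ and bounded. The maximum with $c-1$ restores a finite lower bound, but $u_{\lambda,\delta}$ need not be continuous on $\clOm$, since $\psi$ is only upper semicontinuous. To repair this we regularize $\psi$ from above. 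Let $\psi_k\downarrow\psi$ be continuous plurisubharmonic approximants on a fixed neighbourhood of $\clOm$, obtained by convolution as in Lemma~\ref{lem:regularization}(i). We replace $\psi$ by $\psi_k$ and use
\[
u\coloneqq\max\{v+\lambda\psi_k-\delta,\;c-1\}\in\PSH(\Om)\cap C(\clOm).
\]
Fix $\lambda>0$ so small that $\lambda|\psi(z_0)|<\eta/3$. Since $\psi_k\ge\psi$, we get $u(z_0)\ge v(z_0)-\lambda|\psi(z_0)|-\delta>G(z_0)-\eta$ once $\delta<\eta/3$. This requires $\psi(z_0)>-\infty$ and normalizes $\psi_k\le0$ after subtracting a uniform constant.

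\emph{Boundary inequality near $B$.} On the compact set $B$ we have $\psi=-\infty$. Because $\psi_k\downarrow\psi$ and $v$ is bounded above on $\clOm$, a Dini-type argument gives an open neighbourhood $V$ of $B$ and an index $k$ with $v+\lambda\psi_k-\delta<c-1$ on $V\cap\partial\Om$. The argument is the one used for the sets $E_\nu$ in Lemma~\ref{lem:regularization}(i). The sets $\{\xi\in B: v+\lambda\psi_k\ge c-1\}$ are compact and decreasing with empty intersection, so they are empty for large $k$, and the strict inequality then spreads to a neighbourhood $V$ of $B$ by continuity. On $V\cap\partial\Om$ this gives $u=c-1<c\le f_j$ for every $j$.

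\emph{Boundary inequality away from $B$.} On the compact set $\partial\Om\setminus V$, which lies in $\partial\Om\setminus B$, the obstacle $\ph$ is finite. There $f_j\to\ph$ uniformly by hypothesis, so $f_j\ge\ph-\delta$ for $j\ge j_0$. Since $v\le\ph$ and $\lambda\psi_k\le0$, we get $v+\lambda\psi_k-\delta\le\ph-\delta\le f_j$. Also $c-1<f_j$. Hence $u\le f_j$ on all of $\partial\Om$ for $j\ge j_0$.

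\emph{Main obstacle.} The delicate point is ordering the choices. We fix $\lambda$ first, using only $\psi(z_0)$, and $\delta$ next. Only then do we choose $k$ and $V$, using the $-\infty$ values of $\psi$ on $B$, and finally $j_0$. The value at $z_0$ is unaffected by $k$ because $\psi_k(z_0)\ge\psi(z_0)$. The connectedness hypothesis and $\psi\not\equiv-\infty$ only guarantee that $\psi$ is a genuine plurisubharmonic function with $\psi_k$ locally finite.
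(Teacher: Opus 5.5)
Your proposal is correct and follows essentially the same route as the paper. Both arguments perturb a continuous minorant $v\in\cclass$ by $\lambda\psi_k$, with $\psi_k\le0$ a smooth decreasing regularization, then use compactness of $B$ and $\psi_k\downarrow-\infty$ there to push the function below $c$ on a boundary neighbourhood of $B$, and finally use uniform convergence of $f_j$ on the compact complement; your extra $\max\{\cdot,c-1\}$ is harmless but unnecessary, since the perturbed function is already below $c-1$ near $B$.
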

	\begin{proof}
		Normalize $\psi$ so that smooth decreasing regularizations $\psi_k$ on a
		fixed neighbourhood of $\clOm$ satisfy $\psi_k\le0$ there. Choose
		$v\in\cclass$ with $v(z_0)>G(z_0)-\eta/4$, and choose $\lambda>0$ so small
		that $\lambda\psi(z_0)>-\eta/4$. Since $\psi_k\downarrow-\infty$ on the
		compact set $B$, some fixed $k$ satisfies
		\[
		v+\lambda\psi_k<c-1\quad\text{on }B.
		\]
		The compactness justification is elementary: for any prescribed negative
		level, the open sets where $\psi_k$ is below that level increase and cover
		$B$. A finite subcover gives one value of $k$ valid on all of $B$.
		
		Set $u\coloneqq v+\lambda\psi_k-\eta/4$. By continuity, $u<c$ on a relative open
		neighbourhood $O$ of $B$ in $\partial\Om$. On the compact complement,
		$u\le v-\eta/4\le\ph-\eta/4$. Local uniform convergence of the boundary
		data therefore gives $u\le f_j$ on $\partial\Om\setminus O$ for all large
		$j$. On $O$ the same inequality follows from $f_j\ge c$. At the interior
		point, $\psi_k(z_0)\ge\psi(z_0)$ gives
		\[
		u(z_0)>G(z_0)-3\eta/4.
		\]
		This proves the assertion.
	\end{proof}
	
	The preceding perturbation naturally singles out the interior points that cannot
	be separated from the singular boundary set by a neighbourhood
	plurisubharmonic potential. We record this set explicitly.
	Let $\mathcal P_B$ consist of the functions $\psi$
	plurisubharmonic on a connected neighbourhood of $\clOm$, not identically
	$-\infty$, and satisfying $\psi|_B=-\infty$. Assume this class is nonempty,
	and put
	\begin{equation}\label{eq:polarhull}
		\mathcal H_{\Om}(B)\coloneqq
		\{z\in\Om:\psi(z)=-\infty\text{ for every }\psi\in\mathcal P_B\}.
	\end{equation}
	This hull uses potentials on neighbourhoods of $\clOm$; it is not being
	identified with the domain-relative pluripolar hull in \cite{DLH2026}.
	
	\begin{corollary}\label{cor:polarhull}
		Assume \eqref{eq:standing}, the continuous-minorant identity
		\eqref{eq:density}, and continuity of $G=G_{\ph,\Om}$.
		Let $B\coloneqq A\cap\partial\Om$, assume $\mathcal P_B\ne\varnothing$, and let
		$f_j\in C(\partial\Om)$ satisfy $c\le f_j\le\ph$ for a fixed real number
		$c$, with $f_j\to\ph$ locally uniformly on $\partial\Om\setminus B$.
		Then $\PB_{f_j,\Om}\to G$ uniformly on every compact subset of
		$\Om\setminus\mathcal H_{\Om}(B)$.
	\end{corollary}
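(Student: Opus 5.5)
The plan is to combine the upper bound \eqref{eq:PBbound} with a uniform lower bound coming from Lemma~\ref{lem:directtest}, made locally uniform by compactness and by the continuity of $G$.

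\emph{Upper bound.} Since $f_j\le\ph$ on $\partial\Om$, Lemma~\ref{lem:comparison}(ii) gives $\PB_{f_j,\Om}\le G$ on $\Om$ for every $j$. So the whole task is the lower estimate: for a compact $L\subset\Om\setminus\mathcal H_\Om(B)$ and $\eta>0$, we need
\[
\PB_{f_j,\Om}>G-\eta\quad\text{on }L\qquad\text{for all large }j.
\]

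\emph{Pointwise lower bound.} Fix $z_0\in L$. Because $z_0\notin\mathcal H_\Om(B)$, the definition \eqref{eq:polarhull} provides some $\psi\in\mathcal P_B$ with $\psi(z_0)>-\infty$. Lemma~\ref{lem:directtest}, whose hypotheses are exactly the standing assumptions of the corollary, then yields $u_{z_0}\in\PSH(\Om)\cap C(\clOm)$ and $j_{z_0}$ such that:
\begin{itemize}
\item $u_{z_0}\le f_j$ on $\partial\Om$ for $j\ge j_{z_0}$;
\item $u_{z_0}(z_0)>G(z_0)-\eta/2$.
\end{itemize}
Since $u_{z_0}$ is continuous on $\clOm$, it is an admissible competitor in \eqref{eq:PB}. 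Hence $\PB_{f_j,\Om}\ge u_{z_0}$ on $\Om$ for all $j\ge j_{z_0}$.

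\emph{Passing to a neighbourhood.} Both $u_{z_0}$ and $G$ are continuous on $\Om$; the continuity of $G$ is assumed. So the strict inequality $u_{z_0}>G-\eta$ holds on an open neighbourhood $U_{z_0}$ of $z_0$ in $\Om$.

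\emph{Compactness.} Cover $L$ by finitely many sets $U_{z_1},\dots,U_{z_p}$ and let $j_0\coloneqq\max_i j_{z_i}$. For $j\ge j_0$, every $z\in L$ lies in some $U_{z_i}$, so
\[
G(z)-\eta<u_{z_i}(z)\le\PB_{f_j,\Om}(z)\le G(z).
\]
This is the claimed uniform convergence on $L$. Alternatively, one can take the finite maximum of the $u_{z_i}$, which is still continuous, plurisubharmonic and at most $f_j$ on $\partial\Om$.

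\emph{Main obstacle.} The essential difficulty is the pointwise step, and it has already been resolved in Lemma~\ref{lem:directtest}. There the pluripolar perturbation $\lambda\psi_k$ pushes the minorant below $c$ near $B$, which is where the data $f_j$ need not converge. The only remaining subtlety is local uniformity: it needs $G$ to be continuous, not merely upper semicontinuous, so that one lower bound at $z_0$ spreads to a whole neighbourhood. That continuity is among the corollary's hypotheses, so the argument closes.
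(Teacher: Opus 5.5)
Your proposal is correct and matches the paper's proof essentially step for step. Both arguments take a potential in $\mathcal P_B$ finite at $z_0$ and apply Lemma~\ref{lem:directtest} to get a pointwise lower bound. They spread that bound to a neighbourhood using continuity of the test function and of $G$, then cover the compact set finitely and take the largest index threshold. The upper bound comes from \eqref{eq:PBbound}.
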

	\begin{proof}
		At each point $z_0$ of such a compact set, choose a potential in
		$\mathcal P_B$ which is finite at $z_0$ and apply
		Lemma~\ref{lem:directtest}. By continuity of its test function and of $G$,
		the lower estimate persists in a neighbourhood of $z_0$. Finitely many of
		these neighbourhoods cover the compact set, and the maximum of their
		finitely many index thresholds works for every point. The upper estimate
		is \eqref{eq:PBbound}, which completes the proof.
	\end{proof}
	
	\begin{corollary}\label{cor:completepolar}
		Assume the hypotheses of Theorem~\ref{thm:continuity}. Suppose
		$B=A\cap\partial\Om$ is contained in a complete pluripolar set
		$E=\{\psi=-\infty\}$, where $\psi\in\PSH(U)$, $\psi\not\equiv-\infty$, on a
		connected neighbourhood $U$ of $\clOm$. If $f_j\in C(\partial\Om)$ have a
		common finite lower bound, satisfy $f_j\le\ph$, and converge locally
		uniformly to $\ph$ on $\partial\Om\setminus A$, then
		\[
		\PB_{f_j,\Om}\longrightarrow G_{\ph,\Om}
		\quad\text{uniformly on compact subsets of }\Om\setminus E.
		\]
	\end{corollary}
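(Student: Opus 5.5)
The plan is to deduce Corollary~\ref{cor:completepolar} from Corollary~\ref{cor:polarhull}. Theorem~\ref{thm:continuity} already provides the two analytic inputs that corollary needs: the continuous-minorant identity \eqref{eq:density} and continuity of $G=G_{\ph,\Om}$. Three points then remain: $\mathcal P_B\ne\varnothing$, the inclusion $\mathcal H_{\Om}(B)\subset E\cap\Om$, and agreement of the two convergence hypotheses on the boundary data.

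First I check that $\psi\in\mathcal P_B$. By hypothesis $\psi$ is plurisubharmonic on the connected neighbourhood $U$ of $\clOm$ and $\psi\not\equiv-\infty$. Since $B\subset E=\{\psi=-\infty\}$, we have $\psi|_B=-\infty$. Hence $\mathcal P_B\ne\varnothing$. Moreover, any $z\in\Om\setminus E$ satisfies $\psi(z)>-\infty$, so $z\notin\mathcal H_{\Om}(B)$ by \eqref{eq:polarhull}. Thus $\Om\setminus E\subset\Om\setminus\mathcal H_{\Om}(B)$, and every compact subset of $\Om\setminus E$ is a compact subset of $\Om\setminus\mathcal H_{\Om}(B)$.

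Next I reconcile the boundary hypotheses. On $\partial\Om$ the sets $\partial\Om\setminus A$ and $\partial\Om\setminus B$ coincide, because $B=A\cap\partial\Om$. So local uniform convergence on $\partial\Om\setminus A$ is exactly the hypothesis of Corollary~\ref{cor:polarhull}. The common finite lower bound supplies the constant $c$ with $c\le f_j\le\ph$.

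Corollary~\ref{cor:polarhull} then gives uniform convergence $\PB_{f_j,\Om}\to G_{\ph,\Om}$ on every compact subset of $\Om\setminus\mathcal H_{\Om}(B)$, and in particular on compact subsets of $\Om\setminus E$. There is no real obstacle here. The only subtle point is that the potential must live on a connected neighbourhood of $\clOm$, which is exactly how $U$ is assumed; if needed, one could also apply Lemma~\ref{lem:directtest} directly with this single $\psi$ and cover the compact set by finitely many points.
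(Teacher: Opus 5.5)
Your proposal is correct and follows the paper's proof. You show that $\psi\in\mathcal P_B$, hence $\mathcal H_{\Om}(B)\subset E$, and then apply Corollary~\ref{cor:polarhull}, with Theorem~\ref{thm:continuity} supplying the density identity and continuity of $G$; your check that $\partial\Om\setminus A=\partial\Om\setminus B$ is bookkeeping the paper leaves implicit.
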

	\begin{proof}
		The given potential $\psi$ belongs to $\mathcal P_B$, so
		$\mathcal H_{\Om}(B)\subset E$. The assertion follows immediately from
		Corollary~\ref{cor:polarhull}.
	\end{proof}
	
	There is a complementary way to avoid an exceptional set altogether: one can
	require convergence at every finite truncation level of the boundary data. The
	next criterion is purely topological on the boundary and is useful for oscillating
	sequences.
	
	\begin{proposition}\label{prop:truncated}
		Let $X$ be a compact metric space, let $\varphi:X\to(-\infty,+\infty]$ be
		continuous, and let $f_j\in C(X)$ satisfy $c\le f_j\le\varphi$. The following
		conditions are equivalent.
		\begin{enumerate}[label=\textup{(\alph*)}]
			\item For every $M\in\R$, the functions $\min\{f_j,M\}$ converge uniformly
			on $X$ to $\min\{\varphi,M\}$.
			\item For every finite $v\in C(X)$ with $v\le\varphi$ and every $\delta>0$,
			one has $f_j\ge v-\delta$ on $X$ for all sufficiently large $j$.
			\item If $j_k\to\infty$ and $x_k\to x\in X$, then
			$\liminf_k f_{j_k}(x_k)\ge\varphi(x)$.
		\end{enumerate}
		If $X=\partial\Om$, $\varphi=\ph|_{\partial\Om}$, and \eqref{eq:density} holds
		with $G$ continuous, these conditions imply
		\[
		\PB_{f_j,\Om}\longrightarrow G\quad\text{locally uniformly on }\Om.
		\]
	\end{proposition}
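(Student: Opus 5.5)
I will prove the equivalences in the cycle (a)$\Rightarrow$(c)$\Rightarrow$(b)$\Rightarrow$(a). Then I will deduce the Perron--Bremermann convergence from (b), using the same argument as at the end of the proof of Theorem~\ref{thm:continuity}.

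For (a)$\Rightarrow$(c), take $j_k\to\infty$ and $x_k\to x$, and fix a real $M<\ph(x)$ (any real $M$ if $\ph(x)=+\infty$). Uniform convergence of the truncations gives $\min\{f_{j_k}(x_k),M\}\ge\min\{\ph(x_k),M\}-o(1)$. Continuity of $\ph$ in the extended-real topology gives $\min\{\ph(x_k),M\}\to\min\{\ph(x),M\}=M$. Hence $\liminf_k f_{j_k}(x_k)\ge M$, and letting $M\uparrow\ph(x)$ yields (c).

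For (c)$\Rightarrow$(b), argue by contradiction. If (b) fails for some finite $v\le\ph$ and some $\delta>0$, there are $j_k\to\infty$ and $x_k\in X$ with $f_{j_k}(x_k)<v(x_k)-\delta$. By compactness we may assume $x_k\to x$. Condition (c) then gives $\ph(x)\le\liminf_k f_{j_k}(x_k)\le v(x)-\delta$, which contradicts $v\le\ph$.

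For (b)$\Rightarrow$(a), fix $M$ and apply (b) to $v\coloneqq\min\{\ph,M\}$. This function is finite and continuous, because truncation turns $+\infty$ into $M$ continuously. We obtain $\min\{f_j,M\}\ge v-\delta$ for large $j$. Since $f_j\le\ph$, we also have $\min\{f_j,M\}\le v$, so the convergence is uniform. The only delicate point throughout is the extended-real continuity at points where $\ph=+\infty$, which the truncation handles. The lower bound $c$ is not even needed for the equivalence.

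For the final assertion, take $X=\partial\Om$ and assume \eqref{eq:density} with $G$ continuous. Fix $L\Subset\Om$ and $\delta>0$. Using \eqref{eq:density}, continuity of $G$, and a finite cover of $L$, I choose a finite maximum $v\in\cclass$ with $v>G-2\delta$ on $L$. Its boundary restriction is finite, continuous and satisfies $v\le\ph$ on $\partial\Om$. Condition (b) then gives $f_j\ge v-\delta$ on $\partial\Om$ for large $j$. Since $v-\delta$ is continuous on $\clOm$, it is an admissible competitor in \eqref{eq:PB}, so $\PB_{f_j,\Om}\ge v-\delta>G-3\delta$ on $L$. The upper bound $\PB_{f_j,\Om}\le G$ is \eqref{eq:PBbound}, which applies because $f_j\le\ph$. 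This proves locally uniform convergence. I expect no real obstacle; the main care is the bookkeeping of extended-real continuity in steps (a)$\Rightarrow$(c) and (b)$\Rightarrow$(a).
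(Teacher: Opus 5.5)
Your proof is correct and uses the same ingredients as the paper: the truncated test function $\min\{\varphi,M\}$, a compactness contradiction, and the identical finite-cover argument for the envelope convergence. The only difference is that you run the cycle as (a)$\Rightarrow$(c)$\Rightarrow$(b)$\Rightarrow$(a), whereas the paper proves (a)$\Rightarrow$(b)$\Rightarrow$(c)$\Rightarrow$(a), so your compactness argument appears in (c)$\Rightarrow$(b) rather than in (c)$\Rightarrow$(a).
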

	\begin{proof}
		Assume (a). Given $v$ as in (b), choose $M>\max_X v$. Then
		$\min\{\varphi,M\}\ge v$, so uniform convergence gives
		$f_j\ge\min\{f_j,M\}\ge v-\delta$ for large $j$. Thus (b) holds.
		Apply (b) to $v=\min\{\varphi,M\}$ and let $x_k\to x$ along any indicated
		sequence. Continuity gives
		$\liminf_k f_{j_k}(x_k)\ge\min\{\varphi(x),M\}-\delta$.
		First let $\delta\downarrow0$, then let $M\to\infty$. This proves (c).
		
		Finally, if (a) fails for a fixed $M$, then $f_j\le\varphi$ and compactness
		give, after passing to a subsequence, $x_j\to x$ and $\eps>0$ with
		\[
		\min\{f_j(x_j),M\}
		<\min\{\varphi(x_j),M\}-\eps.
		\]
		Condition (c) implies that the liminf of the left side is at least
		$\min\{\varphi(x),M\}$, while continuity makes the right side tend to
		$\min\{\varphi(x),M\}-\eps$. This contradiction proves (a).
		
		For the envelope assertion, fix $L\Subset\Om$ and $\delta>0$. The
		continuous-minorant identity and a finite covering give $v\in\cclass$
		with $v>G-\delta$ on $L$. Condition (b) applied to its boundary trace
		gives $v-\delta\le f_j$ eventually on the whole boundary. Consequently
		\[
		G-2\delta< v-\delta\le\PB_{f_j,\Om}\le G\quad\text{on }L
		\]
		for all large $j$. Letting $\delta\downarrow0$ completes the proof.
	\end{proof}
	
	The density theorem also has a useful Jensen-measure consequence. We record
	only the fixed-point form of Edwards duality needed here. For $z\in\Om$, put
	\[
	J_z^{c,\partial}\coloneqq\left\{\mu:\begin{array}{l}
	\mu\text{ is a probability measure on }\partial\Om,\\[2pt]
	v(z)\le\displaystyle\int_{\partial\Om}v\,d\mu
	\quad\text{for every }v\in\PSH(\Om)\cap C(\clOm)
	\end{array}\right\}.
	\]
	\begin{lemma}\label{lem:Edwards}
		For every $z\in\Om$ the class $J_z^{c,\partial}$ is nonempty and compact,
		and for every $f\in C(\partial\Om)$,
		\begin{equation}\label{eq:Edwards}
		\Pc_{f,\Om}(z)
		\coloneqq\sup\{v(z):v\in\PSH(\Om)\cap C(\clOm),\ v\le f\text{ on }\partial\Om\}
		=\min_{\mu\in J_z^{c,\partial}}\int_{\partial\Om}f\,d\mu.
		\end{equation}
	\end{lemma}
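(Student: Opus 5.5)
This is Edwards' duality theorem for the convex cone $\mathcal F\coloneqq\PSH(\Om)\cap C(\clOm)$, restricted to $\partial\Om$. We would prove it by Hahn--Banach on $C(\partial\Om)$ together with the Riesz representation theorem. Fix $z\in\Om$.

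\emph{The easy inequality.} Suppose $v\in\mathcal F$ satisfies $v\le f$ on $\partial\Om$, and let $\mu\in J_z^{c,\partial}$. Then $v(z)\le\int v\,d\mu\le\int f\,d\mu$. Hence $\Pc_{f,\Om}(z)\le\inf_{\mu}\int f\,d\mu$ for every $f\in C(\partial\Om)$.

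\emph{Sublinear functional.} Define $p(f)\coloneqq\Pc_{f,\Om}(z)$ on $C(\partial\Om)$. It has the following properties.
\begin{enumerate}[label=\textup{(\alph*)}]
\item It is finite. Constants give $p(f)\ge\min f$. By the maximum principle, every competitor is bounded above by $\max_{\partial\Om} f$.
\item It is monotone, positively homogeneous, and satisfies $p(f+c)=p(f)+c$ for constants $c$.
\item It is superadditive, $p(f+g)\ge p(f)+p(g)$, because the sum of two competitors is a competitor for $f+g$.
\end{enumerate}
Thus $q\coloneqq-p(-\cdot)$ is sublinear with $q\ge p$. Actually we want a linear functional $\ell$ with $\ell\ge p$ and $\ell(f_0)=p(f_0)$ for a given $f_0$. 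The sublinear majorant to use is $s(f)\coloneqq -p(-f)$. Superadditivity of $p$ makes $s$ subadditive, and homogeneity makes $s$ positively homogeneous, so $s$ is sublinear. Moreover $p\le s$, since $p(f)+p(-f)\le p(0)=0$.

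\emph{Hahn--Banach step.} Define $\ell$ on $\R f_0$ by $\ell(\lambda f_0)=\lambda p(f_0)$. For $\lambda\ge0$ we get $\ell\le -p(-\lambda f_0)$: indeed $\lambda p(f_0)=p(\lambda f_0)\le -p(-\lambda f_0)$. For $\lambda<0$, write $\lambda=-\mu$ with $\mu>0$; then $-\mu p(f_0)=-p(\mu f_0)=s(-\mu f_0)$, with equality. So $\ell\le s$ on $\R f_0$, and Hahn--Banach extends $\ell$ to all of $C(\partial\Om)$ with $\ell\le s$. It follows that $\ell(g)\ge -s(-g)=p(g)$ for all $g$.
\begin{enumerate}[label=\textup{(\roman*)}]
\item Positivity: if $g\ge0$, then $\ell(g)\ge p(g)\ge\min g\ge0$.
\item Normalization: $\ell(1)\ge p(1)=1$ and $\ell(-1)\ge p(-1)=-1$, so $\ell(1)=1$.
\item Riesz representation then gives a probability measure $\mu$ on $\partial\Om$ with $\ell(g)=\int g\,d\mu$.
\item Jensen property: for $v\in\mathcal F$, the function $v$ is a competitor for $v|_{\partial\Om}$. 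Hence $v(z)\le p(v|_{\partial\Om})\le\int v\,d\mu$, so $\mu\in J_z^{c,\partial}$.
\end{enumerate}
Finally $\int f_0\,d\mu=p(f_0)$. Combined with the easy inequality, this shows the infimum in \eqref{eq:Edwards} is attained and equals $\Pc_{f_0,\Om}(z)$. Taking $f_0=0$ shows $J_z^{c,\partial}\neq\varnothing$.

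\emph{Compactness.} $J_z^{c,\partial}$ is an intersection of the weak-$*$ compact set of probability measures on $\partial\Om$ with the closed sets $\{\mu:\int v\,d\mu\ge v(z)\}$, one for each $v\in\mathcal F$. Each of these sets is weak-$*$ closed because $v|_{\partial\Om}$ is continuous. Hence $J_z^{c,\partial}$ is weak-$*$ compact.

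\emph{Main obstacle.} The only delicate point is the Hahn--Banach bookkeeping: $p$ is superlinear rather than sublinear, so one must dominate by $s(f)=-p(-f)$ and check the equality on the line through $f_0$. Everything else is routine. No regularity of $\Om$ is needed.
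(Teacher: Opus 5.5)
Your proof is correct, but it follows a different route from the paper. The paper does not run Hahn--Banach itself. It extends $f$ by $+\infty$ on $\Om$ to a lower semicontinuous function $\widetilde f$ on $\clOm$ and invokes Edwards' theorem for lower semicontinuous extended-real data and the cone $\PSH(\Om)\cap C(\clOm)$ on all of $\clOm$. It uses the harmonic measure $\omega_z$ to show that the class is nonempty and the infimum finite. Attainment then comes from weak compactness of the full Jensen class and lower semicontinuity of $\mu\mapsto\int\widetilde f\,d\mu$. Finally, a minimizer has finite integral, so it carries no mass in $\Om$.

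You instead work directly on $C(\partial\Om)$. Since the cone contains the constants and is closed under sums and positive multiples, $p=\Pc_{\cdot,\Om}(z)$ is finite and superlinear. A Hahn--Banach extension dominated by $s(f)=-p(-f)$, prescribed on the line through $f_0$, is automatically positive with $\ell(1)=1$. Riesz then gives a boundary probability measure that is Jensen and exact at $f_0$.

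Your route gives a self-contained proof in which nonemptiness, attainment and the duality all come out of one step. It needs neither harmonic measure on a possibly irregular domain nor the extended-real form of Edwards' theorem. The paper's route, in exchange, exhibits the explicit member $\omega_z$ and places the lemma inside the general Edwards framework. Your version is enough for the later application in Proposition~\ref{prop:unboundeddual}, which uses the lemma only for the finite truncations $\min\{\ph,N\}$ together with compactness of $J_z^{c,\partial}$.

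Two cosmetic points: your $q$ and $s$ are the same function, and $\mu$ is used both for a scalar and for the measure.
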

\begin{proof}
		We first verify that boundary-supported Jensen measures exist, without
		assuming regularity of the boundary. Let $\omega_z$ be the harmonic measure
		at $z$ for the bounded domain $\Om$, viewed as a probability measure on
		$\partial\Om$. The submean (or stopped Brownian-motion) inequality for a
		continuous subharmonic function on $\clOm$ gives
		$v(z)\le\int_{\partial\Om}v\,d\omega_z$ for every
		$v\in\PSH(\Om)\cap C(\clOm)$. Thus
		$\omega_z\in J_z^{c,\partial}$, and its integral of $f$ is finite.
		
		Extend $f$ to the lower semicontinuous, bounded-below function
		$\widetilde f$ on $\clOm$ which equals $f$ on $\partial\Om$ and $+\infty$
		on $\Om$. Edwards' theorem for lower semicontinuous extended-real data,
		applied to the cone $\PSH(\Om)\cap C(\clOm)$, gives the infimum over its
		full Jensen class on $\clOm$; see \cite{Edwards} and compare
		\cite[Proposition~2.1]{DW}. The boundary measure $\omega_z$ shows that
		this infimum is finite. The full Jensen class is weakly compact, and the
		integral of $\widetilde f$ is lower semicontinuous, so the infimum is
		attained. Every minimizer has finite integral and hence gives zero mass to
		$\Om$. Conversely, a boundary-supported Jensen measure for the cone is
		exactly a member of $J_z^{c,\partial}$. This proves \eqref{eq:Edwards}.
		Finally, $J_z^{c,\partial}$ is weakly closed in the space of probability
		measures on the compact set $\partial\Om$, because every test function has a
		continuous boundary trace. It is therefore compact.
	\end{proof}
	
	\begin{proposition}\label{prop:unboundeddual}
		Under the hypotheses of Theorem~\ref{thm:continuity}, the boundary-supported Jensen class of
		Lemma~\ref{lem:Edwards} satisfies
		\begin{equation}\label{eq:unboundeddual}
			G_{\ph,\Om}(z)=
			\min_{\mu\in J_z^{c,\partial}}\int_{\partial\Om}\ph\,d\mu,
			\qquad z\in\Om.
		\end{equation}
		Every minimizing measure has zero mass on $B=A\cap\partial\Om$ and finite
		integral in \eqref{eq:unboundeddual}.
	\end{proposition}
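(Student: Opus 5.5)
The plan is to prove the two inequalities in \eqref{eq:unboundeddual} separately, using the density formula \eqref{eq:density} of Theorem~\ref{thm:continuity} for the lower bound and the monotone truncations $f_k\coloneqq\min\{\ph,k\}|_{\partial\Om}\in C(\partial\Om)$ for the upper bound.

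\emph{Lower bound.} Let $\mu\in J_z^{c,\partial}$ and $v\in\cclass$. Since $v\le\ph$ on $\partial\Om$ and $v$ is continuous on $\clOm$, the Jensen property gives
\[
v(z)\le\int_{\partial\Om}v\,d\mu\le\int_{\partial\Om}\ph\,d\mu .
\]
Here the integral of $\ph$ is well defined in $(-\infty,+\infty]$ because $\ph$ is bounded below. Taking the supremum over $v$ and using \eqref{eq:density} yields $G_{\ph,\Om}(z)\le\int\ph\,d\mu$ for every $\mu$.

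\emph{Upper bound and attainment.} For each $k$, Lemma~\ref{lem:Edwards} provides $\mu_k\in J_z^{c,\partial}$ with
\[
\Pc_{f_k,\Om}(z)=\int f_k\,d\mu_k .
\]
Moreover $\Pc_{f_k,\Om}\le\PB_{f_k,\Om}\le G_{\ph,\Om}$ by \eqref{eq:PBbound}, since continuous competitors are Perron--Bremermann competitors. By compactness of $J_z^{c,\partial}$, pass to a weak limit $\mu_k\to\mu$ along a subsequence. For fixed $m$ and $k\ge m$ we have $f_m\le f_k$, so
\[
\int f_m\,d\mu_k\le\int f_k\,d\mu_k\le G_{\ph,\Om}(z).
\]
Since $f_m$ is continuous, letting $k\to\infty$ gives $\int f_m\,d\mu\le G_{\ph,\Om}(z)$. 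Monotone convergence as $m\to\infty$ (the $f_m$ are bounded below and increase to $\ph$) then gives $\int\ph\,d\mu\le G_{\ph,\Om}(z)$. Combined with the lower bound, $\mu$ is a minimizer and the minimum equals $G_{\ph,\Om}(z)$, which is finite by Theorem~\ref{thm:basic}(i).

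\emph{Properties of minimizers.} Any minimizer $\nu$ satisfies $\int\ph\,d\nu=G_{\ph,\Om}(z)<\infty$. Since $\ph=+\infty$ on $B$ and $\ph$ is bounded below, finiteness forces $\nu(B)=0$.

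The main obstacle is the interchange of limits in the upper bound, since weak convergence does not directly control integrals of the unbounded function $\ph$. The plan handles this by fixing the truncation level $m$ before taking the weak limit, so that only the continuous $f_m$ is integrated against $\mu_k$, and by exploiting monotonicity in $k$. Theorem~\ref{thm:continuity} is used only through \eqref{eq:density}; without it the lower bound could fail, because Jensen measures are tested only against continuous functions on $\clOm$.
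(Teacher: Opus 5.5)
Your proposal is correct and follows essentially the paper's argument: truncate $\ph$, take Edwards minimizers $\mu_k$, pass to a weak limit while the truncation level is held fixed, apply monotone convergence, and use \eqref{eq:density} for the reverse inequality and for the null-mass conclusion on $B$. The one minor difference is that you use only the one-sided bound $\Pc_{f_k,\Om}\le\PB_{f_k,\Om}\le G_{\ph,\Om}$, whereas the paper invokes the convergence $\Pc_{\ph_N,\Om}(z)\uparrow G_{\ph,\Om}(z)$. Your bound suffices because the density-based lower bound supplies the other inequality.
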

	\begin{proof}
		Let $\ph_N\coloneqq\min\{\ph,N\}$ on the boundary. The same compact-cover argument
		as in the monotone approximation argument of Theorem~\ref{thm:continuity}, applied to the envelope $\Pc$
		over continuous competitors, gives $\Pc_{\ph_N,\Om}(z)\uparrow G(z)$. Choose minimizing measures
		$\mu_N\in J_z^{c,\partial}$ for $\ph_N$ and a weakly convergent subsequence
		with limit $\mu$ in that compact class. For fixed $M$ and all $N\ge M$,
		\[
		\int \ph_M\,d\mu_N\le\int \ph_N\,d\mu_N=\Pc_{\ph_N,\Om}(z).
		\]
		Pass to the subsequential limit, then let $M\to\infty$. Monotone
		convergence after subtracting a lower bound yields
		$\int\ph\,d\mu\le G(z)$. Conversely, for every $\nu\in J_z^{c,\partial}$
		and $v\in\cclass$, one has $v(z)\le\int v\,d\nu\le\int\ph\,d\nu$.
		Taking the supremum over $v$ gives the reverse bound. Thus the minimum is
		attained and equals the finite value $G(z)$. Finally, if
		$\nu\in J_z^{c,\partial}$ is any minimizing measure, then
		$\int_{\partial\Om}\ph\,d\nu=G(z)<+\infty$. Since $\ph=+\infty$
		on $B$, necessarily $\nu(B)=0$. This proves the assertion for every
		minimizing measure, not only for the subsequential limit constructed above.
	\end{proof}
	
	\section{Hull representation, stability and uniqueness}\label{sec:hullproofs}
	
	\begin{proof}[Proof of Theorem~\ref{thm:hull}]
		Take $f_j=\min\{\ph,j\}$ on $\partial\Om$ and let $L_j$ be its boundary
		disk bundle as in Lemma~\ref{lem:Cauchyhull}. The compact sets $L_j$ decrease
		to $K_\partial$. By the monotone approximation conclusion of Theorem~\ref{thm:continuity},
		$\PB_{f_j,\Om}\uparrow G_{\ph,\Om}$. Lemmas~\ref{lem:Cauchyhull}(i)
		and~\ref{lem:hullcompactness} imply
		\begin{equation}\label{eq:tubeinhull}
			\{(z,w):z\in\Om,\ |w|\le e^{-G_{\ph,\Om}(z)}\}
			\subset\hull{K_\partial}{U\times\C}\subset\hull K{U\times\C}.
		\end{equation}
		
		For the reverse inclusion, projection to $U$ shows that
		$\hull K{U\times\C}\subset\clOm\times\C$, because $\clOm$ is
		$\OO(U)$-convex. Let $v\in\nclass$ and choose a continuous
		plurisubharmonic extension to a neighbourhood of $\clOm$, still denoted by
		$v$. Then $u_v(z,w)\coloneqq\log|w|+v(z)$ is plurisubharmonic on a neighbourhood of this hull
		and is at most zero on $K$. This remains true when $w=0$, where $u_v=-\infty$.
		Lemma~\ref{lem:localhull} gives $u_v\le0$ on the hull. Taking the supremum
		over $v\in\nclass$ and using Theorem~\ref{thm:continuity} gives the reverse
		inclusion in \eqref{eq:hullidentity}.
		
		Finally, $G_{\ph,\Om}\le\ph$, so every point of $K$ with base in $\Om$
		belongs to the left set in \eqref{eq:tubeinhull}. The remaining points of
		$K$ are in $K_\partial$. Hence $K\subset\hull{K_\partial}{U\times\C}$,
		which proves \eqref{eq:fullhullequality}.
	\end{proof}
	
	\begin{proof}[Proof of Corollary~\ref{cor:minimal}]
		The superharmonic minimum principle gives $\ph\ge m$ on $\Om$, so
		$G_{\ph,\Om}\ge m$. Put $R\coloneqq e^{-m}$. The set
		$T\times\{R\}$ is contained in $K_\partial$. If $z\in\hull T U$, then
		$(z,R)\in\hull{K_\partial}{U\times\C}$: apply the defining hull inequality
		to the holomorphic function $\zeta\mapsto H(\zeta,R)$ for each
		$H\in\OO(U\times\C)$. For $z\in\Om$, Theorem~\ref{thm:hull} therefore gives
		$R\le e^{-G_{\ph,\Om}(z)}$, or $G_{\ph,\Om}(z)\le m$.
		Together with the opposite inequality this proves one inclusion in the
		asserted identity. For the converse, we localize the top horizontal slice.
		
		On the disk $|w|\le R$, the affine polynomial
		$f_R(w)\coloneqq(1+w/R)/2$ satisfies $|f_R|\le1$, and equality holds only at $w=R$.
		If $(z,R)\in\hull{K_\partial}{U\times\C}$ and $h\in\OO(U)$, then for
		every positive integer $N$ the hull inequality for $h(\zeta)f_R(w)^N$ gives
		\begin{equation}\label{eq:peaklocalization}
			|h(z)|\le\max_{(\zeta,w)\in K_\partial}|h(\zeta)|\,|f_R(w)|^N.
		\end{equation}
		The maxima on the right decrease to $\max_T|h|$. Indeed, outside any
		relative neighbourhood of $T\times\{R\}$ in $K_\partial$, the continuous
		function $|f_R|$ is bounded by some $\theta<1$. Its contribution tends to
		zero as $N\to\infty$. Shrinking the neighbourhood gives the claimed
		limit, whereas points of $T\times\{R\}$ give the reverse inequality.
		Consequently $|h(z)|\le\max_T|h|$ for every $h\in\OO(U)$, and
		$z\in\hull T U$.
		
		The reverse inclusion in the slice follows by testing $H(\cdot,R)$, as in
		the first part of the proof. We have therefore shown
		\begin{equation}\label{eq:topslice}
			\hull{K_\partial}{U\times\C}\cap\{w=R\}=
			\{(z,R):z\in\hull T U\}.
		\end{equation}
		If $G(z)=m$, Theorem~\ref{thm:hull} places $(z,R)$ in that hull, so
		\eqref{eq:topslice} gives $z\in\hull T U$. The proof of the full equality
		is complete.
	\end{proof}
	
	The peak-fibre argument above is the one used in
	\cite[Proposition 3.2]{DiGreen}.
	
	The outer convergence assumption in Corollary~\ref{cor:stability} is essential,
	even for balls and a fixed logarithmic obstacle, as the following example shows.
	\begin{example}\label{ex:stability}
		Let $B_R\subset\C^n$ be the ball of radius $R$, write $z\coloneqq(z_1,z')$, and put
		$\ph(z)\coloneqq C-\log|z_1|$. Then
		\begin{equation}\label{eq:ballformula}
			G_{\ph,B_R}(z)=C-\frac12\log(R^2-\|z'\|^2).
		\end{equation}
		Indeed the right side is plurisubharmonic and at most $\ph$ on the ball.
		For the reverse inequality, fix $z'$ and restrict any admissible minorant
		to the disk $|z_1|<\sqrt{R^2-\|z'\|^2}$. The one-variable maximum principle
		on concentric smaller circles, followed by passage to their limiting radius,
		bounds that minorant by the right side. This proves \eqref{eq:ballformula}.
		
		Take $U=B_3$, $V=B_{5/2}$, $\Om=B_1$, $\Om_j=B_2$ for every $j$, and
		$\ph\coloneqq\log4-\log|z_1|$ and $\ph_j\coloneqq\ph$. Then
		$\clOm\subset V\Subset U$ and $\overline{\Om_j}\subset V$. These obstacles are
		positive, continuous in the extended sense, and superharmonic on $U$.
		Exponential convergence on $\overline V$ is exact, and every compact subset
		of $\Om$ lies in every $\Om_j$.
		Nevertheless,
		\[
		G_{\ph,\Om}(0)=\log4,\qquad G_{\ph_j,\Om_j}(0)=\log2.
		\]
		Thus inner domain control and equality of the obstacles do not imply
		convergence, even for balls.
	\end{example}
	
	\begin{proof}[Proof of Corollary~\ref{cor:stability}]
		Write $G_j\coloneqq G_{\ph_j,\Om_j}$ and $G\coloneqq G_{\ph,\Om}$. We first establish local
		uniform upper bounds for $G_j$. Fix $p\in\Om$ and choose $r>0$ with
		$\overline{B(p,4r)}\subset\Om$. Since $\ph$ is locally integrable, there is
		$q\in B(p,r)$ with $\ph(q)<\infty$. By the inner domain condition,
		$q\in\Om_j$ for all sufficiently large $j$, and the exponential convergence
		then gives $e^{-\ph_j(q)}\to e^{-\ph(q)}>0$. Thus, after discarding finitely
		many terms, $\ph_j|_{\Om_j}\not\equiv+\infty$ and Theorem~\ref{thm:basic}
		applies to every $G_j$. In particular $\ph_j(q)$ is bounded. The exponential
		convergence also gives a
		common finite lower bound $c_0$ for $\ph_j$ on $\overline V$ for all
		sufficiently large $j$, because the continuous functions $e^{-\ph_j}$ are then
		uniformly bounded above there. Hence the obstacle envelopes are uniformly
		bounded from below. For all large $j$, the ball $B(q,3r)$ and its closure are
		contained in $\Om_j$. The supermean inequality gives
		\[
		\int_{B(q,3r)}\ph_j\,dV
		\le |B(q,3r)|\ph_j(q)\le C_p.
		\]
		For $x\in B(p,r)$, $B(x,r)\subset B(q,3r)$ and $G_j\le\ph_j$. Since
		$\ph_j\ge c_0$ on $B(q,3r)$,
		\[
		\int_{B(x,r)}\ph_j\,dV
		\le C_p-c_0\bigl(|B(q,3r)|-|B(x,r)|\bigr).
		\]
		The submean inequality for $G_j$ therefore gives a uniform upper bound on
		$B(p,r)$. This proves the required local bounds, including near $A$.
		
		It follows that $u\coloneqq(\limsup_j G_j)^*$ is plurisubharmonic on $\Om$.
		On each compact subset where $\ph$ is finite, the convergence of the
		exponentials implies uniform convergence $\ph_j\to\ph$. Thus $u\le\ph$
		there, while on $A$ the inequality is automatic. Hence $u\le G$.
		Hartogs' lemma, using the continuity of $G$, now gives for every $L\Subset\Om$
		and every $\eps>0$
		\begin{equation}\label{eq:stabilityupper}
			G_j\le G+\eps\quad\text{on }L\quad\text{for all sufficiently large }j.
		\end{equation}
		One may obtain the stated form of Hartogs' lemma by covering $L$ with
		finitely many small balls on which the oscillation of $G$ is less than
		$\eps/3$ and applying the constant upper-bound version on each ball.
		
		For the lower estimate, use the neighbourhood-minorant conclusion of
		Theorem~\ref{thm:continuity}. Fix $v\in\nclass$ and $\delta>0$, and let
		$W$ be a neighbourhood of $\clOm$ on which $v$ is defined continuously
		and plurisubharmonically. Choose an open neighbourhood $W_0$ of $\clOm$
		with $\overline{W_0}\Subset W\cap V$. Assumption (c), together with
		$\overline{\Om_j}\subset\overline V$ and compactness of $\overline V$, implies
		$\overline{\Om_j}\subset W_0$ for all sufficiently large $j$: otherwise
		we could choose $y_j\in\overline{\Om_j}\setminus W_0$ and obtain, after
		passing to a subsequence, a cluster point in $\clOm\setminus W_0$, contrary
		to $\clOm\subset W_0$. In particular, $v$ is uniformly bounded on the
		relevant domains. We claim that
		\begin{equation}\label{eq:stableminorant}
			v-\delta\le\ph_j\quad\text{on }\overline{\Om_j}
			\quad\text{for all sufficiently large }j.
		\end{equation}
		Otherwise choose $y_j\in\overline{\Om_j}$ with
		$\ph_j(y_j)<v(y_j)-\delta$. A subsequence converges to $y\in\clOm$.
		The values on the right are bounded, so $e^{-\ph_j(y_j)}$ stay bounded
		away from zero. Uniform convergence of the radii and their continuity give
		$e^{-\ph_j(y_j)}\to e^{-\ph(y)}>0$. Hence $\ph(y)$ is finite and
		$\ph_j(y_j)\to\ph(y)$. Passing to the limit gives
		$\ph(y)\le v(y)-\delta$, contrary to $v\le\ph$ on $\clOm$.
		This proves \eqref{eq:stableminorant}.
		
		Since $v-\delta$ is an admissible minorant on $\Om_j$, we obtain
		$G_j\ge v-\delta$ there. Given a compact $L\Subset\Om$, finitely many
		neighbourhood minorants have a maximum $v$ with $v>G-\delta$ on $L$.
		Thus $G_j>G-2\delta$ on $L$ eventually. Together with
		\eqref{eq:stabilityupper}, this proves the asserted convergence.
	\end{proof}
	
	\begin{remark}
		Under the additional hypotheses of Theorem~\ref{thm:hull}, the lower
		stability estimate also follows from the hull method of \cite{DiGreen}.
		Set
		\[
		\begin{aligned}
		K_j&\coloneqq\{(z,w):z\in\overline{\Om_j},\ |w|\le e^{-\ph_j(z)}\},\\
		K&\coloneqq\{(z,w):z\in\clOm,\ |w|\le e^{-\ph(z)}\}.
		\end{aligned}
		\]
		These sets lie in one fixed compact subset of $U\times\C$, and every
		cluster point of $K_j$ belongs to $K$.
		Lemma~\ref{lem:Cauchyhull}(ii) gives
		$(z,e^{-G_j(z)})\in\hull{K_j}{U\times\C}$ for $z\in\Om_j$.
		If $z_j\to z\in\Om$ and $G_j(z_j)\le G(z)-\eps$, a subsequential limit
		$(z,w)$ satisfies $w\ge e^{-G(z)+\eps}$.
		Lemma~\ref{lem:hullcompactness} puts $(z,w)$ in $\hull K{U\times\C}$,
		contrary to \eqref{eq:hullidentity}.
	\end{remark}
	
\begin{proof}[Proof of Theorem~\ref{thm:uniqueness}]
		Theorems~\ref{thm:basic} and~\ref{thm:continuity} show that $G=G_{\ph,\Om}$
		belongs to the stated class. Conversely, let $u$ belong to it. Since $u\le\ph$,
		the definition of $G$ gives $u\le G$. In particular $u$ is locally bounded
		above, while its assumed lower bound makes it locally bounded on both sides.
		Thus its Bedford--Taylor Monge--Amp\`ere measure is defined, and the
		assumption $(\ddc u)^n=0$ implies that $u$ is maximal.
		
		Choose $\psi\in\PSH(U)$, $\psi\not\equiv-\infty$, with
		$A\cap\partial\Om\subset E\coloneqq\{\psi=-\infty\}$. After shrinking $U$ around
		$\clOm$ and subtracting a constant, assume $\psi\le0$ there.
		For $v\in\cclass$ and $\eps>0$ consider $v+\eps\psi$ on $\Om$.
		At $\xi\in\partial\Om\setminus A$, the boundary trace of $u$ and $v\le\ph$
		give
		\[
		\limsup_{\Om\ni z\to\xi}(v(z)+\eps\psi(z)-u(z))\le0.
		\]
		At $\xi\in A\cap\partial\Om$, upper semicontinuity at the value $-\infty$
		gives $\psi(z)\to-\infty$. Since $v$ is bounded and $u$ is bounded from below,
		the same limsup is $-\infty$. The function $u$ is locally bounded and maximal,
		so Lemma~\ref{lem:comparison} yields $v+\eps\psi\le u$ on $\Om$.
		Let $\eps\downarrow0$ on $\Om\setminus E$, then take the supremum over $v$.
		Equation \eqref{eq:density} gives $G\le u$ there. Thus $u=G$ on
		$\Om\setminus E$. Since $\psi\not\equiv-\infty$ on the connected
		neighbourhood $U$, its $-\infty$ locus $E$ has Lebesgue measure zero.
		Consequently $u=G$ almost everywhere on $\Om$. Both functions belong to
		$L^1_{\mathrm{loc}}(\Om)$, so they define the same distribution; a
		plurisubharmonic function is the upper-semicontinuous representative of its
		$L^1_{\mathrm{loc}}$ class (equivalently, it is recovered from its ball means).
		Hence $u=G$ on all of $\Om$.
	\end{proof}
	
	\begin{remark}
		Let $f(e^{i\theta})\coloneqq|1-e^{i\theta}|^{-1/2}$ on $\partial\Delta$ and let
		$u$ be its Poisson integral. This integrable boundary function is continuous
		in the extended sense, and $u$ has boundary values $f$, including the value
		$+\infty$ at $1$. The latter assertion follows by comparison with the Poisson
		integrals of $\min\{f,N\}$ and then letting $N\to\infty$.
		The positive Poisson kernel
		\[
		P(z)\coloneqq\frac{1-|z|^2}{|1-z|^2}
		\]
		has boundary value zero away from $1$, so $u$ and $u+P$ are different harmonic
		functions with the same extended-real boundary trace. The common obstacle
		$u$ excludes $u+P$.
	\end{remark}
	
	\section{Geometric constructions and explicit tests}\label{sec:geometry}
	
	\begin{proof}[Proof of Corollary~\ref{cor:homogeneous}]
		For each $i$, the function
		$\log\sum_j|f_{i,j}|^{\alpha_i}$ is plurisubharmonic, with value $-\infty$
		where all the corresponding linear forms vanish. This follows, for example,
		by applying the convex increasing log-sum-exp function to the
		plurisubharmonic functions $\alpha_i\log|f_{i,j}|$, with regularization at
		zeros. Thus $\Psi_0$ is plurisuperharmonic, continuous in the extended sense,
		and locally integrable. Its singular set is a finite union of proper linear
		subspaces.
		
		A bounded convex domain is regular for the real Dirichlet problem. Let
		$H\in C(\clOm)$ be the harmonic extension of $h$.
		Then $g\coloneqq\Psi_0+H$ satisfies \eqref{eq:standing} on $\Om$.
		For the expansion $\Phi_t(z)\coloneqq(1+t)z$, convexity and $0\in\Om$ imply
		$\clOm\Subset\Phi_t(\Om)$ for $t>0$. Hence $S_\Phi=\varnothing$.
		If $d\coloneqq\sum_i\alpha_i$, homogeneity gives, away from the singular set,
		\[
		g(z)-g((1+t)z)=d\log(1+t)+H(z)-H((1+t)z),
		\]
		for $z,(1+t)z\in\clOm$. Uniform continuity of $H$ on $\clOm$ makes
		this difference tend to zero uniformly on these overlaps. The singular
		linear subspaces are preserved exactly, so the radius criterion also holds
		on their zero fibres. Theorem~\ref{thm:continuity} therefore gives
		continuity of $G_{g,\Om}$.
		Strict convexity supplies a continuous plurisubharmonic barrier at every
		boundary point: the real part of a supporting complex affine functional,
		with its maximum subtracted, peaks only at that point.
		Theorem~\ref{thm:basic} now proves all the asserted boundary limits and the
		homogeneous Monge--Amp\`ere equation.
	\end{proof}
	
	\begin{remark}
		The same argument proves interior continuity of $G_{\Psi_0,\Om}$ on a bounded
		strongly star-shaped domain, meaning $s\clOm\Subset\Om$ for every
		$0\le s<1$.
	\end{remark}
	
	\begin{proof}[Proof of Proposition~\ref{prop:transverse}]
		If $A\cap\partial\Om=\varnothing$, there is nothing to prove. Assume henceforth that this set is nonempty.
		Write $R(x,t)\coloneqq\rho(\Phi_t^{-1}(x))$. Joint $C^1$ regularity gives
		\[
		\partial_t R(x,0)=2\Rea\sum_i\rho_{z_i}(x)
		\left.\partial_t(\Phi_t^{-1})_i(x)\right|_{t=0}.
		\]
		By \eqref{eq:negativeinverse} and compactness of $A\cap\partial\Om$, there
		are $c>0$, $\delta>0$, and a neighbourhood $W$ of that set such that
		$\partial_t R(x,t)\le-c$ for $x\in W$ and $0\le t\le\delta$.
		For $x\in W\cap\clOm$, after shrinking the neighbourhoods if necessary,
		\[
		R(x,t)=\rho(x)+\int_0^t\partial_s R(x,s)\,ds\le-ct<0.
		\]
		Thus $\Phi_t^{-1}(x)\in\Om$, or $x\in\Phi_t(\Om)$.
		In particular $W\cap\clOm\cap\Phi_t(\partial\Om)=\varnothing$ for such $t$,
		which proves $S_\Phi\cap A=\varnothing$ and the stronger inclusion.
	\end{proof}
	
	\begin{proof}[Proof of Lemma~\ref{lem:divisor}]
		For each fixed $z\in\{f=0\}$, the holomorphic function
		$t\mapsto f(\Phi_t(z))$ vanishes for small real $t\ge0$, hence for small
		complex $t$ by the identity theorem. Since $f$ is reduced, the ideal sheaf
		of its zero set is generated by $f$. Division in the product of the
		$z$-neighbourhood and the parameter disk gives a holomorphic quotient $\theta(z,t)$; the quotients
		agree on overlaps since they agree off $\{f=0\}$. Equivalently, this is the
		principal reduced-divisor case of holomorphic division; see \cite{Ch}.
		At $t=0$, $f=\theta(\cdot,0)f$, so $\theta(\cdot,0)=1$. Compactness permits shrinking
		the parameter disk until $|\theta(z,t)-1|<1/2$ on a neighbourhood of $\clOm$.
		Writing $\theta_t(z)\coloneqq\theta(z,t)$, we see that $\theta_t$ is a unit and tends uniformly to one. Off the divisor,
		\[
		\log|f(\Phi_t(z))|-\log|f(z)|=\log|\theta_t(z)|,
		\]
		which proves the estimate.
	\end{proof}
	
	\begin{proof}[Proof of Proposition~\ref{prop:graphs}]
		The compact set $\pi(\overline{\Om'})$ is contained in the open set $\pi(U)$.
		Choose a slightly larger relatively compact neighbourhood of this compact
		set, and take complex $t$ small enough that $z'+t\Psi(z')$ stays in $\pi(U)$
		there. Define the holomorphic divided differences
		\begin{equation}\label{eq:divided}
			\theta_j(z',t)\coloneqq\begin{cases}
				\dfrac{f_j(z'+t\Psi(z'))-f_j(z')}{t},&t\ne0,\\[4pt]
				\displaystyle\sum_{i=1}^{n-1}\Psi_i(z')\frac{\partial f_j}{\partial z_i}(z'),&t=0.
			\end{cases}
		\end{equation}
		Holomorphicity at $t=0$ follows either from the Taylor expansion or from
		\[
		\theta_j(z',t)=\int_0^1 df_j(z'+st\Psi(z'))[\Psi(z')]\,ds.
		\]
		The interpolation functions
		\begin{equation}\label{eq:Lagrange}
			L_j(z',z_n)\coloneqq\prod_{\ell\ne j}
			\frac{z_n-f_\ell(z')}{f_j(z')-f_\ell(z')},\qquad
			\Theta(z',z_n,t)\coloneqq\sum_{j=1}^k \theta_j(z',t)L_j(z',z_n)
		\end{equation}
		are holomorphic on the required neighbourhood because the denominators
		never vanish. Since $L_j(z',f_i(z'))=\delta_{ij}$, the formula for $\Phi_t$
		gives the asserted graph identities exactly.
		
		To prove injectivity, choose a fixed bounded neighbourhood $N_1$ of
		$\overline{\Om'}$. On $N_1$ the maps have the form $\Phi_t=\mathrm{id}+tR_t$, where
		$R_t$ and its first derivatives are uniformly bounded on a slightly larger
		neighbourhood. There is $r>0$ such that segments between points of $N_1$
		whose distance is less than $r$ lie in that larger neighbourhood.
		For such pairs $x,y$,
		\[
		|\Phi_t(x)-\Phi_t(y)|\ge(1-|t|M)|x-y|.
		\]
		For pairs with $|x-y|\ge r$, the uniform $C^0$ bound gives instead
		$|\Phi_t(x)-\Phi_t(y)|\ge r-2|t|M_0>0$ for small $t$.
		After making $N_1$ bounded and slightly smaller, these estimates prove
		injectivity on $N_1$. The derivative is also invertible for small $t$.
		The holomorphic inverse function theorem gives a holomorphic inverse and
		holomorphic dependence of the inverse on the parameter locally near $t=0$.
		Uniform convergence to the identity and the required inverse-trajectory
		property follow. Thus these maps form the claimed neighbourhood deformation.
	\end{proof}
	
	\begin{proof}[Proof of Proposition~\ref{prop:graphcontinuity}]
		It suffices to consider $h_j>0$, since replacing $\Psi$ by $-\Psi$ changes
		all the signs. Apply Proposition~\ref{prop:graphs} on a neighbourhood of
		$\clOm$. The reduced product $f=\prod_j(z_n-f_j(z'))$ has disjoint smooth
		components, and the deformation preserves them. Lemma~\ref{lem:divisor}
		therefore gives the stronger uniform version of \eqref{eq:compatibility}
		for $\ph\coloneqq-\log|f|$.
		
		The forward velocity on $A_j$ is
		\[
		\left(\Psi(\xi'),\ \sum_{i=1}^{n-1}\Psi_i(\xi')
		\frac{\partial f_j}{\partial z_i}(\xi')\right).
		\]
		Differentiating $\Phi_t\circ\Phi_t^{-1}=\mathrm{id}$ at $t=0$ shows that
		the inverse velocity is its negative. Consequently
		\[
		\left.\frac{d}{dt}\rho(\Phi_t^{-1}(\xi))\right|_{t=0}=-2h_j(\xi)<0.
		\]
		Proposition~\ref{prop:transverse} yields $S_\Phi\cap A=\varnothing$.
		
		At a boundary point $\xi$, choose $\eta>0$ so small that
		$\rho-\alpha-\eta|z-\xi|^2$ is plurisubharmonic on a neighbourhood of
		$\clOm$. This function is continuous, equals zero at $\xi$, and is strictly
		negative on $\clOm\setminus\{\xi\}$. It is therefore a neighbourhood
		barrier. Theorem~\ref{thm:continuity} applies.
		
		Since $\rho$ is a strictly plurisubharmonic exhaustion, $U$ is Stein. The
		compact sublevel $\{\rho\le\alpha\}$ is therefore $\OO(U)$-convex by
		the plurisubharmonic hull characterization. Since $\alpha$ is a regular value, this compact
		sublevel is a smooth manifold with boundary and therefore has only finitely
		many connected components. Moreover, the closure of each component of
		$\{\rho<\alpha\}$ is exactly the corresponding component of
		$\{\rho\le\alpha\}$: near the regular level set, the function
		$\rho-\alpha$ is a local boundary coordinate, so two distinct interior
		components cannot meet only along $\{\rho=\alpha\}$. Choose pairwise disjoint neighbourhoods of these
		components. A function that is identically zero near one component and
		identically one near the others is holomorphic on their union. By Oka--Weil
		approximation on $\{\rho\le\alpha\}$, it can be approximated within
		$1/3$ by a function in $\OO(U)$. Hence every point of any other component
		is separated from the chosen one by an $\OO(U)$-function. Since the hull of
		the chosen component is already contained in the whole sublevel, it follows
		that $\clOm$ is $\OO(U)$-convex.
		Theorem~\ref{thm:hull} follows. Finally the divisor is complete pluripolar
		on $U$, with defining plurisubharmonic function $\log|f|$, and every finite
		boundary point has a barrier. Theorem~\ref{thm:uniqueness} therefore applies.
	\end{proof}
	
	We now turn to explicit deformations. The first example shows directly that the
	continuity theorem genuinely allows families that are pairwise nonnested.
	\begin{example}\label{ex:nonnested}
		Let $B\coloneqq\{(z_1,z_2):|z_1|^2+|z_2|^2<1\}$ and set $\Om\coloneqq B$. Let
		$\ph\coloneqq-\log|z_1|$, and set
		\begin{equation}\label{eq:nonnestedmap}
			\Phi_t(z_1,z_2)\coloneqq(e^{-t}z_1,e^t z_2),\qquad t\ge0.
		\end{equation}
		The resulting domains are
		\begin{equation}\label{eq:nonnesteddomains}
			D_t\coloneqq\{(y_1,y_2):e^{2t}|y_1|^2+e^{-2t}|y_2|^2<1\}.
		\end{equation}
		For every $t>0$, the first semiaxis has length $e^{-t}<1$ and the second
		has length $e^t>1$. Consequently neither $D_t\subset B$ nor $B\subset D_t$
		holds. More generally, if $0\le s<t$, a point on the first coordinate axis
		can be chosen in $D_s\setminus D_t$, and a point on the second axis can
		be chosen in $D_t\setminus D_s$. Hence the entire family is pairwise
		incomparable under inclusion.
		
		The hyperplane $\{z_1=0\}$ is invariant under $\Phi_t$.
		On the overlap $x\in B$, $\Phi_t(x)\in\overline B$, one has
		\[
		\ph(x)-\ph(\Phi_t(x))=-t\qquad(x_1\ne0).
		\]
		Equivalently, $r(\Phi_t(x))=e^{-t}r(x)$ on this overlap, including its zero
		fibres. Thus (D1) holds.
		To compute the crossing set, write $y\coloneqq\Phi_t(x)$ with $x\in\partial B$.
		Then $y\in\overline B$ precisely when
		\[
		e^{-2t}|x_1|^2+e^{2t}|x_2|^2\le1,
		\quad\text{or equivalently}\quad
		|x_2|^2\le\frac{1}{1+e^{2t}}.
		\]
		It follows that
		\begin{equation}\label{eq:nonnestedcluster}
			S_\Phi=\{\xi\in\partial B:|\xi_2|^2\le1/2\}.
		\end{equation}
		For the reverse inclusion implicit in this equality, points with strict
		inequality can be kept fixed as source points for all small $t$; points
		with equality are approached by source points whose second squared modulus
		is $1/(1+e^{2t})$. Thus every point of the stated set is attained as a
		cluster point. Since $|\xi_1|^2\ge1/2$ on $S_\Phi$, this set is disjoint
		from $A$. Each $\xi\in\partial B$ has the neighbourhood barrier
		$\Re\langle z,\xi\rangle-1$. All hypotheses of Theorem~\ref{thm:continuity}
		are therefore satisfied.
		
		The envelope is
		\begin{equation}\label{eq:nonnestedG}
			G_{\ph,B}(z_1,z_2)=-\frac12\log(1-|z_2|^2).
		\end{equation}
		For fixed $z_2$, every admissible minorant is bounded by the right side
		on the disk $|z_1|<\sqrt{1-|z_2|^2}$, by the one-variable maximum principle.
		The right side is itself a plurisubharmonic minorant, which proves the
		formula. It is finite and continuous across the interior divisor.
		
		For additional comparison, keeping the same obstacle on $D_t$ gives
		\begin{equation}\label{eq:nonnestedGt}
			G_{\ph,D_t}(z_1,z_2)
			=t-\frac12\log(1-e^{-2t}|z_2|^2).
		\end{equation}
		These envelopes do not have one common pointwise order as $t$ increases.
		Their derivative at $t=0$, at a fixed interior point, is
		\[
		\left.\frac{\partial}{\partial t}G_{\ph,D_t}(z)\right|_{t=0}
		=\frac{1-2|z_2|^2}{1-|z_2|^2}.
		\]
		It is positive when $|z_2|^2<1/2$ and negative when $|z_2|^2>1/2$.
		The functions in \eqref{eq:nonnestedGt} converge locally
		uniformly to \eqref{eq:nonnestedG} as $t\to0$.
	\end{example}
	
	The next example concerns the boundary-data side of the theorem. It gives an
	unbounded analogue of the nonconvergence phenomenon from \cite{DieuPB2007}
	and, in the present setting, identifies exactly where convergence may be lost.
	\begin{example}\label{ex:boundarycutoffs}
		In \cite[Proposition~3.1]{DieuPB2007}, uniformly bounded continuous data on
		the unit sphere in $\C^2$ were shown to converge pointwise to zero while
		their Perron--Bremermann envelopes fail to converge to zero. The present example is adapted to an
		unbounded limiting obstacle and makes the resulting exceptional locus explicit.
		Continue with the ball and obstacle of Example~\ref{ex:nonnested}. On
		$\partial B$, put $r\coloneqq|z_1|$ and choose the continuous cutoff
		\[
		\chi(s)\coloneqq
		\begin{cases}
			0,&0\le s\le1,\\
			s-1,&1<s<2,\\
			1,&s\ge2.
		\end{cases}
		\]
		For $j\ge2$, let $\lambda_j\coloneqq j^2$ when $j$ is even and $\lambda_j\coloneqq j$ when
		$j$ is odd, and define
		\begin{equation}\label{eq:cutoffdata}
			f_j\coloneqq\chi(\lambda_j r)\min\{\ph,j\}\quad\text{on }\partial B.
		\end{equation}
		These are finite continuous functions: the truncated obstacle is continuous
		also at $r=0$. They satisfy $0\le f_j\le\ph$, and on each compact subset
		of $\partial B\setminus A$ one has $f_j=\ph$ for all sufficiently large
		$j$. The sequence is not increasing. Indeed, for even $j$ large enough,
		choose a boundary point with $r=1/(j+1)$. Then
		$f_{j+1}=0$, whereas $f_j=\log(j+1)>0$. Nor is it decreasing: at a fixed point with $r=1/4$, one has $f_2=0$,
		whereas $f_j$ eventually equals the positive value $\ph=\log4$.
		
		Every $f_j$ vanishes on the singular boundary circle
		$B_A=\{(0,\zeta):|\zeta|=1\}$. Restrict an admissible Perron competitor
		to the analytic disk $\{(0,\zeta):|\zeta|<1\}$. Its boundary upper limits
		are at most zero. The subharmonic maximum principle, and the admissible
		constant zero for the reverse inequality, give
		\begin{equation}\label{eq:cutofffailure}
			\PB_{f_j,B}(0,z_2)=0\qquad(|z_2|<1).
		\end{equation}
		But \eqref{eq:nonnestedG} is strictly positive there whenever $z_2\ne0$.
		Thus convergence on the whole ball is false for these data. On every
		compact subset of $B\setminus\{z_1=0\}$, it does hold by
		Corollary~\ref{cor:completepolar}.
		
		This example also identifies the intrinsic exceptional set
		\eqref{eq:polarhull}. If a plurisubharmonic function on a neighbourhood of
		$\overline B$ is $-\infty$ on $B_A$, its restriction to the closed axis disk
		is $-\infty$ on the entire disk. To see this, for any $M>0$, upper
		semicontinuity and compactness give a collar of the boundary circle on
		which the restriction is below $-M$; the maximum principle then gives the
		same bound inside. Let $M\to\infty$. Therefore
		$\mathcal H_B(B_A)$ contains the axis disk. The function $\log|z_1|$ gives
		the reverse inclusion. Hence
		\[
		\mathcal H_B(B_A)=\{(0,z_2):|z_2|<1\}.
		\]
	\end{example}
	
	In contrast, $\widetilde f_j=\min\{\ph,j\}-\delta_j$, with
	$\delta_j=(1+(-1)^j/2)/j$, is another nonmonotone sequence from below.
	It satisfies the finite-truncation condition of
	Proposition~\ref{prop:truncated}: for every fixed $M$, once $j>M+1$ one has
	\[
	\bigl\|\min\{\widetilde f_j,M\}-\min\{\ph,M\}\bigr\|_{\partial B}
	\le\delta_j\longrightarrow0.
	\]
	Its envelopes converge locally uniformly on the whole ball, including the
	axis.
	
	The same mechanism applies to invariant divisors beyond hyperplanes. A basic
	family is provided by weighted homogeneous curves.
	\begin{example}\label{ex:cusp}
		In the unit ball $B\subset\C^2$ (with ambient domain $U=\C^2$), let
		$f(z_1,z_2)\coloneqq z_1^m-z_2^n$, where $m,n\ge1$ are integers, and put
		$\ph\coloneqq-\log|f|$. The outward deformation
		\[
		\Phi_t(z_1,z_2)\coloneqq((1+t)^nz_1,(1+t)^mz_2)
		\]
		satisfies $f\circ\Phi_t=(1+t)^{mn}f$ and
		$\ph-\ph\circ\Phi_t=mn\log(1+t)$. Every point of the closed unit ball lies
		strictly inside $\Phi_t(B)$ for $t>0$, so $S_\Phi=\varnothing$.
		Theorems~\ref{thm:continuity}, \ref{thm:hull}, and~\ref{thm:uniqueness}
		apply. This includes singular curves and finite unions of components when
		$m,n$ have a common divisor.
	\end{example}
	
	The graph construction of Proposition~\ref{prop:graphs} also yields examples
	in which the singular set is nonlinear and the boundary cluster set is nonempty.
	\begin{example}\label{ex:curved}
		Let $B\subset\C^2$ be the unit ball, with ambient domain $U=\C^2$. Choose $0<a<1$ and
		$\varepsilon\in\C$ with $|\varepsilon|<1/2$. Set
		\[
		f_\pm(z_1)\coloneqq\varepsilon z_1^2\pm a,\qquad
		F(z_1,z_2)\coloneqq(z_2-\varepsilon z_1^2)^2-a^2,\qquad \ph\coloneqq-\log|F|.
		\]
		The two graphs are disjoint everywhere. The explicit polynomial
		biholomorphisms
		\begin{equation}\label{eq:curveddeformation}
			\Phi_t(z_1,z_2)\coloneqq
			\big((1+t)z_1,\ z_2+\varepsilon((1+t)^2-1)z_1^2\big)
		\end{equation}
		satisfy $F\circ\Phi_t=F$. Their inverse is obtained by first dividing the
		first coordinate by $1+t$, then subtracting the displayed quadratic term.
		Thus the obstacle compatibility is exact.
		
		For $\rho\coloneqq|z_1|^2+|z_2|^2-1$ and forward velocity
		$(z_1,2\varepsilon z_1^2)$, the outward normal expression at a point
		$\xi\in\{F=0\}\cap\partial B$ is
		\begin{align*}
			h(\xi)&=|\xi_1|^2+\Rea(2\varepsilon\xi_1^2\overline{\xi_2})\\
			&\ge(1-2|\varepsilon|)|\xi_1|^2>0.
		\end{align*}
		The last strict inequality holds because $\xi_1=0$ on these graphs would
		force $\xi_2=\pm a$, which lies inside the ball, not on its boundary.
		Proposition~\ref{prop:transverse} proves $S_\Phi\cap\{F=0\}=\varnothing$.
		In fact, if $s\coloneqq(1+t)^2-1>0$ and $y\in\partial B$, then
		\[
		\|\Phi_t(y)\|^2-1
		\ge s(1-2|\varepsilon|)|y_1|^2.
		\]
		Thus the only crossing points in the closed ball are the fixed points
		$y_1=0$, and $S_\Phi=\{(0,w):|w|=1\}$ is genuinely nonempty.
		All boundary points of the ball have neighbourhood barriers, and
		$\overline B$ is polynomially convex. Hence the residual envelope is
		continuous, the disk-bundle hull identities hold, and the obstacle-dominated
		solution is unique.
	\end{example}
	
	\section{Logarithmic obstacles with several generators}\label{sec:scope}
	
	\begin{proposition}\label{prop:tuple}
		Let $F\coloneqq(F_1,\ldots,F_q)$ be holomorphic on a neighbourhood of $\clOm$, and
		assume that $F$ is not identically zero on the connected neighbourhood in question. Put
		\[
		\ph\coloneqq-\log\|F\|+H,
		\]
		where $H\in C(\clOm;\R)$ is superharmonic on $\Om$. Suppose $\Phi_t$ is a
		neighbourhood deformation and that on a fixed neighbourhood of $\clOm$
		there are holomorphic $q\times q$ matrices $M_t$ such that
		\begin{equation}\label{eq:matrixcovariance}
			F(\Phi_t(x))=M_t(x)F(x),\qquad
			\sup_{x\in\clOm}\|M_t(x)-I\|\longrightarrow0.
		\end{equation}
		If $S_\Phi$ avoids $\{F=0\}\cap\partial\Om$ and admits the barriers of
	Theorem~\ref{thm:continuity}, then $G_{\ph,\Om}$ is continuous and the monotone
	approximation conclusion of that theorem holds. The nonmonotone conclusion of
	Corollary~\ref{cor:completepolar} also applies. The same assertion holds
		for a finite positive linear combination of such logarithmic obstacles,
		provided each tuple satisfies \eqref{eq:matrixcovariance} for the same
		family and the cluster set avoids the union of their zero sets.
	\end{proposition}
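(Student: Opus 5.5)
The plan is to check the standing hypotheses \eqref{eq:standing} for $\ph=-\log\|F\|+H$ and condition (D1), and then quote Theorem~\ref{thm:continuity} and Corollary~\ref{cor:completepolar}.

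\emph{Standing hypotheses.} The function $\log\|F\|=\tfrac12\log\sum_k|F_k|^2$ is plurisubharmonic, hence subharmonic, and it is continuous in the extended sense. It is $-\infty$ exactly on $\{F=0\}$. Since $F\not\equiv0$ on the connected neighbourhood, the zero set is a proper analytic set. Therefore $\ph$ is continuous from $\clOm$ to $(-\infty,+\infty]$, superharmonic on $\Om$, and not identically $+\infty$. It is bounded below because $\|F\|$ is bounded on $\clOm$ and $H$ is continuous. The infinite locus is $A=\{F=0\}\cap\clOm$, so the hypothesis on $S_\Phi$ is precisely (D2).

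\emph{Condition (D1).} I will verify (D1) through the radius criterion, Lemma~\ref{lem:radiuscriterion}, with $r=e^{-\ph}=\|F\|e^{-H}$. For $x\in\Om$ with $\Phi_t(x)\in\clOm$, \eqref{eq:matrixcovariance} gives
\[
r(\Phi_t(x))=\|M_t(x)F(x)\|e^{-H(\Phi_t(x))}\le\|M_t(x)\|\,e^{H(x)-H(\Phi_t(x))}\,r(x).
\]
Since $\sup_{\clOm}\|M_t-I\|\to0$, the factor $\|M_t(x)\|$ is at most $e^{\eps/2}$ for small $t$. For the other factor, $\Phi_t\to\mathrm{id}$ uniformly on $\clOm$ and $H$ is uniformly continuous on $\clOm$. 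Both points $x$ and $\Phi_t(x)$ lie in $\clOm$, so $|H(x)-H(\Phi_t(x))|\le\eps/2$ for small $t$. This gives \eqref{eq:radiuscriterion} with $V_\eps$ equal to the whole ambient neighbourhood.

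The zero fibres need a separate check. If $F(x)=0$, then $F(\Phi_t(x))=0$ by covariance, so the inequality holds literally as $0\le0$. Hence (D1) holds, and Theorem~\ref{thm:continuity} yields continuity, the density identity \eqref{eq:density} and monotone Perron--Bremermann convergence.

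\emph{Nonmonotone conclusion.} The set $B=A\cap\partial\Om$ lies in $E=\{\log\|F\|=-\infty\}$. The function $\log\|F\|$ is plurisubharmonic and $\not\equiv-\infty$ on a connected neighbourhood of $\clOm$, so $E$ is complete pluripolar. Corollary~\ref{cor:completepolar} therefore applies.

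\emph{Linear combinations.} Let $\ph=-\sum_i c_i\log\|F^{(i)}\|+H$ with $c_i>0$. The standing hypotheses follow as before. The infinite locus is the union of the zero sets $\{F^{(i)}=0\}$, and $S_\Phi$ avoids it by assumption. In the radius estimate the factor becomes $\prod_i\|M^{(i)}_t\|^{c_i}$, which still tends to $1$. A zero of any single tuple is preserved by that tuple's covariance, which again handles the literal zero case. For the complete pluripolar set, take $\psi=\sum_i\log\|F^{(i)}\|$.

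The step I expect to need the most care is the zero-fibre case: showing that $r(x)=0$ forces $r(\Phi_t(x))=0$. This is exactly where the covariance $F\circ\Phi_t=M_tF$ is used beyond the estimate $\|M_t\|\to1$.
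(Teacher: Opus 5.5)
Your proof is correct and follows the paper's argument: you verify \eqref{eq:standing}, obtain (D1) from the operator-norm bound $\|M_t\|\le1+\sup_{\clOm}\|M_t-I\|$ together with uniform continuity of $H$ on $\clOm$, and use $\log\|F\|$ (or the sum of the $\log\|F^{(\ell)}\|$) as the complete pluripolar potential for Corollary~\ref{cor:completepolar}. The differences are cosmetic. You go through the one-sided radius criterion of Lemma~\ref{lem:radiuscriterion}, whereas the paper records the two-sided bound $(1-\eta_t)\|F\|\le\|F\circ\Phi_t\|\le(1+\eta_t)\|F\|$. Also, your displayed inequality already covers the zero fibres, so the separate check you flag as the delicate step is automatic.
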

	\begin{proof}
		The function $\log\|F\|$ is plurisubharmonic, so $\ph$ satisfies
		\eqref{eq:standing}. The singular set is exactly $\{F=0\}\cap\clOm$.
		Write $\eta_t\coloneqq\sup_{\clOm}\|M_t-I\|$. For small $t$, $\eta_t<1/2$ and
		\[
		(1-\eta_t)\|F(x)\|\le\|F(\Phi_t(x))\|
		\le(1+\eta_t)\|F(x)\|.
		\]
		In particular, the common zero set is preserved in both directions wherever
		this identity is used. At nonzero fibres,
		\[
		\left|\log\frac{\|F(\Phi_t(x))\|}{\|F(x)\|}\right|
		\le \max\{-\log(1-\eta_t),\log(1+\eta_t)\}\longrightarrow0.
		\]
		On the overlap $x\in\Om$, $\Phi_t(x)\in\clOm$, uniform continuity of $H$
		gives $H(x)-H(\Phi_t(x))\to0$ uniformly. Thus (D1) follows, with the
		zero-fibre convention included. Moreover,
		$\{F=0\}=\{\log\|F\|=-\infty\}$ is complete pluripolar on the
		connected neighbourhood under consideration. Hence Theorem~\ref{thm:continuity}
		and Corollary~\ref{cor:completepolar} give the stated continuity and
		approximation conclusions. For
		finitely many tuples, sum the estimates with their positive coefficients.
		The union of the corresponding zero sets is again contained in a complete
		pluripolar set, for instance the $-\infty$ locus of the sum of the functions
		$\log\|F^{(\ell)}\|$, and the total error still tends to zero. This proves
		the stated extension.
	\end{proof}
	
	Weighted homogeneous tuples give a simple class of examples. If
	$F_\ell(\Phi_t(x))=e^{d_\ell t}F_\ell(x)$, then the diagonal matrix
	$M_t\coloneqq\operatorname{diag}(e^{d_1t},\ldots,e^{d_qt})$ tends to the identity.
	
	\begin{remark}
		Equation~\eqref{eq:matrixcovariance} is a sufficient hypothesis, not a
		consequence of mere invariance of the common zero set. The distinction is
		already visible for $F(z_1,z_2)\coloneqq(z_1^2,z_2)$ and
		$\Phi_t(z_1,z_2)\coloneqq(z_1,z_2+t z_1)$. The common zero set is $\{0\}$ and is
		preserved. But, for a fixed nonzero $t$, the term $t z_1$ is not in the
		ideal $(z_1^2,z_2)$, so the second transformed component cannot be expressed
		as a holomorphic linear combination of the original components near $0$.
		Also, along $(z_1,0)$, the quotient
		$\|F(\Phi_t(z_1,0))\|/\|F(z_1,0)\|$ is unbounded as $z_1\to0$.
		In the reduced principal-divisor case, Lemma~\ref{lem:divisor} supplies
		the required unit factor.
	\end{remark}
	
	\section*{Acknowledgements}
\noindent 	
	The second named author initiated this work shortly after the passing of Professor
	Nguyen Thanh Van (1943--2015), whose guidance had a lasting influence on his
	mathematical work. The project was completed in August 2026, after a long period
	shaped by both technical difficulties and personal circumstances, and only a few
	months after the passing of his father, Professor Nguyen Van Khue (1940-2026). The authors
	dedicate this paper to the memory of Professors Nguyen Thanh Van and Nguyen Van
	Khue, whose lives and work remain enduring sources of inspiration.
	\begingroup
	\small
	\setlength{\parskip}{0pt}
	
	\endgroup
	
\end{document}